
\documentclass[11pt]{amsart}
\usepackage{amsopn}
\usepackage{amssymb, amscd}
\usepackage{multirow}

\topmargin 0cm
\evensidemargin 0.5cm
\oddsidemargin 0.5cm
\textwidth 15cm \textheight 23cm

\newcommand{\nc}{\newcommand}

\nc{\fg}{\mathfrak{f} } \nc{\vg}{\mathfrak{v} } \nc{\wg}{\mathfrak{w} }
\nc{\zg}{\mathfrak{z} } \nc{\ngo}{\mathfrak{n} } \nc{\kg}{\mathfrak{k} }
\nc{\mg}{\mathfrak{m} } \nc{\bg}{\mathfrak{b} } \nc{\ggo}{\mathfrak{g} }
\nc{\ggob}{\overline{\mathfrak{g}} } \nc{\sog}{\mathfrak{so} }
\nc{\sug}{\mathfrak{su} } \nc{\spg}{\mathfrak{sp} } \nc{\slg}{\mathfrak{sl} }
\nc{\glg}{\mathfrak{gl} } \nc{\cg}{\mathfrak{c} } \nc{\rg}{\mathfrak{r} }
\nc{\hg}{\mathfrak{h} } \nc{\tg}{\mathfrak{t} } \nc{\ug}{\mathfrak{u} }
\nc{\dg}{\mathfrak{d} } \nc{\ag}{\mathfrak{a} } \nc{\pg}{\mathfrak{p} }
\nc{\sg}{\mathfrak{s} } \nc{\affg}{\mathfrak{aff} }

\nc{\pca}{\mathcal{P}} \nc{\nca}{\mathcal{N}} \nc{\lca}{\mathcal{L}}
\nc{\oca}{\mathcal{O}} \nc{\mca}{\mathcal{M}} \nc{\tca}{\mathcal{T}}
\nc{\aca}{\mathcal{A}} \nc{\cca}{\mathcal{C}} \nc{\gca}{\mathcal{G}}
\nc{\sca}{\mathcal{S}} \nc{\hca}{\mathcal{H}} \nc{\bca}{\mathcal{B}}
\nc{\dca}{\mathcal{D}} \nc{\val}{\operatorname{val}}

\nc{\vp}{\varphi} \nc{\ddt}{\frac{d}{dt}} \nc{\dds}{\frac{d}{ds}}
\nc{\dpar}{\frac{\partial}{\partial t}} \nc{\im}{\mathtt{i}}

\nc{\SO}{\mathrm{SO}} \nc{\Spe}{\mathrm{Sp}} \nc{\Sl}{\mathrm{SL}}
\nc{\SU}{\mathrm{SU}} \nc{\Or}{\mathrm{O}} \nc{\U}{\mathrm{U}} \nc{\Gl}{\mathrm{GL}}
\nc{\Se}{\mathrm{S}} \nc{\Cl}{\mathrm{Cl}} \nc{\Spein}{\mathrm{Spin}}
\nc{\Pin}{\mathrm{Pin}} \nc{\G}{\mathrm{GL}_n(\RR)} \nc{\g}{\mathfrak{gl}_n(\RR)}

\nc{\RR}{{\Bbb R}} \nc{\HH}{{\Bbb H}} \nc{\CC}{{\Bbb C}} \nc{\ZZ}{{\Bbb Z}}
\nc{\FF}{{\Bbb F}} \nc{\NN}{{\Bbb N}} \nc{\QQ}{{\Bbb Q}} \nc{\PP}{{\Bbb P}}

\nc{\vs}{\vspace{.2cm}} \nc{\vsp}{\vspace{1cm}} \nc{\ip}{\langle\cdot,\cdot\rangle}
\nc{\ipp}{(\cdot,\cdot)} \nc{\la}{\langle} \nc{\ra}{\rangle} \nc{\unm}{\tfrac{1}{2}}
\nc{\unc}{\tfrac{1}{4}} \nc{\und}{\tfrac{1}{16}} \nc{\no}{\vs\noindent}
\nc{\lam}{\Lambda^2(\RR^n)^*\otimes\RR^n} \nc{\tangz}{{\rm T}^{\rm Zar}}
\nc{\nor}{{\sf n}}  \nc{\mum}{/\!\!/} \nc{\kir}{/\!\!/\!\!/}
\nc{\Ri}{\tfrac{4\Ric_{\mu}}{||\mu||^2}} \nc{\ds}{\displaystyle}
\nc{\ben}{\begin{enumerate}} \nc{\een}{\end{enumerate}} \nc{\f}{\frac}
\nc{\lb}{[\cdot,\cdot]} \nc{\isn}{\tfrac{1}{||v||^2}}
\nc{\gkp}{(\ggo=\kg\oplus\pg,\ip)} \nc{\ukh}{(\ug=\kg\oplus\hg,\ip)}
\nc{\tgkp}{(\tilde{\ggo}=\kg\oplus\pg,\ip)}
\nc{\wt}{\widetilde} \nc{\mm}{M}

\nc{\Hess}{\operatorname{Hess}} \nc{\ad}{\operatorname{ad}}
\nc{\Ad}{\operatorname{Ad}} \nc{\rank}{\operatorname{rank}}
\nc{\Irr}{\operatorname{Irr}} \nc{\End}{\operatorname{End}}
\nc{\Aut}{\operatorname{Aut}} \nc{\Inn}{\operatorname{Inn}}
\nc{\Der}{\operatorname{Der}} \nc{\Ker}{\operatorname{Ker}}
\nc{\Iso}{\operatorname{I}} \nc{\Diff}{\operatorname{Diff}}
\nc{\Lie}{\operatorname{L}} \nc{\tr}{\operatorname{tr}} \nc{\dif}{\operatorname{d}}
\nc{\sen}{\operatorname{sen}} \nc{\modu}{\operatorname{mod}}
\nc{\CRic}{\operatorname{PP}} \nc{\Cric}{\operatorname{P}} \nc{\Ricci}{\operatorname{Ric}}
\nc{\sym}{\operatorname{sym}} \nc{\symac}{\operatorname{sym^{ac}}}
\nc{\symc}{\operatorname{sym^{c}}} \nc{\scalar}{\operatorname{sc}}
\nc{\grad}{\operatorname{grad}} \nc{\ricci}{\operatorname{Rc}}
\nc{\Nor}{\operatorname{Norm}}  \nc{\ricc}{\operatorname{Rc^{c}}}
\nc{\Ricc}{\operatorname{Ric^{c}}} \nc{\ricac}{\operatorname{Rc^{ac}}}
\nc{\Ricac}{\operatorname{Ric^{ac}}} \nc{\Riem}{\operatorname{Rm}}
\nc{\riccig}{\operatorname{ric^{\gamma}}} \nc{\Rin}{\operatorname{M}}
\nc{\Le}{\operatorname{L}} \nc{\tang}{\operatorname{T}}
\nc{\level}{\operatorname{level}} \nc{\rad}{\operatorname{r}}
\nc{\abel}{\operatorname{ab}} \nc{\CH}{\operatorname{CH}}
\nc{\mcc}{\operatorname{mcc}} \nc{\Adj}{\operatorname{Adj}}
\nc{\Order}{\operatorname{O}}  \nc{\inj}{\operatorname{inj}} \nc{\proy}{\operatorname{pr}}
\nc{\vol}{\operatorname{vol}} \nc{\Diag}{\operatorname{Diag}}
\nc{\Spec}{\operatorname{Spec}}

\theoremstyle{plain}
\newtheorem{theorem}{Theorem}[section]
\newtheorem{proposition}[theorem]{Proposition}
\newtheorem{corollary}[theorem]{Corollary}
\newtheorem{lemma}[theorem]{Lemma}

\theoremstyle{definition}

\theoremstyle{remark}
\newtheorem{remark}[theorem]{Remark}

\newtheorem{example}[theorem]{Example}

\title[]{On the symplectic curvature flow for locally homogeneous manifolds}

\author{Jorge Lauret} \author{Cynthia Will}

\address{Universidad Nacional de C\'ordoba, FaMAF and CIEM, 5000 C\'ordoba, Argentina}
\email{lauret@famaf.unc.edu.ar} \email{cwill@famaf.unc.edu.ar}

\thanks{This research was partially supported by grants from CONICET, FONCYT and SeCyT (Universidad Nacional de C\'ordoba)}

\dedicatory{Dedicated to the memory of our dear friend Sergio Console.}

\begin{document}

\maketitle

\begin{abstract}
Recently, J. Streets and G. Tian introduced a natural way to evolve an almost-K\"ahler manifold called the symplectic curvature flow, in which the metric, the symplectic structure and the almost-complex structure are all evolving.  We study in this paper different aspects of the flow on locally homogeneous manifolds, including long-time existence, solitons, regularity and convergence.  We develop in detail two large classes of Lie groups, which are relatively simple from a structural point of view but yet geometrically rich and exotic: solvable Lie groups with a codimension one abelian normal subgroup and a construction attached to each left symmetric algebra.  As an application, we exhibit a soliton structure on most of symplectic surfaces which are Lie groups.  A family of ancient solutions which develop a finite time singularity was found; neither their Chern scalar nor their scalar curvature are monotone along the flow and they converge in the pointed sense to a (non-K\"ahler) shrinking soliton solution on the same Lie group.
\end{abstract}


\section{Introduction}\label{intro}

There is a natural way to evolve an almost-K\"ahler manifold $(M,\omega,g,J)$ which has recently been introduced by J. Streets and G. Tian in \cite{StrTn2} and is called the {\it symplectic curvature flow} (or SCF for short):
\begin{equation}\label{SCFeq-intro}
\left\{\begin{array}{l}
\dpar\omega=-2p, \\ \\
\dpar g=-2p^{1,1}(\cdot,J\cdot)-2\ricci^{2,0+0,2},
\end{array}\right.
\end{equation}
where $p$ is the Chern-Ricci form of $(\omega,g)$ and $\ricci$ is the Ricci tensor of $g$.  The equation for the symplectic form is in direct analogy with K\"ahler-Ricci flow, the term $-2p^{1,1}(\cdot,J\cdot)$ in the equation for the metric guarantees that compatibility is preserved and the term $-2\ricci^{2,0+0,2}$, being geometrically very natural, yields to the (weak) parabolicity of the flow.  The evolution of $J$ follows from the formula $\omega=g(J\cdot,\cdot)$ (see \eqref{evJ}).  Unlike the anti-complexified Ricci flow (see \cite{LeWng}), where $\omega$ remains fixed in time, and unlike some flows for hermitian manifolds studied in the literature like hermitian curvature flow (see \cite{StrTn1}), pluriclosed flow (see \cite{StrTn3}) or Chern-Ricci flow (see \cite{TstWnk}), in which $J$ is fixed along the flow, in SCF the three structures are indeed evolving.  This certainly makes very difficult the study of any basic property of the flow.  A flow unifying SCF and pluriclosed flow is studied in \cite{Dai} and a result on stability of K\"ahler-Einstein structures is given in \cite{Smt}.

Our aim in this paper is to explore some aspects of the SCF on the class of locally homogeneous almost-K\"ahler manifolds, in order to exemplify and provide some evidence for eventual conjectures in the general case (we refer to \cite{Pk,SCF,Frn} for further work on homogeneous SCF).   More precisely, we are interested in the SCF evolution of compact almost-K\"ahler manifolds $(M,\omega,g)$ whose universal cover is a Lie group $G$ and such that if $\pi:G\longrightarrow M$ is the covering map, then $\pi^*\omega$ and $\pi^*g$ are left-invariant (e.g.\ invariant structures on solvmanifolds and nilmanifolds).  A solution on $M$ is therefore obtained by pulling down the corresponding solution on the Lie group $G$, which by diffeomorphism invariance stays left-invariant and so equation \eqref{SCFeq-intro} becomes an ODE for a compatible pair $(\omega(t),g(t))$, where $\omega(t)$ is a closed non-degenerate $2$-form on the Lie algebra $\ggo$ of $G$ and $g(t)$ is an inner product on $\ggo$ for all $t$.  Notice that short-time existence (forward and backward) and uniqueness of the solutions are therefore guaranteed, say on a maximal interval of time $(T_-,T_+)$ containing $0$, $T_\pm\in\RR\cup\{\pm\infty\}$.  We therefore study, more in general, left-invariant solutions on Lie groups which may or may not admit a cocompact discrete subgroup.

An almost-K\"ahler manifold $(M,\omega,g)$ will flow self-similarly along the SCF, in the sense that
$$
(\omega(t),g(t))=(c_t\vp(t)^*\omega,c_t\vp(t)^*g), \qquad \mbox{for some}\; c_t>0, \quad \vp(t)\in\Diff(M),
$$
if and only if the Chern-Ricci form and Ricci tensor of $(\omega,g)$ satisfy
$$
\left\{
\begin{array}{l}
p=c\omega+\lca_{X}\omega, \\ \\
p^{1,1}(\cdot,J\cdot)+\ricci^{2,0+0,2}=cg+\lca_{X}g,
\end{array}\right. \qquad \mbox{for some}\; c\in\RR, \quad X\in\chi(M)\; \mbox{(complete)}.
$$
In analogy to the terminology used in Ricci flow theory, we call such structure $(\omega,g)$ a
{\it SCF-soliton} and we say it is {\it expanding}, {\it steady} or {\it shrinking}, if $c<0$, $c=0$ or $c>0$, respectively.  The following natural open questions were our main motivation: Does every symplectic Lie group $(G,\omega)$ admit a compatible metric $g$ such that $(\omega,g)$ is a SCF-soliton?  Is a SCF-soliton structure unique up to equivalence and scaling?  Are all nonflat SCF-solitons on Lie groups steady or expanding?  Note that the last question is related to the long-time existence of SCF solutions.

The following two large classes of Lie groups have been studied in detail.  We believe that some of the results obtained in the present paper might also be useful in other problems on almost-K\"ahler geometry, specially those involving Chern-Ricci or Ricci curvature.

\subsection{Almost abelian solvmanifolds}\label{aa-intro}
In Section \ref{muA-sec}, we attach to each $(2n-1)\times(2n-1)$-matrix of the form
$$
A=\left[\begin{array}{c|c}
a&v\\\hline &\\
0& \quad A_1 \quad \\ &\\
\end{array}\right],
\qquad a\geq 0, \quad v\in\RR^{2n-2}, \quad A_1\in\spg(n-1,\RR),
$$
a left-invariant almost-K\"ahler structure on a $2n$-dimensional Lie group denoted by $G_A$.  The Lie algebra of $G_A$ has an orthonormal basis $\{ e_1,\dots,e_{2n}\}$ such that $\ngo:=\la e_1,\dots,e_{2n-1}\ra$ is an abelian ideal, $\ad{e_{2n}}|_{\ngo}=A$, and the fixed symplectic form and almost-complex structures are respectively given by
$$
\omega=e^1\wedge e^{2n}+\omega_1, \qquad
J=\left[\begin{array}{c|c|c} 0&0&-1 \\\hline &&\\ 0&\quad J_1\quad &0 \\ &&\\ \hline 1&0&0 \end{array}\right],
$$
where $\{ e^i\}$ denotes the dual basis and $\omega_1=g(J_1\cdot,\cdot)$ is the nondegenerate $2$-form on $\ngo_1:=\la e_2,\dots,e_{2n-1}\ra$ used to define the Lie algebra $\spg(n-1,\RR)$ above.  Any almost-K\"ahler Lie group with a codimension-one abelian normal subgroup is equivalent to $(G_A,\omega,g)$ for some matrix $A$ as above.  The structure is K\"ahler if and only if $v=0$ and $A_1\in\sug(n-1)$.

After giving some criteria for the equivalence between these structures, we compute their Chern-Ricci and Ricci tensors in terms of $A$, which is actually the only datum that is varying in this construction.  We then study the existence, uniqueness and structure of solitons among this class, which turn out to be all expanding if nonflat.

\begin{theorem}\label{SA-sol-intro}
Assume that $v=0$.
\begin{itemize}
\item[(i)] If $A$ is either semisimple or nilpotent, then the symplectic Lie group $(G_A,\omega)$ admits a compatible metric $g$ such that the almost-K\"ahler structure $(\omega,g)$ is a SCF-soliton.  The condition for $(G_A,\omega,g)$ being a SCF-soliton is respectively given by $A$ normal and $$[A,[A,A^t]]=-\left(|[A,A^t]|^2/|A|^2\right)A.$$

\item[(ii)] If $A$ is neither nilpotent nor semisimple, then the Lie group $G_A$ does not admit any (algebraic) SCF-soliton.

\item[(iii)] The SCF evolution is equivalent to the ODE for $A=A(t)$ given by
$$
A'=  -\unm((\tr{A})^2+\tr{S(A)^2})A+\unm[A,[A,A^t]]-\tfrac{\tr{A}}{2}[A,A^t].
$$
\item[(iv)] Any solution $(\omega(t),g(t)$ is immortal (i.e.\ $T_+=\infty$) in this class.

\item[(v)] The quantity $|[A,A^t]|^2/|A|^4$ is strictly decreasing along the flow, unless the solution is a SCF-soliton.

\item[(vi)] Any accumulation point $A_+$ of the set $\{ A(t)/|A(t)|:t\in[0,\infty)\}$ gives rise to a limit soliton $(G_{A_+},\omega,g)$.  If $A_0$ is not nilpotent, then $A_+$ is a normal matrix having the same eigenvalues as $A_0$ up to scaling.
\end{itemize}
\end{theorem}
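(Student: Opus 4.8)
The plan is to reduce the whole theorem to the matrix ODE in part~(iii), after which the remaining assertions follow by comparatively short arguments. For~(iii): in this construction the compatible triple $(\omega,g,J)$ on $\ggo$ is fixed once and for all, and only the Lie bracket---which is entirely encoded by $A$---varies, so the bracket flow formalism applies directly. The SCF starting at $(G_A,\omega,g)$ is equivalent, up to a time-dependent scaling and a curve of automorphisms, to the bracket flow on the variety of Lie brackets compatible with $(\omega,g,J)$, which moreover remains inside the almost-abelian locus $v=0$. Substituting the formulas for the Chern--Ricci form and the Ricci tensor of $(G_A,\omega,g)$ in terms of $A$ obtained earlier in this section into the bracket flow equation then produces the stated ODE. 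The point we use throughout is that its right-hand side equals
$$
\sigma(A)\,A+[A,N(A)],\qquad \sigma(A)=-\unm\bigl((\tr A)^2+\tr S(A)^2\bigr),\quad N(A)=\unm[A,A^t]-\tfrac{\tr A}{2}A^t;
$$
since $[A,N(A)]$ integrates to a conjugation, one checks that every solution is of the form $A(t)=c(t)\,k(t)A_0k(t)^{-1}$ with $c(t)>0$ and $k(t)\in\Gl_{2n-1}(\RR)$, so the flow acts by scaling and conjugation and in particular keeps the eigenvalues of $A$ fixed up to scaling.

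From $\langle[A,N(A)],A\rangle=-\tr\bigl(N(A)[A,A^t]\bigr)=-\unm|[A,A^t]|^2$ (using $\langle[A,A^t],A\rangle=0$, which holds because $\tr(A[A,A^t])=0$) one obtains $\tfrac{d}{dt}|A|^2=2\sigma(A)|A|^2-|[A,A^t]|^2\le 0$, so $|A(t)|$ is nonincreasing, no solution blows up in finite time, and $T_+=\infty$; this is~(iv). For~(v), set $\phi(A)=|[A,A^t]|^2/|A|^4$. Using $\grad|[A,A^t]|^2=-4[A,[A,A^t]]$, the identities $\langle A,[A,A^t]\rangle=0$, $\langle A,[A,[A,A^t]]\rangle=-|[A,A^t]|^2$, $\langle[A,[A,A^t]],[A,A^t]\rangle=0$, and the scale-invariance $\grad\phi\perp A$, a direct computation along the flow gives
$$
\phi'=-\tfrac{2}{|A|^6}\Bigl(|A|^2\,|[A,[A,A^t]]|^2-|[A,A^t]|^4\Bigr)\le 0,
$$
the inequality being Cauchy--Schwarz applied to $|[A,A^t]|^2=\langle[[A,A^t],A],A\rangle\le|[[A,A^t],A]|\,|A|$. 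Equality holds exactly when $[A,[A,A^t]]=-\bigl(|[A,A^t]|^2/|A|^2\bigr)A$; this condition is preserved by the flow and, by the analysis in~(i)--(ii) below, forces $A'\parallel A$, so $\phi$ is strictly decreasing unless the solution is a SCF-soliton.

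Combining the above with the fact that an algebraic SCF-soliton evolves on the bracket side by scaling and orthogonal conjugation---hence $\phi$ is constant along it---$(G_A,\omega,g)$ is a SCF-soliton exactly when $[A,[A,A^t]]=-\bigl(|[A,A^t]|^2/|A|^2\bigr)A$ and $(\tr A)[A,A^t]\in\RR A$; since $[A,A^t]$ is symmetric and traceless, the latter forces $\tr A=0$ or $[A,A^t]=0$. If $A$ is semisimple, then after replacing $g$ by a suitable compatible metric---i.e.\ $A$ by an admissible conjugate---we may assume $A$ normal, and both conditions hold trivially; conversely they give $[A,A^t]=0$. If $A$ is nilpotent, then $\tr A=0$ and the condition is the Euler--Lagrange equation of $\phi$ restricted to the admissible orbit $\oca$ of $A$; minimizing $\phi$ over $\oca$ (it descends to the compact set $\overline{\oca}\cap\{|A|=1\}$) produces the desired $A$, proving~(i). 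For~(ii), if $A$ is neither semisimple nor nilpotent and $(G_A,\omega)$ admitted an algebraic SCF-soliton, then some $A'$ in the equivalence class of $A$---still of neither type---would satisfy the soliton condition. Since $[A',A'^t]=0$ would make $A'$ normal, hence semisimple, we have $[A',A'^t]\ne 0$ and $\tr A'=0$, so $[A',[A',A'^t]]\in\RR A'$ shows $A'\in\operatorname{Im}\ad_{A'}$; decomposing along the $\ad_{A'_{\mathrm s}}$-weight spaces of the additive Jordan decomposition $A'=A'_{\mathrm s}+A'_{\mathrm n}$ gives $A'\in\operatorname{Im}\ad_{A'_{\mathrm n}}\subseteq[\zg(A'_{\mathrm s}),\zg(A'_{\mathrm s})]$, and comparing components---$A'_{\mathrm s}$ lies in the center of the reductive algebra $\zg(A'_{\mathrm s})$, $A'_{\mathrm n}$ in its derived subalgebra---forces $A'_{\mathrm s}=0$, a contradiction.

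For~(vi), by~(iv) the curve $A(t)/|A(t)|$ is defined for all $t\ge 0$ on the unit sphere, and the lower bound $|A(t)|^2\ge|A_0|^2/(1+C|A_0|^2 t)$ (coming from $\tfrac{d}{dt}|A|^2\ge -C|A|^4$) gives $\int_0^\infty|A(t)|^2\,dt=\infty$; hence, after the time change $d\tau=|A|^2\,dt$, the projectivized flow runs for infinite $\tau$-time on the sphere with $\phi$ as a strict Lyapunov function, so its set of subsequential limits as $\tau\to\infty$---which contains every accumulation point $A_+$ of $\{A(t)/|A(t)|\}$---consists of soliton-condition points; thus $(G_{A_+},\omega,g)$ is a SCF-soliton by~(i)--(ii). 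If $A_0$ is not nilpotent then $\sum_i|\lambda_i(A_0)|^2>0$, and from $A(t)=c(t)k(t)A_0k(t)^{-1}$ one has $|A(t)|^2\ge c(t)^2\sum_i|\lambda_i(A_0)|^2$; since $c(t)\le c(0)$ and $\tfrac{d}{dt}\log(c^2/|A|^2)=|[A,A^t]|^2/|A|^2\ge 0$, the ratio $c(t)/|A(t)|$ converges to a positive limit, so $A_+$ has the eigenvalues of $A_0$ up to positive scaling---in particular $A_+$ is not nilpotent, and the dichotomy above then forces $[A_+,A_+^t]=0$, i.e.\ $A_+$ is normal. The two genuinely substantive points are the derivation of the ODE in~(iii), i.e.\ the Chern--Ricci and Ricci computations carried out earlier in the section, and the existence claim in~(i) for nilpotent $A$: showing that the infimum of $\phi$ over $\oca$ is attained \emph{within} $\oca$ itself, rather than at a degeneration to a solvmanifold of strictly smaller dimension, is the delicate step and the one that must exploit the specific structure of these almost-abelian Lie algebras.
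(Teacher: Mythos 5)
Your reduction to the matrix ODE and the computations for (iii)--(vi) are essentially the paper's route: the paper derives the same ODE from the bracket flow (equation \eqref{BFv02}) and then observes that it differs from Arroyo's Ricci-flow bracket flow \cite{Arr} only in the scalar coefficient of $A$, so that (iv)--(vi) follow from \emph{identical} proofs; your explicit computations of $\ddt|A|^2$, of $\phi'$ via Cauchy--Schwarz, and of the eigenvalue/normality statement in (vi) via the monotonicity of $c^2/|A|^2$ are correct and reproduce what is being imported. Where you genuinely diverge is in the characterization of solitons: the paper computes directly when $P+\Ricac-cI$ is a derivation of $\mu_A$ (Theorem \ref{SA-sol}), treating the non-nilpotent case (where derivations must preserve the nilradical $\ngo$) and the nilpotent case separately, whereas you characterize algebraic solitons as fixed points of the normalized bracket flow, i.e.\ $A'\in\RR A$, and then split this into your two conditions by pairing against $N=[A,A^t]$. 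That equivalence is available (it is stated in Section \ref{sec-self} that algebraic solitons are precisely the fixed points of the normalized bracket flow, since $\delta_\mu(Q)\in\RR\mu$ iff $Q\in\RR I+\Der$), so this is a legitimate and arguably slicker route to the same dichotomy, as is your Jordan-decomposition argument for (ii) in place of the paper's one-line ``normal $\Rightarrow$ semisimple''.

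The genuine gap is the existence claim in (i) for nilpotent $A$. You propose to minimize $\phi=|[A,A^t]|^2/|A|^4$ over $\overline{\oca}\cap\{|A|=1\}$ and extract a critical point, but the minimum over the \emph{closure} of a projectivized nilpotent orbit is typically attained on the boundary strata (degenerations to smaller nilpotent orbits, on which $\phi$ can be strictly smaller), so compactness alone does not produce a critical point inside $\oca$. You flag this yourself as ``the delicate step'' but do not supply the argument; this is exactly the point at which the paper invokes \cite[Theorem 4.2]{finding} (every nilpotent $\Gl$-conjugacy class contains a critical point of $|m|^2$, explicitly described by its Jordan blocks) rather than a naive minimization. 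Moreover, there is a second issue your sketch does not address at all: the admissible conjugations are by $\Spe(\omega_1)\simeq\Spe(n-1,\RR)$ only (these parametrize the compatible metrics), so a priori you can only hope for a critical point of $\phi$ restricted to the $\Spe(n-1,\RR)$-orbit inside $\spg(n-1,\RR)$, whose Euler--Lagrange equation is the vanishing of the projection of $[A_1,[A_1,A_1^t]]+cA_1$ onto $[\spg(n-1,\RR),A_1]$ --- weaker than the full condition \eqref{nilsol}. The paper closes both gaps with Jablonski's results \cite[Theorem 3.1 and Corollary 3.4]{Jbl2}: the intersection of the $\Gl_{2n-2}(\RR)$-class with $\spg(n-1,\RR)$ is a finite union of $\Spe(n-1,\RR)$-orbits, each containing a critical point of the $\Spe$-moment map, which \emph{coincides} with the $\Gl$-moment map on $\spg(n-1,\RR)$. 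Without some substitute for these two facts, your proof of the existence half of (i) in the nilpotent case is incomplete.
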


Exactly five $4$-dimensional symplectic Lie groups admit a lattice, giving rise to the compact symplectic surfaces which are solvmanifolds.  They all admit a SCF-soliton, and since they all have a codimension one abelian normal subgroup, we use the results obtained for almost abelian solvmanifolds described above to study their SCF evolution in Section \ref{surfaces}, including the convergence behavior.

\subsection{LSA construction}\label{lsa-intro}
In order to search for SCF-solitons beyond the solvable case, we considered in Section \ref{weak} a construction attaching to each $n$-dimensional left-symmetric algebra (LSA for short; see \eqref{LSA-def}) an almost-K\"ahler structure on a $2n$-dimensional Lie group (see \cite{Bym,AndSlm,Ovn} for further information on this construction).

We fix an euclidean symplectic vector space  $(\ggo\oplus\ggo,\omega,g,J)$, where $g$ is an inner product making the two copies of the vector space $\ggo$ orthogonal and $J=\left[\begin{smallmatrix} 0&I\\ -I&0\end{smallmatrix}\right]$.  Now for each LSA structure on $\ggo$, define the Lie algebra $\ggo\ltimes_\theta\ggo$ with Lie bracket
$$
[(X,Y),(Z,W)]:=\left( [X,Z]_\ggo, \theta(X)W-\theta(Z)Y \right),
$$
where $[X,Y]_\ggo:=X\cdot Y-Y\cdot X$ is the corresponding Lie bracket on $\ggo$ and $\theta(X):=-L(X)^t$.  Here $L(X)$ denotes LSA left-multiplication by $X\in\ggo$.  The almost-K\"ahler Lie algebra $(\ggo\ltimes_\theta\ggo,\omega,g)$ is therefore completely determined by the LSA structure.  We first prove some criteria on the equivalence between these structures and then compute their Chern-Ricci and Ricci curvature in terms of $\theta$, which is the only datum varying here.

The SCF on this class is equivalent to the ODE for $\theta=\theta(t)$ given by
$$
\theta'(X)=\theta((P_1+S)X) +[\theta(X),P_1^t-S], \qquad\forall X\in\ggo,
$$
where $P=\left[\begin{smallmatrix} P_1&0\\ 0&P_1^t\end{smallmatrix}\right]$ and $\Ricac=\left[\begin{smallmatrix} S&0\\ 0&-S\end{smallmatrix}\right]$ are respectively the Chern-Ricci operator (i.e. $p=\omega(P\cdot,\cdot)$) and the anti-J-invariant part of the Ricci operator $\Ricci$ (i.e. $\Ricac=\unm(\Ricci+J\Ricci J)$).

\begin{theorem}\label{LSA-intro}
Let $G$ denote the $8$-dimensional Lie group with Lie algebra defined as above for the Lie algebra $\ggo=\ug(2)$ with LSA structure coming from the identification $\ggo=\HH$ with the quaternion numbers.
\begin{itemize}
\item[(i)] There is a family of {\it ancient} solutions on $G$ (i.e. $T_-=-\infty$).  Each one of them develops a finite time singularity $T_+<\infty$ (see Example \ref{BF-exa}).

\item[(ii)] The Chern scalar curvature $\tr{p}$ of any of the solutions in part (i) is always positive and, as $t\to T_+$, $\tr{p}\to\infty$ after attaining a global minimum.  The scalar curvature $R$ is always negative, attains a global maximum and $R\to-\infty$, as $t\to T_+$.  In particular, neither $\tr{p}$ nor $R$ are monotone along the flow.

\item[(iii)] $G$ admits a non-K\"ahler shrinking SCF-soliton $(\omega,g)$ (see Example \ref{u(2)}) with Chern-Ricci form, Ricci operator and scalar curvature given respectively by
$$
p=20\omega, \quad \Ricci=\Diag(-100,92,92,92,-244,-52,-52,-52), \quad R=-224.
$$
\item[(iv)] Each ancient solution from part (i) converges in the pointed sense to the shrinking SCF-soliton structure in (iii), and backward, they converge to expanding SCF-solitons on certain solvable Lie groups.
\end{itemize}
\end{theorem}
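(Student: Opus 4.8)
The plan is to run the bracket-flow reformulation already recorded above: since the SCF on this class is equivalent to the ODE
\[
\theta'(X)=\theta((P_1+S)X)+[\theta(X),P_1^t-S],\qquad X\in\ggo,
\]
and both the Chern-Ricci operator $P=\left[\begin{smallmatrix}P_1&0\\0&P_1^t\end{smallmatrix}\right]$ and the anti-$J$-invariant Ricci part $\Ricac=\left[\begin{smallmatrix}S&0\\0&-S\end{smallmatrix}\right]$ are algebraic functions of $\theta$, it suffices to understand this ODE on the orbit of the given initial datum. First I would make the initial data explicit: with quaternion basis $\{1,i,j,k\}$ one has $[X,Y]_\ggo=XY-YX$, so $1$ spans the center and $\langle i,j,k\rangle=\mathfrak{su}(2)$, while $L(X)$ is left quaternion multiplication, and $\ggo\ltimes_\theta\ggo=\ug(2)\ltimes\RR^4$ with $\RR^4$ abelian. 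Quaternionic conjugation $x\mapsto qxq^{-1}$ by unit quaternions is an orthogonal automorphism of the LSA, hence $(\phi,\phi)$ induces an $\SO(3)\subset\Aut(\ggo\ltimes_\theta\ggo)$ fixing $\omega$, $g$ and $J$; by naturality of SCF this symmetry is preserved along the flow, so $\theta(t)$ stays $\SO(3)$-equivariant and the ODE descends to a system in a small number of real parameters — essentially the scalings distinguishing the trivial summands $\RR\oplus\RR$ from the adjoint summands $\Im\HH\oplus\Im\HH$, the remaining entries of $\theta$ being forced. This reduction is routine bookkeeping.

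Next I would compute $\Ricci$, $P$ and $S$ for the reduced family from the curvature formulas for the LSA construction established earlier in the paper, and substitute to obtain an explicit autonomous system in the chosen parameters. Its fixed points up to scaling are precisely the algebraic SCF-solitons on $G$ lying in the symmetric family; I expect a single nonflat one, which I would identify with part (iii) by plugging the fixed-point data into the formulas for $p$, $\Ricci$ and $R=\tr\Ricci$, obtaining $p=20\omega$, $\Ricci=\Diag(-100,92,92,92,-244,-52,-52,-52)$ and $R=-224$. Here $c=20>0$, so the soliton is shrinking, and since $J$ interchanges the two $\ggo$-copies while $J\Ricci J\neq\Ricci$ (the block $(-100,92,92,92)$ differs from $(-244,-52,-52,-52)$), we get $\Ricac\neq0$; hence the structure is almost-Kähler but not Kähler, proving (iii).

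For (i) and (ii) I would analyze the low-dimensional phase portrait of the reduced system. As in Theorem~\ref{SA-sol-intro}(v), the idea is to exhibit a normalized quantity that is strictly monotone off the soliton and use it to organize the solutions into a one-parameter family of curves: in backward time each such curve runs out of the (scaling line of the) soliton's hyperbolic set into an \emph{expanding}-soliton fixed point supported on a degenerate, solvable limit bracket, so the backward solution is defined on all of $(-\infty,0]$ and the solutions are ancient ($T_-=-\infty$); in forward time one reads off from the ODE a differential inequality $\ddt|\theta(t)|^2\ge\varepsilon|\theta(t)|^4$ near the shrinking end, forcing $T_+<\infty$. Evaluating $\tr p$ and $R$ along these curves as explicit functions of the flow parameters, $\tr p$ comes out as a positive combination of squares, so $\tr p>0$ throughout, with $\tr p\to+\infty$ as $t\to T_+$ and exactly one interior critical point, necessarily a minimum; dually $R<0$ throughout, $R\to-\infty$ as $t\to T_+$, with a single interior maximum. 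These critical points are exactly the failure of monotonicity in (ii).

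Finally, (iv) follows from the standard dictionary between the bracket flow and pointed Cheeger--Gromov convergence: the suitably rescaled (i.e.\ normalized bracket) flow converges, as $t\to T_+$, to the scaled fixed-point bracket of part (iii), so $(G,\omega(t),g(t))$ converges in the pointed sense to the shrinking SCF-soliton on the same group $G$, while as $t\to-\infty$ it converges to the degenerate limit bracket found above, an expanding SCF-soliton on a solvable Lie group. \textbf{The main obstacle} is the explicit curvature computation on the $8$-dimensional Lie algebra (getting $P_1$, $S$ and $\Ricci$ right for the symmetric family) together with the careful ODE analysis needed to establish that $T_+<\infty$, that $T_-=-\infty$, and that $\tr p$ and $R$ each possess exactly one interior critical point of the asserted type; once the reduced system is in hand, the convergence statements in (iv) are a comparatively direct application of the machinery.
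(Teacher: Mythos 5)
Your plan is essentially the route the paper takes: the proof of this theorem lives in Examples \ref{u(2)} and \ref{BF-exa}, where the bracket flow is restricted to an explicit two\-/parameter family $\theta_{a,b}$ of LSA-type structures on $\ug(2)\oplus\ug(2)$, the shrinking soliton is found as the member where both $P$ and $S$ become scalar (the variation $\vp_t=\Diag(t,1,1,1)$, $t_0^2=\tfrac{11}{5}$), parts (i)--(ii) follow from a qualitative analysis of the resulting planar ODE \eqref{BF-exa-eq}, and (iv) is Corollary \ref{conv} applied to the normalized limits $(a,b)/\sqrt{a^2+b^2}$. Your identification of the soliton, the non-K\"ahler check (the correct criterion is $J\Ricci J\neq-\Ricci$, i.e.\ the two diagonal blocks differ, which is what you actually verify), and the use of the bracket-flow convergence dictionary all match the paper.

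Two points need repair. First, the $\SO(3)$-symmetry reduction does not by itself produce a two\-/parameter system: the space of $\SO(3)$-equivariant tensors $\theta\in\ggo^*\otimes\ggo^*\otimes\ggo$ for $\ggo=\RR\oplus\Im\HH$ is five\-/dimensional (scalars for $\theta(e_1)e_1$, $\theta(e_1)|_{\Im\HH}$, $\theta(v)e_1$, the $\langle v,w\rangle e_1$ component and the $v\times w$ component of $\theta(v)w$), so ``the remaining entries being forced'' overstates what symmetry gives; you still have to exhibit a specific flow-invariant two\-/parameter slice and verify invariance by computing the right-hand side of \eqref{BF-LSA} on it, which is exactly the content of Example \ref{BF-exa} and is where the real work sits. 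Second, on that family the Chern scalar curvature is $\tr P=-20a^2+12b^2$, which is indefinite, not ``a positive combination of squares'': it is negative on part of the phase plane, and the paper's solutions below the soliton ray have $\tr P\to-\infty$ backward in time. The positivity claim in (ii) holds only on the flow-invariant region $b>\sqrt{11/5}\,a$ (where $\tr P\geq\tfrac{32}{5}a^2>0$), and the ``family of ancient solutions'' in (i) must be taken to be precisely that sub-family; as written, your argument would assert a false statement for the other solutions. Relatedly, the backward limits differ on the two sides of the soliton ray (a solvable algebra with nilradical $\hg_7$ on one side, a nilpotent algebra on the other), so the restriction also matters for (iv).
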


\begin{remark}
Along the way, we found negative Ricci curvature metrics on the Lie group $G$ in the above theorem which are new in the literature as far as we know (compare with \cite{NklNkn}).
\end{remark}

\subsection{Homogeneous symplectic surfaces}
According to the classification obtained in \cite{Ovn}, there are fourteen $4$-dimensional Lie groups admitting a left-invariant symplectic structure (see Table \ref{n1}).  They are all solvable, some of them are actually continuous families of groups and many of them admit more than one symplectic structure.
We have found in Section \ref{dim4-sec} a (unique) SCF-soliton on each of these symplectic Lie groups, with the exception of only four cases.  For two of them we were able to prove the non-existence of (algebraic) solitons.  The SCF-soliton almost-K\"ahler structures and their respective Chern-Ricci and Ricci operators are given in Table \ref{n3}.  They are all expanding solitons if nonflat and are {\it static} (i.e. $p=c\omega$ and $\ricac=0$) if and only if they are K\"ahler-Einstein.  The last equivalence was proved for any compact static almost-K\"ahler structure of dimension $4$ in \cite[Corollary 9.5]{StrTn2}.

\vs \noindent {\it Acknowledgements.}  We are very grateful to an anonymous referee for helpful observations.

\section{Preliminaries and notation}

Let $\ggo$ be a real vector space.  The following notation will be used for $\ggo$ the tangent space $T_pM$ at a point of a differentiable manifold, as well as for the underlying vector space of a Lie algebra.  We consider an almost-hermitian structure $(\omega,g,J)$ on $\ggo$, that is, a $2$-form $\omega$ and an inner product $g$ such that if
$$
\omega=g(J\cdot,\cdot),
$$
then $J^2=-I$.  The above formula is therefore equivalent to $g=\omega(\cdot,J\cdot)$.

The transposes of a linear map $A:\ggo\longrightarrow\ggo$ with respect to $g$ and $\omega$ are respectively given by
$$
g(A\cdot,\cdot)=g(\cdot,A^t\cdot), \qquad \omega(A\cdot,\cdot)=\omega(\cdot,A^{t_\omega}\cdot), \qquad A^{t_\omega}=-JA^tJ,
$$
and if $p:\ggo\times\ggo\longrightarrow\RR$ is a bilinear map, then their complexified (or $J$-invariant) and anti-complexified (or anti-$J$-invariant) components are defined by
$$
A=A^c+A^{ac}, \qquad A^c:=\unm(A-JAJ), \qquad A^{ac}:=\unm(A+JAJ),
$$
and $p=p^c+p^{ac}$, where
$$
p^c=p^{1,1}:=\unm(p(\cdot,\cdot)+p(J\cdot,J\cdot)),  \qquad p^{ac}=p^{2,0+0,2}:=\unm(p(\cdot,\cdot)-p(J\cdot,J\cdot)).
$$
Let $(M,\omega,g,J)$ be a $2n$-dimensional almost-K\"ahler manifold (i.e. $d\omega=0$).  The {\it Chern connection} is the unique connection $\nabla$ on $M$ which is hermitian (i.e. $\nabla\omega=0$, $\nabla g=0$, $\nabla J=0$) and its torsion satisfies $T^{1,1}=0$.  In terms of the Levi Civita connection $D$ of $g$, the Chern connection is given by
$$
\nabla_XY=D_XY+\unm(D_XJ)JY;
$$
in particular, $\nabla=D$ if and only if $(M,\omega,g,J)$ is K\"ahler.  The {\it Chern-Ricci form} $p=p(\omega,g)$ is defined by
$$
p(X,Y)=\sum_{i=1}^{n} g(R(X,Y)e_i,Je_i) = \sqrt{-1} \sum_{i=1}^{n} g(R(X,Y)Z_i,Z_{\overline{i}}),
$$
where $R(X,Y)=\nabla_{[X,Y]} - [\nabla_X,\nabla_Y]$ is the curvature tensor of $\nabla$ and
$$
\{ e_1,\dots,e_n,Je_1,\dots,Je_n\}
$$
is a local orthonormal frame for $g$ with corresponding local unitary frame
$$
Z_i:=(e_i-\sqrt{-1} Je_i)/\sqrt{2}, \qquad Z_{\overline{i}}:=(e_i+\sqrt{-1} Je_i)/\sqrt{2}.
$$
The Chern-Ricci form is always closed, locally exact and in the K\"ahler case $p$ equals the Ricci form $\ricci(J\cdot,\cdot)$.  By Chern-Weil theory, its cohomology class equals $[p]=2\pi c_1(M,J)$, where $c_1(M,J)\in H^2(M,\RR)$ is the first Chern class.

The Chern-Ricci form $p$ of a left-invariant almost-hermitian structure $(\omega,g,J)$ on a Lie group with Lie algebra $\ggo$ is given by
\begin{equation}\label{CRform}
p(X,Y)=-\unm\tr{J\ad{[X,Y]}} + \unm\tr{\ad{J[X,Y]}}, \qquad\forall X,Y\in\ggo.
\end{equation}
(See \cite[Proposition 4.1]{Vzz2} or \cite{Pk}).  Remarkably, $p$ only depends on $J$.  Since $p$ is exact, there exists a unique $Z\in\ggo$ such that
$$
p(X,Y)=g([X,Y],JZ)=\omega(Z,[X,Y]),
$$
and the {\it Chern-Ricci operator} $P$ defined by $p=\omega(P\cdot,\cdot)$ equals
\begin{equation}\label{PadZ}
P=\ad{Z}+(\ad{Z})^{t_\omega}.
\end{equation}
(See \cite[(2.3)]{Frn}).

\section{Symplectic curvature flow}\label{sec-SCF}

Let $(M,\omega,g,J)$ be an almost-K\"ahler manifold of dimension $2n$, i.e. an almost-hermitian manifold such that $d\omega=0$.  With K\"ahler-Ricci flow as a motivation, it is natural to evolve the symplectic structure $\omega$ in the direction of the Chern-Ricci form $p$, but since in general $p\ne p^c$, one is forced to flow the metric $g$ as well in order to preserve compatibility.  The following evolution equation for a one-parameter family $(\omega(t),g(t))$ of almost-K\"ahler structures has recently been introduced by Streets-Tian in \cite{StrTn2} and is called the {\it symplectic curvature flow} (or SCF for short):
\begin{equation}\label{SCFeq}
\left\{\begin{array}{l}
\dpar\omega=-2p, \\ \\
\dpar g=-2p^c(\cdot,J\cdot)-2\ricac,
\end{array}\right.
\end{equation}
where $p$ is the Chern-Ricci form of $(\omega,g)$ and $\ricci$ is the Ricci tensor of $g$.  SCF-solutions preserve the compatibility and the almost-K\"ahler condition (recall that $dp=0$).  The almost-complex structure evolves as follows:
\begin{equation}\label{evJ}
\dpar J=-2JP^{ac}-2J\Ricac = -2JP^{ac} + [\Ricci,J],
\end{equation}
where $\Ricci$ denotes the Ricci operator of the metric $g$ (i.e. $\ricci=g(\Ricci\cdot,\cdot)$) and $P$ the Chern-Ricci operator (i.e. $p=\omega(P\cdot,\cdot)$).  We note that if $J_0$ is integrable, i.e. $(\omega_0,g_0)$ K\"ahler, then $J=J_0$, $\ricac=0$ and $p^c(\cdot,J\cdot)=\ricci$ for all $t$ and so SCF becomes precisely the K\"ahler-Ricci flow for $g(t)$.

\subsection{SCF on Lie groups}\label{sec-LG}
Our aim in this paper is to study the SCF evolution of compact almost-K\"ahler manifolds $(M,\omega,g)$ whose universal cover is a Lie group $G$ and such that if $\pi:G\longrightarrow M$ is the covering map, then $\pi^*\omega$ and $\pi^*g$ are left-invariant.  This is in particular the case of invariant structures on a quotient $M=G/\Gamma$, where $\Gamma$ is a cocompact discrete subgroup of $G$ (e.g. solvmanifolds and nilmanifolds).  A solution on $M$ is therefore obtained by pulling down the corresponding solution on the Lie group $G$, which by diffeomorphism invariance stays left-invariant and so it can be studied on the Lie algebra or infinitesimal level as an ODE.

Any almost-K\"ahler structure on a Lie group with Lie algebra $\ggo$ which is left-invariant is determined by a pair $(\omega,g)$, where $\omega$ is a non-degenerate $2$-form on the Lie algebra $\ggo$ that is {\it closed}, i.e.
\begin{equation}\label{closed}
\omega([X,Y],Z) + \omega([Y,Z],X) + \omega([Z,X],Y) =0, \qquad\forall X,Y,Z\in\ggo,
\end{equation}
and $g$ is an inner product on the underlying vector space $\ggo$ {\it compatible} with $\omega$ (i.e. if $\omega=g(J\cdot,\cdot)$, then $J^2=-I$).  Two almost-K\"ahler structures $(\ggo_1,\omega_1,g_1)$ and $(\ggo_2,\omega_2,g_2)$ are called {\it equivalent} if there is a Lie algebra isomorphism $\vp:\ggo_1\longrightarrow\ggo_2$ such that $\omega_1=\vp^*\omega_2$ and $g_1=\vp^*g_2$.

Since all the tensors involved are determined by their value at the identity of the group, the SCF equation \eqref{SCFeq} on $M$, or on the covering Lie group $G$, becomes an ODE system of the form
\begin{equation}\label{eqLG}
\left\{\begin{array}{l}
\ddt\omega=-2p, \\ \\
\ddt g=-2p^c(\cdot,J\cdot)-2\ricac,
\end{array}\right.
\end{equation}
where $p=p(\omega,g)\in\Lambda^2\ggo^*$ and $\ricac=\ricac(\omega,g)\in\sca^2\ggo^*$.  Thus short-time existence (forward and backward) and uniqueness of the solutions are always guaranteed.

Given a left-invariant almost-hermitian structure $(\omega_0,g_0)$ on a simply connected Lie group $G$, one has that
\begin{equation}\label{equiv}
(\omega,g)=h^*(\omega_0,g_0):=\left(\omega_0(h\cdot,h\cdot),g_0(h\cdot,h\cdot)\right),
\end{equation}
is also almost-hermitian for any $h\in\Gl(\ggo)$, and conversely, any almost-hermitian structure on $\ggo$ is of this form.  Moreover, the corresponding Lie group isomorphism
$$
\widetilde{h}:(G,\omega,g)\longrightarrow (G_\mu,\omega_0,g_0), \qquad\mbox{where}\qquad \mu=h\cdot\lb:=h[h^{-1}\cdot,h^{-1}\cdot],
$$
is an equivalence of almost-hermitian manifolds.  Here $\lb$ denotes the Lie bracket of the Lie algebra $\ggo$ and so $\mu$ defines a new Lie algebra (isomorphic to $(\ggo,\lb)$) with same underlying vector space $\ggo$.  We denote by $G_\mu$ the simply connected Lie group with
Lie algebra $(\ggo,\mu)$.

In this way, if we fix a compatible pair $(\omega_0,g_0)$ on a vector space $\ggo$ of dimension $2n$, then each left-invariant almost-hermitian structure on each $2n$-dimensional simply connected Lie group can be identified with a point in the variety $\lca$ of $2n$-dimensional Lie algebras defined by
$$
\lca:=\{\mu\in\Lambda^2\ggo^*\otimes\ggo:\mu\;\mbox{satisfies the Jacobi condition}\}.
$$
We denote by $\Spe(\omega_0)$ the subgroup isomorphic to $\Spe(n,\RR)$ of $\Gl(\ggo)$ ($\simeq\Gl_{2n}(\RR)$) given by those elements preserving $\omega_0$ (i.e. $\vp^*\omega_0=\omega_0$) and by $\spg(\omega_0)$ its Lie algebra, which is isomorphic to $\spg(n,\RR)$ and given by the maps $A\in\glg(\ggo)$ such that $A^tJ_0+J_0A=0$.  If
$$
\U(\omega_0,g_0):=\Spe(\omega_0)\cap\Or(g_0),
$$
where $\Or(g_0)$ denotes the subgroup of orthogonal maps (i.e. $\vp^*g_0=g_0$), then $\U(\omega_0,g_0)$ is isomorphic to the unitary group $\U(n)$.  Recall that the map $h$ in \eqref{equiv} is unique only up to left-multiplication by elements in $\U(\omega_0,g_0)$.

Note that $\Gl(\ggo)$-orbits in $\lca$ are precisely Lie isomorphism classes.  We are interested in this paper in the algebraic subset $\lca(\omega_0)\subset\lca$ of those Lie brackets for which the fixed $2$-form $\omega_0$ is closed (see \eqref{closed}), i.e. on those points which are almost-K\"ahler.

Recall that two symplectic Lie algebras $(\ggo_1,\omega_1)$ and $(\ggo_2,\omega_2)$  are said to be {\it isomorphic} if $\omega_1=\vp^*\omega_2$ for some Lie algebra isomorphism $\vp:\ggo_1\longrightarrow\ggo_2$.  Therefore, from the varying Lie brackets viewpoint, $\Spe(\omega_0)$-orbits in $\lca(\omega_0)$ are precisely the isomorphism classes of symplectic Lie algebras
$$
\left\{(\ggo,\mu,\omega_0):\mu\in\lca(\omega_0)\right\}.
$$
On the other hand, by \eqref{equiv}, $\U(\omega_0,g_0)$-orbits in $\lca(\omega_0)$ are the equivalence classes of the almost-K\"ahler structures
$$
\left\{(\ggo,\mu,\omega_0,g_0):\mu\in\lca(\omega_0)\right\}.
$$
It also follows that, given $\mu\in\lca(\omega_0)$, the orbit $\Spe(\omega_0)\cdot\mu$ also parameterizes the set of all left-invariant metrics on $G_\mu$ which are compatible with $\omega_0$.

\begin{example}\label{kodaira}
If $\omega_0=e^1\wedge e^{2n}+\dots+e^{n}\wedge e^{n+1}$ and the only nonzero bracket of $\mu_0\in\lca$ is $\mu_0(e_1,e_2)=e_3$, then $\mu_0\in\lca(\omega_0)$ and is isomorphic to $\hg_3\oplus\RR^{2n-3}$ as a Lie algebra, where $\hg_3$ denotes the $3$-dimensional Heisenberg algebra.  As an almost-K\"ahler structure, $(G_{\mu_0},\omega_0,g_0)$ is equivalent to $(H_3\times\RR)\times\RR^{2n-4}$, where $H_3\times\RR$ is the universal cover of the Kodaria-Thurston manifold.  It is easy to prove that $\lca(\omega_0)\cap\Gl(\ggo)\cdot\mu_0=\Spe(\omega_0)\cdot\mu_0$ (i.e. $(\ggo,\mu_0)$ admits a unique symplectic structure up to isomorphism).  Moreover, it is proved in the first example in \cite[Section 3]{praga} that $\Spe(\omega_0)\cdot\mu_0=\U(\omega_0,g_0)\cdot\mu_0$, from which follows that the Lie group $(H_3\times\RR)\times\RR^{2n-4}$ admits a unique left-invariant almost-K\"ahler structure up to equivalence for any $n\geq 2$.  Consequently, the solution starting at this structure will be self-similar for any curvature flow on almost-K\"ahler manifolds invariant by diffeomorphisms.
\end{example}

\subsection{Bracket flow}\label{sec-BF}
In view of the parametrization of left-invariant almost-K\"ahler structures as points in the variety $\lca(\omega_0)\subset\lca$ described in the above section, it is natural to study the dynamical system determined by SCF on $\lca(\omega_0)$.

Consider for a family $\mu(t)\in \Lambda^2\ggo^*\otimes\ggo$ of brackets the following evolution equation, called the {\it bracket flow}:
\begin{equation}\label{intro2}
\ddt\mu=\delta_\mu(P_\mu+\Ricci_\mu^{ac}), \qquad\mu(0)=\lb,
\end{equation}
where $P_\mu,\Ricci_\mu^{ac}\in\End(\ggo)$ are respectively the Chern-Ricci and Ricci operators of the almost-hermitian manifold $(G_\mu,\omega_0,g_0)$ and $\delta_\mu:\End(\ggo)\longrightarrow\Lambda^2\ggo^*\otimes\ggo$ is defined by
\begin{equation}\label{delta}
\delta_\mu(A):=\mu(A\cdot,\cdot)+\mu(\cdot,A\cdot)-A\mu(\cdot,\cdot), \qquad\forall A\in\End(\ggo).
\end{equation}
The bracket flow leaves the variety $\lca(\omega_0)$ invariant (i.e. $(G_{\mu(t)},\omega_0,g_0)$ is almost-K\"ahler for all $t$) and has been proved in \cite{SCF} to be equivalent to the SCF.

\begin{theorem}\label{BF-thm}\cite[Theorem 5.1]{SCF}
For a given simply connected almost-K\"ahler Lie group $(G,\omega_0,g_0)$ with Lie algebra $\ggo$, consider the families of almost-K\"ahler Lie groups
$$
(G,\omega(t),g(t)), \qquad (G_{\mu(t)},\omega_0,g_0),
$$
where $(\omega(t),g(t))$ is the solution to the SCF-flow starting at $(\omega_0,g_0)$ and $\mu(t)$ is the bracket flow solution starting at the Lie bracket $\lb$ of $\ggo$.  Then there exist Lie group isomorphisms $h(t):G\longrightarrow G_{\mu(t)}$ (i.e. $\mu(t)=h(t)\cdot\lb$) such that
$$
(\omega(t),g(t))=h(t)^*(\omega_0,g_0), \qquad\forall t.
$$
Moreover, the isomorphisms $h(t)$ can be chosen as the solution to the following systems of ODE's:
\begin{itemize}
\item[(i)] $\ddt h=-h(P+\Ricci^{ac})=-h(P^{ac}+\Ricci)$, $\quad h(0)=I$.
\item[ ]
\item[(ii)] $\ddt h=-(P_\mu+\Ricci_\mu^{ac})h=-(P_\mu^{ac}+\Ricci_\mu)h$, $\quad h(0)=I$.
\end{itemize}
\end{theorem}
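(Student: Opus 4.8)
The plan is to set up the standard ``bracket flow dictionary''. Since all the tensors are left-invariant, both the SCF system \eqref{eqLG} and the bracket flow \eqref{intro2} are ODE's with smooth (in fact rational) right-hand side — the former on the open set of compatible pairs inside $\Lambda^2\ggo^*\times\sca^2\ggo^*$, the latter on $\Lambda^2\ggo^*\otimes\ggo$ — so each has a unique maximal solution, as already noted. It therefore suffices to produce a single curve $h(t)\in\Gl(\ggo)$ with $h(0)=I$ that simultaneously realizes the SCF-solution as $h(t)^*(\omega_0,g_0)$ and the bracket flow solution as $h(t)\cdot\lb$, and then to read off the ODE's (i)--(ii). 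I would \emph{define} $h(t)$ self-referentially. For $h\in\Gl(\ggo)$ the pair $h^*(\omega_0,g_0)$ (cf.\ \eqref{equiv}) is again compatible, with almost-complex structure $h^{-1}J_0h$; denote by $P(h)$ and $\Ricci^{ac}(h)$ its Chern--Ricci operator and the anti-$J$-invariant part of its Ricci operator, rational functions of $h$. Let $h(t)$ be the maximal solution of
$$
\ddt h=-h\,\bigl(P(h)+\Ricci^{ac}(h)\bigr),\qquad h(0)=I .
$$
Since $\ddt\log|\det h|=-\tr\bigl(P(h)+\Ricci^{ac}(h)\bigr)$ is bounded on compact subintervals, $h(t)$ stays in $\Gl(\ggo)$ on its whole interval of definition.

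Put $(\omega(t),g(t)):=h(t)^*(\omega_0,g_0)$ and $B(t):=P(h(t))+\Ricci^{ac}(h(t))$; no circularity arises, because $B(t)$ is now literally the sum of the two curvature operators of $(\omega(t),g(t))$. Differentiating the pullbacks (transposes taken with respect to $g(t)$),
$$
\ddt\omega=-\omega(B\cdot,\cdot)-\omega(\cdot,B\cdot),\qquad
\ddt g=-g\bigl((B+B^{t})\cdot,\cdot\bigr).
$$
The heart of the proof is to simplify these using only general almost-K\"ahler identities: $P$ is $\omega$-symmetric, so $\omega(P\cdot,\cdot)+\omega(\cdot,P\cdot)=2p$; from $A^{t_\omega}=-JA^{t}J$ one gets $P^{t}=-JPJ$, hence $P+P^{t}=2P^{c}$; and $\Ricci^{ac}$ is $g$-symmetric with $J\Ricci^{ac}=-\Ricci^{ac}J$, whence $\omega(\Ricci^{ac}\cdot,\cdot)+\omega(\cdot,\Ricci^{ac}\cdot)=0$. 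Feeding these in, the $\Ricci^{ac}$-terms cancel in the first equation, giving $\ddt\omega=-2p$; in the second, $B+B^{t}=2P^{c}+2\Ricci^{ac}$, and since the bilinear form $p^{c}(\cdot,J\cdot)$ is exactly $g(P^{c}\cdot,\cdot)$, this reads $\ddt g=-2p^{c}(\cdot,J\cdot)-2\ricac$. Thus $(\omega(t),g(t))$ solves \eqref{eqLG} with initial value $(\omega_0,g_0)$, so by uniqueness it is the SCF-solution; comparing $h(t)^*\omega_0$ (resp.\ $h(t)^*g_0$) with the SCF-solution on the latter's maximal interval via the \emph{linear} ODE satisfied by their difference shows the two maximal intervals coincide. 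This proves $(\omega(t),g(t))=h(t)^*(\omega_0,g_0)$, and since $B(t)$ is built from the operators of $(\omega(t),g(t))$, it proves that $h$ is a solution of (i).

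Now let $\mu(t):=h(t)\cdot\lb=h(t)[h(t)^{-1}\cdot,h(t)^{-1}\cdot]$ and $A(t):=-(\ddt h)\,h(t)^{-1}=h(t)\,B(t)\,h(t)^{-1}$. Differentiating the formula for $h\cdot\lb$, a short computation gives $\ddt\mu=\delta_{\mu}(A)$ with $\delta_\mu$ as in \eqref{delta}. The map $h(t)$ is an isomorphism of almost-K\"ahler Lie algebras from $(\ggo,\lb,h^{*}\omega_0,h^{*}g_0)$ onto $(\ggo,\mu(t),\omega_0,g_0)$, and both the Chern--Ricci and the Ricci operators transform by conjugation under such isomorphisms (for $P$ this follows from \eqref{CRform}, as $p$ depends only on the bracket and $J$, and conjugation by $h$ commutes with taking the $ac$-part since it conjugates $J_0$ to $h^{-1}J_0h$). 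Hence $A(t)=hBh^{-1}=P_{\mu}+\Ricci_{\mu}^{ac}$, the Chern--Ricci and anti-$J$-invariant Ricci operators of $(G_{\mu(t)},\omega_0,g_0)$. Therefore $\mu(t)$ solves \eqref{intro2} with $\mu(0)=\lb$, so it is the bracket flow solution, and $\ddt h=-hB=-(P_{\mu}+\Ricci_{\mu}^{ac})h$, which is (ii). Finally $\mu(t)=h(t)\cdot\lb$ says precisely that each $h(t)\colon G\to G_{\mu(t)}$ is a Lie group isomorphism, and $(\omega(t),g(t))=h(t)^*(\omega_0,g_0)$ is the asserted relation.

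I expect the only genuine obstacle to be the sign-and-symmetry bookkeeping in the middle step — arranging that the $\Ricci^{ac}$-contribution vanishes from the symplectic equation while reassembling, together with $P^{c}$, into the metric equation — together with recording exactly how $P$ and $\Ricci$ behave under the equivalence $h$. Everything else is formal manipulation of pullbacks plus uniqueness of solutions of ODE's.
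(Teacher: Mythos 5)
Your argument is correct for the main assertion and for the first expressions in (i) and (ii), and it is essentially the proof of the cited source \cite[Theorem 5.1]{SCF} (the present paper only quotes the theorem, so that reference is the right benchmark). The verifications that carry the weight are all in place: $P^{t_\omega}=P$ and $(\Ricci^{ac})^{t_\omega}=-\Ricci^{ac}$ make the $\Ricci^{ac}$-term drop out of the $\omega$-equation; $P+P^t=2P^c$ together with $p^c(\cdot,J\cdot)=g(P^c\cdot,\cdot)$ reassembles the $g$-equation; $\ddt(h\cdot\lb)=\delta_{h\cdot\lb}(hBh^{-1})$ when $\ddt h=-hB$; and $P$, $\Ricci$ conjugate correctly under the equivalence $h$ because $h$ intertwines the brackets, the metrics and the almost-complex structures. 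The matching of maximal intervals via the associated linear ODE is the standard way to close that loop.

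The one point your proposal does not address is the second expression in each of (i) and (ii), namely $\ddt h=-h(P^{ac}+\Ricci)$ and $\ddt h=-(P_\mu^{ac}+\Ricci_\mu)h$. This does not follow from what you proved, and it cannot be obtained as a pointwise operator identity: $P+\Ricci^{ac}=P^{ac}+\Ricci$ is equivalent to $P^c=\Ricci^c$, and since $\tr{P^{ac}}=\tr{\Ricci^{ac}}=0$ it would force $\tr{P}=R$, which Section \ref{muA-sec} records as equivalent to the K\"ahler condition for almost abelian structures (it fails, e.g., for the Kodaira--Thurston structure, where $\tr{P}=0$ and $R<0$). Nor can the alternative operator be absorbed into the non-uniqueness of $h$: the difference $P^c-\Ricci^c$ is $g$-symmetric and commutes with $J$, hence does not lie in the Lie algebra of $\U(\omega,g)$ unless it vanishes, and by your own computation the alternative ODE would make the pullback metric evolve by $-2\ricci$ rather than by $-2p^c(\cdot,J\cdot)-2\ricac$. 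So the alternative form either rests on an identity that must be imported from \cite{SCF} with its precise hypotheses, or it should be treated with caution; in any case a complete proof of the theorem as stated must justify it separately, and yours does not.
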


The maximal interval of time existence $(T_-,T_+)$ is therefore the same for both flows, as it is the behavior
of any kind of curvature and so regularity issues can be addressed on the bracket flow.

The above theorem has also the following application on convergence, which follows from \cite[Corollary 6.20]{spacehm} (see \cite[Section 5.1]{SCF} for further information on convergence).

\begin{corollary}\label{conv}
Let $\mu(t)$ be a bracket flow solution and assume that $c_k\mu(t_k)\to\lambda$, for some nonzero $c_k\in\RR$ and a subsequence of times $t_k\to T_\pm$.  Then, after possibly passing to a subsequence, the almost-K\"ahler manifolds $\left(G,\tfrac{1}{c_k^2}\omega(t_k),\tfrac{1}{c_k^2}g(t_k)\right)$ converge in the pointed (or Cheeger-Gromov) sense to $(G_\lambda,\omega_0,g_0)$, as $k\to\infty$.
\end{corollary}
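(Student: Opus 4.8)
The plan is to reduce the statement to Theorem \ref{BF-thm} together with the general convergence principle of \cite[Corollary 6.20]{spacehm}, which asserts that a convergent sequence of Lie brackets in $\lca$ produces a pointed (Cheeger--Gromov) convergent sequence of the associated simply connected Lie groups endowed with the fixed left-invariant tensors $(\omega_0,g_0)$.

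First I would set up the scaling bookkeeping. By Theorem \ref{BF-thm} there are isomorphisms $h(t)\in\Gl(\ggo)$ with $\mu(t)=h(t)\cdot\lb$ and $(\omega(t),g(t))=h(t)^*(\omega_0,g_0)$. Put $h_k:=c_k^{-1}h(t_k)\in\Gl(\ggo)$. Since $\omega_0$ and $g_0$ are bilinear and the action satisfies $(s\,h)\cdot\mu=s^{-1}(h\cdot\mu)$ for $s\ne0$, a one-line computation gives
\[
h_k^*(\omega_0,g_0)=\left(\tfrac{1}{c_k^2}\,\omega(t_k),\,\tfrac{1}{c_k^2}\,g(t_k)\right),
\qquad
h_k\cdot\lb=c_k\,\mu(t_k).
\]
Thus the induced Lie group isomorphism $\widetilde{h}_k$ is an equivalence of almost-K\"ahler manifolds
\[
\widetilde{h}_k:\ \left(G,\,\tfrac{1}{c_k^2}\,\omega(t_k),\,\tfrac{1}{c_k^2}\,g(t_k)\right)\ \longrightarrow\ \left(G_{c_k\mu(t_k)},\,\omega_0,g_0\right),
\]
so the two sides are interchangeable for the purposes of pointed convergence. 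Note also that replacing $(\omega,g)$ by $(\tfrac{1}{c^2}\omega,\tfrac{1}{c^2}g)$ leaves the compatible $J$ and the closedness of $\omega$ unchanged, so each of these structures is again almost-K\"ahler.

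Next I would check that $\lambda$ lies in $\lca(\omega_0)$: both the Jacobi identity and the closedness condition \eqref{closed} for $\omega_0$ are (algebraic, hence) closed conditions on $\Lambda^2\ggo^*\otimes\ggo$, so the limit $\lambda$ of the sequence $c_k\mu(t_k)\in\lca(\omega_0)$ is itself a Lie bracket for which $\omega_0$ is closed; hence $(G_\lambda,\omega_0,g_0)$ is a bona fide almost-K\"ahler Lie group. Now apply \cite[Corollary 6.20]{spacehm} to the convergent sequence of brackets $c_k\mu(t_k)\to\lambda$ in $\lca$: after possibly passing to a subsequence, $\left(G_{c_k\mu(t_k)},\omega_0,g_0\right)$ converges in the pointed sense to $(G_\lambda,\omega_0,g_0)$. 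Composing with the equivalences $\widetilde{h}_k$ from the first step then yields the asserted pointed convergence of $\left(G,\tfrac{1}{c_k^2}\omega(t_k),\tfrac{1}{c_k^2}g(t_k)\right)$ to $(G_\lambda,\omega_0,g_0)$ (see also \cite[Section 5.1]{SCF}).

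The essential content sits entirely inside \cite[Corollary 6.20]{spacehm} --- roughly, using that $\mu_k\to\lambda$ controls the group multiplication near the identity one transports the metrics via the exponential maps over larger and larger balls --- and may be invoked as a black box here. Modulo that, the only point requiring care, and the one I would single out, is that the \emph{same} element $h_k=c_k^{-1}h(t_k)$ must simultaneously realize the rescaled bracket $c_k\mu(t_k)$ and the rescaled almost-K\"ahler structure $\tfrac{1}{c_k^2}(\omega(t_k),g(t_k))$; this is exactly the homogeneity identity displayed above, after which the corollary is immediate.
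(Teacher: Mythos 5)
Your proposal is correct and follows exactly the route the paper takes: the paper derives this corollary directly from Theorem \ref{BF-thm} together with \cite[Corollary 6.20]{spacehm}, and your scaling identity $h_k=c_k^{-1}h(t_k)$, which simultaneously produces $c_k\mu(t_k)$ and $\tfrac{1}{c_k^2}(\omega(t_k),g(t_k))$, is precisely the bookkeeping that makes that citation apply. You have simply written out explicitly what the paper leaves implicit.
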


We note that the limiting Lie group $G_\lambda$ in the above corollary might be non-isomorphic, and even non-homeomorphic, to $G$ (see Examples \ref{6latt} and \ref{BF-exa}).

\subsection{Self-similar solutions}\label{sec-self}

In the general case, an almost-K\"ahler manifold $(M,\omega,g)$ will flow self-similarly along the SCF, in the sense that
$$
(\omega(t),g(t))=(c(t)\vp(t)^*\omega,c(t)\vp(t)^*g),
$$
for some $c(t)>0$ and $\vp(t)\in\Diff(M)$, if and only if
$$
\left\{
\begin{array}{l}
p(\omega,g)=c\omega+\lca_{X}\omega, \\ \\
p^c(\omega,g)(\cdot,J\cdot)+\ricci^{ac}(\omega,g)=cg+\lca_{X}g,
\end{array}\right.
$$
for some $c\in\RR$ and a complete vector field $X$ on $M$.  In analogy to the terminology used in Ricci flow theory, we call such $(\omega,g)$ a
{\it soliton almost-K\"ahler structure} and we say it is {\it expanding}, {\it steady} or {\it shrinking}, if $c<0$, $c=0$ or $c>0$, respectively.  On Lie groups, it is natural to consider a SCF-flow solution to be self-similar if the diffeomorphisms $\vp(t)$ above are actually Lie group automorphisms (this is actually a stronger condition, see \cite[Example 9.1]{SCF}).  It is proved in \cite[Section 7]{SCF} that this is equivalent to the following condition: we say that an almost-K\"ahler structure $(\omega,g)$ on a Lie algebra $\ggo$ is a {\it SCF-soliton} if for some $c\in\RR$ and $D\in\Der(\ggo)$,
\begin{equation}\label{SCF-sol}
\left\{
\begin{array}{l}
P=cI+\unm(D-JD^tJ), \\ \\
P^c+\Ricac=cI+\unm(D+D^t).
\end{array}\right.
\end{equation}
The following condition, suggested by the relationship between the SCF and the bracket flow given in Theorem \ref{BF-thm},
\begin{equation}\label{SCF-algsol}
P+\Ricac=cI+D,
\end{equation}
is sufficient to get a SCF-soliton (see \cite[Proposition 7.4]{SCF}) and an almost-K\"ahler structure for which this holds will be called an {\it algebraic SCF-soliton}, in analogy to the case of homogeneous Ricci solitons (see \cite[Section 3]{homRS} or \cite{Jbl}).  The bracket flow solution starting at an algebraic SCF-soliton is simply given by $\mu(t)=(-2ct+1)^{-1/2}\lb$ and hence they are precisely the fixed points and only possible limits, backward and forward, of any normalized bracket flow solution $c(t)\mu(t)$.  In particular, if in Corollary \ref{conv} one actually has that $c_t\mu(t)\to\lambda$, as $t\to T_\pm$, then the pointed limit $(G_\lambda,\omega_0,g_0)$ is an algebraic soliton.  The absence of certain chaotic behavior for the bracket flow would imply that any SCF-soliton is actually algebriac (see \cite[Section 7.1]{SCF}).

If an almost-K\"ahler structure $(\omega,g)$ satisfies that
\begin{equation}\label{strongly}
\left\{\begin{array}{l}
P=c_1I+D_1, \\ \\
\Ricci^{ac}=c_2I+D_2,
\end{array}\right.
\end{equation}
for some $c_i\in\RR$, $D_i\in\Der(\ggo)$, then $(\omega,g)$ is an algebraic SCF-soliton with $c=c_1+c_2$ and $D=D_1+D_2$.  We call these structures {\it strongly algebraic SCF-solitons}.  So far, all known examples of SCF-solitons are of this kind.

\begin{lemma}\label{cuni}
Let $(G,\omega,g)$ be a unimodular almost-hermitian Lie group such that $\Ricac = cI+D$ for some $c\in\RR$ and $D \in \Der(\ggo)$.  Then,
$$
cR = \tr{(\Ricci^{ac})^2},
$$
where $R=\tr{\Ricci}$ is the scalar curvature of $(G,g)$.
\end{lemma}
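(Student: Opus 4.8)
The plan is to transplant the standard argument for unimodular algebraic Ricci solitons --- where the equation $\Ricci=cI+D$ forces $cR=\tr(\Ricci^2)$ --- to the anti-$J$-invariant part $\Ricac$ of the Ricci operator. The only genuinely non-formal input will be the identity $\tr(\Ricci D)=0$ for derivations on unimodular Lie groups; everything else is linear algebra with $J$.

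First I would show that $\tr\big((\Ricac)^2\big)=\tr(\Ricci\,\Ricac)$. Splitting $\Ricci=\Ricci^{c}+\Ricac$ gives $\tr(\Ricci\,\Ricac)=\tr(\Ricci^{c}\,\Ricac)+\tr\big((\Ricac)^2\big)$, so it is enough to check $\tr(\Ricci^{c}\,\Ricac)=0$. Since $J^2=-I$, the operator $\Ricci^{c}=\unm(\Ricci-J\Ricci J)$ commutes with $J$ while $\Ricac=\unm(\Ricci+J\Ricci J)$ anticommutes with $J$; conjugating by $J$ inside the trace,
$$
\tr(\Ricci^{c}\,\Ricac)=\tr\big((J^{-1}\Ricci^{c}J)(J^{-1}\Ricac J)\big)=\tr\big(\Ricci^{c}(-\Ricac)\big)=-\tr(\Ricci^{c}\,\Ricac),
$$
hence $\tr(\Ricci^{c}\,\Ricac)=0$ (this is just the orthogonality, for the trace inner product, of the $J$-invariant and anti-$J$-invariant components of symmetric endomorphisms). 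Now I would insert the hypothesis $\Ricac=cI+D$:
$$
\tr\big((\Ricac)^2\big)=\tr\big(\Ricci(cI+D)\big)=c\,\tr\Ricci+\tr(\Ricci D)=cR+\tr(\Ricci D),
$$
so the lemma reduces to showing $\tr(\Ricci D)=0$.

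The main obstacle --- and the only step where unimodularity is used --- is exactly this: for a left-invariant metric on a unimodular Lie group, $\tr(\Ricci D)=0$ for every $D\in\Der(\ggo)$. This is a standard fact in the theory of homogeneous Ricci solitons (see, e.g., \cite{homRS}), and I would either cite it or recall the reason: the Ricci operator of a left-invariant metric decomposes as a symmetric term quadratic in the structure constants --- whose pairing with $D$ vanishes because $\delta_{\lb}(D)=0$ for a derivation --- plus $-\unm$ of the Killing-form operator $B$ --- for which $\tr(BD)=\sum_i B(De_i,e_i)=0$, since $\ad(DX)=[D,\ad X]$ forces $B(DX,Y)+B(X,DY)=0$ --- plus a mean-curvature summand that vanishes precisely by unimodularity. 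Feeding $\tr(\Ricci D)=0$ into the previous display yields $cR=\tr\big((\Ricac)^2\big)$, as claimed. That unimodularity cannot be dropped is illustrated by the non-unimodular $8$-dimensional Lie group of Theorem~\ref{LSA-intro}, where $\tr(\Ricci D)$ is a genuine nonzero correction term.
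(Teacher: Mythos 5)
Your argument is correct and is essentially the paper's proof: both rest on the orthogonality $\tr(\Ricci^{c}\Ricac)=0$ and on the unimodular identity $\tr(\Ricci D)=0$, the only difference being that you substitute $\Ricac=cI+D$ into $\tr(\Ricci\,\Ricac)$ directly, whereas the paper equivalently splits $D=D^{c}+D^{ac}=-cI+(cI+D)$ inside $\tr(\Ricci D)$. Your expanded justification of $\tr(\Ricci D)=0$ (moment-map term, Killing-form term, mean-curvature term) is sound; the paper simply cites \cite[Remarks 2.4, 2.7]{alek} for it.
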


\begin{proof}
Since $\Ricac= cI +D$ anti-commute with $J$, we obtain that $D^{ac}=cI + D$ and $D^c = -cI$.  This implies that
\begin{equation}\label{cuni-eq}
\tr{\Ricci D} = \tr{\Ricci D^{c}} + \tr{\Ricci D^{ac}} =  -c \tr{\Ricci} + \tr{(\Ricac)^2},
\end{equation}
and so the lemma follows from the fact that $\tr{\Ricci D}=0$ when $\ggo$ is unimodular (see e.g. \cite[Remarks 2.4, 2.7]{alek}).
\end{proof}

It is well known that if $\ggo$ is unimodular and $\omega$ is closed, then $\ggo$ must be solvable (see \cite{LchMdn}), and any left-invariant metric $g$ on a solvable Lie group has $R\leq 0$, with equality $R=0$ holding if and only if $g$ is flat.

\begin{corollary}
Any unimodular strongly algebraic SCF-soliton $(G,\omega,g)$ as in {\rm \eqref{strongly}} with $g$ nonflat has $c_2\leq 0$, and $c_2=0$ if and only if $\Ricac=0$.
\end{corollary}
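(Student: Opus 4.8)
The plan is to read off the corollary directly from Lemma \ref{cuni}, using only the sign of the scalar curvature on solvable Lie groups. First I would note that the second line of \eqref{strongly}, namely $\Ricac=c_2I+D_2$ with $D_2\in\Der(\ggo)$, is exactly the hypothesis of Lemma \ref{cuni} (take $c=c_2$, $D=D_2$). Since $(G,\omega,g)$ is unimodular, Lemma \ref{cuni} then yields
$$
c_2R=\tr{(\Ricci^{ac})^2}.
$$

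Next I would record two elementary facts. On the one hand, $\Ricac$ is $g$-symmetric: indeed $J$ is $g$-orthogonal with $J^t=-J$, so $(J\Ricci J)^t=J\Ricci J$, hence $\Ricac=\unm(\Ricci+J\Ricci J)$ is symmetric and $\tr{(\Ricac)^2}=|\Ricac|^2\geq 0$, with equality if and only if $\Ricac=0$. On the other hand, since $\ggo$ is unimodular and carries the closed nondegenerate $2$-form $\omega$, it is solvable (by \cite{LchMdn}, as already recalled in the text), and every left-invariant metric on a solvable Lie group has $R\leq 0$ with $R=0$ precisely when $g$ is flat; since $g$ is assumed nonflat, $R<0$.

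Combining these gives $c_2R=|\Ricac|^2\geq 0$ together with $R<0$, which forces $c_2\leq 0$. Finally, $c_2=0$ is equivalent to $|\Ricac|^2=0$, i.e.\ to $\Ricac=0$: the forward implication uses $R\neq 0$, and the backward implication is immediate from $c_2R=|\Ricac|^2$. There is essentially no obstacle here; the only point requiring a (trivial) check is that the anti-complexified part of the soliton data in \eqref{strongly} is literally of the form $cI+D$ with $D$ a derivation, which holds by the very definition of a strongly algebraic SCF-soliton, so Lemma \ref{cuni} applies verbatim.
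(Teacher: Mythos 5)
Your proof is correct and follows exactly the route the paper intends: apply Lemma \ref{cuni} with $c=c_2$, $D=D_2$, use that $\Ricac$ is symmetric so $\tr{(\Ricac)^2}=|\Ricac|^2\geq 0$, and combine with $R<0$ for a nonflat left-invariant metric on the (necessarily solvable, by unimodularity and $d\omega=0$) Lie group. Nothing is missing.
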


Lemma \ref{cuni} is no longer true if $\ggo$ is not unimodular, counterexamples can be easily found among the classes of structures studied in the next sections (see e.g. the soliton on $\rg_2´$ in Table \ref{n3}).  Anyway, formula \eqref{cuni-eq} can always be used in the non-unimodular case.

\section{Almost abelian solvmanifolds}\label{muA-sec}

We study in this section the SCF and its solitons in a class of solvable Lie algebras which is relatively simple from the algebraic point of view but yet geometrically rich and exotic.

Let $(G,\omega,g)$ be an almost-K\"ahler Lie group with Lie algebra $\ggo$ and assume that $\ggo$ has a codimension-one abelian ideal $\ngo$.  These Lie algebras are sometimes called {\it almost-abelian} in the literature (see e.g. \cite{Bck,CnsMcr}).    It is easy to see that there exists an orthonormal basis $\{ e_1,\dots,e_{2n}\}$ such that
$$
\ngo=\la e_1,\dots,e_{2n-1}\ra, \qquad \omega=e^1\wedge e^{2n}+\omega_1, \qquad
J=\left[\begin{array}{c|c|c} 0&0&-1 \\\hline &&\\ 0&\quad J_1\quad &0 \\ &&\\ \hline 1&0&0 \end{array}\right],
$$
where $\{ e^i\}$ denotes the dual basis, $\omega_1$ is a nondegenerate $2$-form on $\ngo_1:=\la e_2,\dots,e_{2n-1}\ra$ and $\omega_1=g(J_1\cdot,\cdot)$.  We fix in what follows the orthonormal basis $\{ e_i\}$ and the $2$-form $\omega$, thus obtaining a fixed euclidean symplectic vector space $(\ggo,\omega,g)$ which can be identified with $\RR^{2n}$.

Recall from Section \ref{sec-LG} the notation $\Spe(\omega)$, $\spg(\omega)$ and $\U(\omega,g)$.   We also use this notation for the $2$-form $\omega_1$ above and obtain $\Spe(\omega_1)$, $\spg(\omega_1)$ and $\U(\omega_1,g_1)$, where $g_1=g|_{\ngo_1}$, which are respectively isomorphic to $\Spe(n-1,\RR)$, $\spg(n-1,\RR)$ and $\U(n-1)$.

Each of these Lie algebras is therefore determined by the $(2n-1)\times(2n-1)$-matrix
$$
A:=\ad{e_{2n}}|_{\ngo},
$$
and so it will be denoted by $\mu_A$.  Thus $\mu_A$ is always solvable, $\ngo$ is always an abelian ideal (which is the nilradical of $\mu_A$ if and only if $A$ is not nilpotent) and $\mu_A$ is nilpotent if and only if $A$ is a nilpotent matrix.  It is easy to check that $\mu_A$ is isomorphic to $\mu_B$ if and only if $A$ and $B$ are conjugate up to a nonzero scaling.

\begin{proposition}\label{formA}
Any almost-K\"ahler Lie algebra with a codimension-one abelian ideal is equivalent to
$$
(\ggo,\mu_{A},\omega,g), \qquad
A=\left[\begin{array}{c|c}
a&v^t\\\hline &\\
0& \quad A_1 \quad \\ &\\
\end{array}\right],
$$
for some $a\geq 0$, $v\in\RR^{2n-2}$ and $A_1\in\spg(\omega_1)\simeq \spg(n-1,\RR)$ (i.e. $A_1^tJ_1+J_1A_1=0$).
\end{proposition}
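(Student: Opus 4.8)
The plan is to combine the adapted-basis normalization stated just before the Proposition with the closedness of $\omega$, which is the only hypothesis not yet used. First I would recall how the orthonormal basis is produced, since this step already realizes the structure as some $(\ggo,\mu_A,\omega,g)$ up to equivalence: as $\ngo$ has codimension one, its $g$-orthogonal complement is spanned by a unit vector $e_{2n}$; put $e_1:=-Je_{2n}$, a unit vector orthogonal to $e_{2n}$ (because $g(Je_{2n},e_{2n})=\omega(e_{2n},e_{2n})=0$) and hence lying in $\ngo$. Then $\ngo_1:=\{e_1,e_{2n}\}^\perp$ is $(2n-2)$-dimensional and $J$-invariant (a one-line check using $J^t=-J$ and $Je_1=e_{2n}$), and for any orthonormal basis $e_2,\dots,e_{2n-1}$ of $\ngo_1$ one verifies directly that $\omega=e^1\wedge e^{2n}+\omega_1$ and that $J$ has the displayed block form, with $\omega_1:=\omega|_{\ngo_1}=g(J_1\cdot,\cdot)$ and $J_1:=J|_{\ngo_1}$. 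In this basis $\mu_A$ is determined by $A=\ad_{e_{2n}}|_\ngo$, an a priori arbitrary $(2n-1)\times(2n-1)$ matrix, and the matrix of the (degenerate) restriction $\omega|_\ngo$ is $\left[\begin{smallmatrix}0&0\\0&\Omega_1\end{smallmatrix}\right]$ with respect to the splitting $\RR e_1\oplus\ngo_1$, where $\Omega_1$ is the invertible matrix of $\omega_1$ and the kernel of $\omega|_\ngo$ is exactly $\RR e_1$.

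The key step is to feed $d\omega=0$ into this picture. Since $\ngo$ is an abelian ideal, the only nonzero brackets of $\mu_A$ are $[e_{2n},X]=AX$ for $X\in\ngo$, so in the cocycle identity \eqref{closed} every triple with all three entries in $\ngo$ vanishes and it suffices to test $(e_{2n},X,Y)$ with $X,Y\in\ngo$; this yields $\omega(AX,Y)+\omega(X,AY)=0$ for all such $X,Y$, i.e. $A^t\Omega+\Omega A=0$ with $\Omega$ the matrix of $\omega|_\ngo$. Writing $A=\left[\begin{smallmatrix}a&v^t\\w&A_1\end{smallmatrix}\right]$ in the splitting $\RR e_1\oplus\ngo_1$ and expanding this matrix identity, the mixed blocks give $\Omega_1 w=0$, whence $w=0$ by invertibility of $\Omega_1$, while the $\ngo_1\times\ngo_1$ block gives $A_1^t\Omega_1+\Omega_1 A_1=0$, i.e. $A_1\in\spg(\omega_1)$ (equivalently $A_1^tJ_1+J_1A_1=0$); the scalar $a$ and the vector $v$ remain unconstrained. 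Thus $A$ has the asserted block-triangular shape.

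It remains to arrange $a\geq0$. Replacing $e_{2n}$ by $-e_{2n}$ forces $e_1=-Je_{2n}$ to be replaced by $-e_1$; this is again an orthonormal basis in which $\omega$ and $J$ take the displayed forms, and in it $A$ becomes $\left[\begin{smallmatrix}-a&v^t\\0&-A_1\end{smallmatrix}\right]$, of the same shape with $-A_1\in\spg(\omega_1)$. Equivalently, this is the equivalence of almost-K\"ahler Lie algebras induced by the isometry $\Diag(-1,I_{2n-2},-1)$, which commutes with $J$ and fixes $\omega$. Hence we may always take the scalar entry nonnegative, which finishes the proof.

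I do not expect a genuine obstacle: once the basis is fixed the argument is pure bookkeeping. The single point requiring care is the degeneracy of $\omega|_\ngo$, whose one-dimensional kernel $\RR e_1$ is precisely what allows the block $v$ (the $\ngo_1\to\RR e_1$ component of $A$) to survive while the block $w$ (the $\RR e_1\to\ngo_1$ component) is killed; one must therefore carry the $\omega|_\ngo$-skewness of $A$ through the block decomposition without silently adding or losing a constraint.
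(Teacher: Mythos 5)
Your proposal is correct and follows essentially the same route as the paper: reduce closedness of $\omega$ to the single condition $\omega(AX,Y)-\omega(AY,X)=0$ on $\ngo$ (you phrase it as $A^t\Omega+\Omega A=0$), read off $Ae_1\in\RR e_1$ and $A_1\in\spg(\omega_1)$ from the block structure, and normalize the sign of $a$ by a change of basis. Your version is slightly more explicit about constructing the adapted basis and about the sign normalization (flipping both $e_1$ and $e_{2n}$ so that $\omega$ and $J$ keep their displayed form), but the substance is identical.
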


\begin{proof}
By using that the only nonzero Lie brackets are the ones involving $e_{2n}$, it is easy to see that $\omega$ is closed (see \eqref{closed}) if and only if
$$
\omega(Ae_i,e_j) - \omega(Ae_j,e_i) = 0, \qquad\forall i,j\ne 2n,
$$
which is equivalent to $Ae_1\in\RR e_1$ and $\omega_1(A_1\cdot,\cdot)+\omega_1(\cdot,A_1\cdot)=0$.  Thus $A_1\in\spg(\omega_1)$ and $Ae_1=ae_1$ for some $a\in\RR$, which can be assumed nonnegative by changing to the basis $\{ e_1,\dots,-e_{2n}\}$ if necessary.
\end{proof}

\begin{remark}\label{J1}
It can be assumed that $J_1=\left[\begin{array}{c|c}
0&-I\\\hline
I& 0
\end{array}\right]$, in which case
$$
\spg(\omega_1)=\left\{\left[\begin{array}{c|c}
B&C\\\hline
D&-B^t
\end{array}\right] : C^t=C, \quad D^t=D\right\}.
$$
\end{remark}

The almost-K\"ahler Lie algebra $(\ggo,\mu_A,\omega,g)$ in Proposition \ref{formA} determines a left-invariant almost-K\"ahler structure on the corresponding simply connected Lie group $G_{\mu_A}$, which will be denoted by $(G_{\mu_A},\omega,g)$.

Let $\mu_B$ be another Lie algebra as above, where
$$
B=\left[\begin{array}{c|c}
b&w^t\\\hline &\\
0& \quad B_1 \quad \\ &\\
\end{array}\right], \qquad b\geq 0, \quad w\in\RR^{2n-2}, \quad B_1\in\spg(\omega_1).
$$

\begin{proposition}\label{iso-equiv}
Let $A$, $B$ be two matrices as above and assume that neither is nilpotent.

\begin{itemize}
\item[(i)] The symplectic Lie algebras $(\ggo,\mu_A,\omega)$ and $(\ggo,\mu_B,\omega)$ are isomorphic if and only if there exists $\alpha\ne 0$, $\vp_1\in\Spe(\omega_1)$ and $u\in\ngo_1$ such that
    $$
    b=\alpha a, \qquad B_1=\alpha\vp_1A_1\vp_1^{-1}, \qquad w=\alpha^2(\vp_1^t)^{-1}\left(v+(A_1^t-aI)J_1\vp_1^{-1}u\right).
    $$
\item[(ii)] The almost-K\"ahler structures $(G_{\mu_A},\omega,g)$ and $(G_{\mu_B},\omega,g)$ are equivalent if and only if $b=a$ and there exists $\vp_1\in\U(\omega_1,g_1)\simeq\U(n-1)$ such that
$$
B_1=\vp_1A_1\vp_1^{-1} \quad (B_1=\pm\vp_1A_1\vp_1^{-1} \;\mbox{if}\; a=b=0), \qquad w=\vp_1v.
$$
\end{itemize}
\end{proposition}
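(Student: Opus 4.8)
The plan is to reduce both isomorphism and equivalence questions to the action of $\Gl(\ggo)$ (resp.\ $\U(\omega,g)$) on the brackets $\mu_A$, using the block structure of a generic symplectic (resp.\ unitary) map that is compatible with the splitting $\ggo = \RR e_1 \oplus \ngo_1 \oplus \RR e_{2n}$. First I would observe that since neither $A$ nor $B$ is nilpotent, the nilradical of $\mu_A$ is precisely $\ngo = \la e_1,\dots,e_{2n-1}\ra$, hence canonically determined; any Lie algebra isomorphism $\vp:\ggo\to\ggo$ with $\vp^*\omega = \omega$ (or an equivalence, with also $\vp^*g = g$) must preserve $\ngo$, and therefore sends $e_{2n}$ to $\pm\tfrac{1}{\alpha}e_{2n}$ modulo $\ngo$ for some scalar $\alpha\ne 0$ (forced to $\alpha>0$ or handled by the sign flip $e_{2n}\mapsto -e_{2n}$ when $a=b=0$). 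Writing $\vp$ in block form relative to this splitting and using $\vp^*\omega=\omega$, I would extract exactly which block maps are allowed: the $e_1$--$e_1$ entry and $e_{2n}$--$e_{2n}$ entry are constrained by $\omega = e^1\wedge e^{2n}+\omega_1$, and the restriction of $\vp$ to $\ngo_1$ must lie in $\Spe(\omega_1)$ (resp.\ $\U(\omega_1,g_1)$ for the metric statement). There is also a ``shear'' freedom, i.e.\ an element $u\in\ngo_1$ encoding $\vp(e_{2n}) \in e_{2n} + \RR e_1 + \ngo_1$, which will be responsible for the affine term in the formula for $w$.

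Next I would translate the condition $\mu_B = \vp\cdot\mu_A$ into matrix identities. Since the only nonzero brackets are $\mu_A(e_{2n},x) = Ax$ for $x\in\ngo$, the intertwining condition becomes a relation of the form $B = (\text{scalar})\cdot(\vp|_\ngo)\, A\,(\vp|_\ngo)^{-1}$ after accounting for the rescaling of $e_{2n}$ by $\alpha$. Decomposing $\vp|_\ngo$ into its action on $\RR e_1$ and on $\ngo_1$ together with the off-diagonal shear piece, the block-triangular shape of $A$ and $B$ (diagonal blocks $a$ and $A_1$, upper-right block $v$) then yields: the $(1,1)$-entry gives $b = \alpha a$; the lower-right block gives $B_1 = \alpha\,\vp_1 A_1\vp_1^{-1}$ with $\vp_1\in\Spe(\omega_1)$; and the $(1,\ngo_1)$-block produces the relation for $w$ in which the shear vector $u$ enters through the commutator-type term $(A_1^t - aI)J_1\vp_1^{-1}u$ — here the $J_1$ and the transpose appear because the constraint $\vp^*\omega=\omega$ forces the $e_1$-component of $\vp(e_{2n})$ and the $\ngo_1$-shear to be linked via $\omega_1 = g(J_1\cdot,\cdot)$. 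For part (ii), the extra requirement $\vp^*g = g$ forces $\alpha^2 = 1$, hence $\alpha = 1$ (so $b=a$) except in the flat/ambiguous case $a=b=0$ where $\alpha = \pm 1$ survives; it forces $\vp_1\in\U(\omega_1,g_1)$; and crucially it kills the shear, $u=0$, since an orthogonal map cannot have the off-diagonal structure a nonzero $u$ would require. This collapses the $w$-relation to $w = \vp_1 v$.

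The main obstacle I anticipate is the bookkeeping in part (i): correctly identifying the constraint that $\vp^*\omega=\omega$ imposes on the coupling between the $\RR e_1$-shift and the $\ngo_1$-shift of the image of $e_{2n}$, and then checking that an arbitrary choice of $(\alpha,\vp_1,u)$ subject to the stated relations does produce a genuine symplectic Lie algebra isomorphism — i.e.\ the converse direction. For the converse I would simply exhibit $\vp$ explicitly from the data $(\alpha,\vp_1,u)$ in block form and verify $\vp^*\omega=\omega$ and $\vp\cdot\mu_A=\mu_B$ by direct computation; the forward direction is the extraction argument sketched above. A minor subtlety is that the scalar $\alpha$ in part (i) is allowed to be any nonzero real (reflecting the scaling freedom $\mu_A\sim\mu_{tA}$), whereas in part (ii) the rigidity of the metric pins it down, which is exactly why the equivalence classification is strictly finer. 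One should also double-check the exponents of $\alpha$ appearing in the $w$-formula (the $\alpha^2$ factor) against the scaling weights of $v$ versus $a$ and $A_1$, which is where I expect a sign or exponent to be easy to get wrong.
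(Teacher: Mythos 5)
Your proposal follows essentially the same route as the paper: use that $\ngo$ is the nilradical of both algebras (hence preserved by any isomorphism), write a general $\vp\in\Spe(\omega)$ in block-triangular form with a shear parameter $u\in\ngo_1$, read off the stated relations from $B=\alpha\,\vp|_{\ngo}\,A\,(\vp|_{\ngo})^{-1}$, and for (ii) observe that orthogonality kills the shear and pins $\alpha=\pm 1$ (with $-1$ only when $a=b=0$). The one minor bookkeeping slip is that $\vp^*\omega=\omega$ couples $u$ to the $e_1$-components of $\vp|_{\ngo_1}$ (via $\vp_1^tJ_1=J_1\vp_1^{-1}$), not to the $e_1$-component of $\vp(e_{2n})$, which remains a free parameter $\beta$ playing no role in the final formulas.
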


\begin{proof}
To prove part (i), we first recall from Section \ref{sec-LG} that these symplectic Lie algebras are isomorphic if and only if $\mu_B=\vp\cdot\mu_A$ for some $\vp\in\Spe(\omega)$.  It is easy to see by using that $\vp$ leaves $\ngo$ invariant (notice that $\ngo$ is the nilradical of both Lie algebras as $A$ and $B$ are not nilpotent), that such a $\vp$ must have the form
$$
\vp=\left[\begin{array}{c|c|c}
\alpha & \alpha(J_1\vp_1^{-1}u)^t & \beta\\\hline & & \\
0 & \vp_1 & u \\ & & \\ \hline
0 & 0 & \alpha^{-1}
\end{array}\right], \qquad \mbox{for some} \quad \alpha,\beta\in\RR, \quad u\in\RR^{2n-2}, \quad \vp_1\in\Spe(\omega_1).
$$
Condition $\mu_B=\vp\cdot\mu_A$ is now equivalent to $B=\alpha\vp|_{\ngo} A(\vp|_{\ngo})^{-1}$, from which part (i) easily follows.

We now prove part (ii).  Since the structures are equivalent if and only if there exists $\vp\in\U(\omega,g)$ such that $\mu_B=\vp\cdot\mu_A$, we obtain from part (i) that $\vp$ has the form above with $\beta=0$, $u=0$, $\alpha=\pm 1$ and $\vp_1\in\U(\omega_1,g_1)$.  Thus $a=b$ since $a,b\geq 0$ and $\alpha=-1$ is only allowed when $a=b=0$, concluding the proof.
\end{proof}

\begin{remark}\label{scaling}
It follows that if
$$
A_\alpha:=\left[\begin{array}{c|c}
\alpha a&\alpha^2 v\\ \hline &\\
0& \quad \alpha A_1 \quad \\ &\\
\end{array}\right], \qquad \alpha>0,
$$
then $(G_{\mu_{A_\alpha}},\omega,g)$ is equivalent to the almost-K\"ahler Lie group $(G_{\mu_A},\omega, g_\alpha)$, where $g_\alpha$ is the inner product defined by $g_\alpha(e_1,e_1)=\alpha^2$, $g_\alpha(e_{2n},e_{2n})=\alpha^{-2}$, $g_\alpha(e_1,e_{2n})=0$ and $g_\alpha(e_i,e_j)=\delta_{ij}$ for all $2\leq i,j\leq 2n-1$.  In the case $v=0$, $\mu_{A_\alpha}=\alpha\mu_A$ and $(G_{\mu_{A_\alpha}},\omega,g)$ is also equivalent to the almost-K\"ahler Lie group $(G_{\mu_A},\alpha^{-2}\omega, \alpha^{-2}g_\alpha)$.
\end{remark}

\begin{example}\label{nilpmat-exa}
If $n=3$ and $A$, $B$ are defined by taking $a=b=0$, $v=w=0$ and
\begin{equation}\label{nilpmat}
A_1=\left[\begin{array}{c|c}
\begin{smallmatrix} 0&0\\ 0&0\end{smallmatrix} & \begin{smallmatrix} 0&0\\ 0&1\end{smallmatrix} \\\hline
\begin{smallmatrix} 0&0\\ 0&0\end{smallmatrix} & \begin{smallmatrix} 0&0\\ 0&0\end{smallmatrix}
\end{array}\right], \qquad
B_1=\left[\begin{array}{c|c}
\begin{smallmatrix} 0&0\\ 0&0\end{smallmatrix} & \begin{smallmatrix} 0&0\\ 0&-1\end{smallmatrix} \\\hline
\begin{smallmatrix} 0&0\\ 0&0\end{smallmatrix} & \begin{smallmatrix} 0&0\\ 0&0\end{smallmatrix}
\end{array}\right],
\end{equation}
then the Lie algebras $\mu_A$ and $\mu_B$ are isomorphic but the symplectic Lie algebras $(\ggo,\mu_A,\omega)$ and $(\ggo,\mu_B,\omega)$ are not.  Indeed, $A_1$ and $B_1$ are $\Gl_4(\RR)$-conjugate but they belong to different $\Spe(2,\RR)$-conjugacy classes.
\end{example}

It follows from \cite[(8)]{Arr} that the Ricci operator of $(G_{\mu_A},g)$ is given by
\begin{equation}\label{RicA}
\begin{array}{ccl}
\Ricci & = & \left[\begin{array}{c|c}
& \\
\unm[A, A^t]-a S(A) &  0 \\ & \\\hline
0 & -\tr S(A)^2
\end{array}\right] \\ \\
& = & \left[\begin{array}{c|c|c}
-a^2 +\unm |v|^2 & (\unm A_1v-av)^t & 0\\\hline & & \\
\unm A_1v-av & \unm [A_1, A_1^t]-\unm vv^t - aS(A_1) & 0 \\ & & \\ \hline
0 & 0 & -a^2-\unm |v|^2-\tr{S(A_1)^2}
\end{array}\right],
\end{array}
\end{equation}
and a straightforward computation shows that its anti-J-invariant part is
\begin{equation}\label{RicacA}
\Ricac = \left[\begin{array}{c|c|c}
\unm\left(|v|^2 + \tr{S(A_1)^2}\right) & \left(\unc A_1v-\tfrac{a}{2}v\right)^t  & 0 \\\hline &&\\
\unc A_1v-\tfrac{a}{2}v & \unm [A_1, A_1^t]- aS(A_1) - \unm(vv^t)^{ac} & J_1\left(\unc A_1v-\tfrac{a}{2}v\right) \\&& \\\hline
0 & \left(J_1\left(\unc A_1v-\tfrac{a}{2}v\right)\right)^t & -\unm\left(|v|^2 + \tr{S(A_1)^2}\right)
\end{array}\right].
\end{equation}
The scalar curvature of $(G_{\mu_A},g)$ is therefore given by
$$
R=-a^2-\tr{S(A)^2} = -2a^2-\unm|v|^2-\tr{S(A_1)^2}.
$$
By using \eqref{CRform}, it is straightforward to obtain that the Chern-Ricci operator of $(G_{\mu_A},\omega,g)$ is given by
\begin{equation}\label{PA}
P=\left[\begin{array}{c|c|c}
-a^2 & -\left(\unm A_1^tv+av\right)^t & 0\\\hline
 & & \\
0 & 0 & -J_1\left(\unm A_1^tv+av\right) \\
 & & \\ \hline
0 & 0 & -a^2
\end{array}\right],
\end{equation}
and thus the Chern scalar curvature is $\tr{P}=-2a^2$.

We note that the following conditions are equivalent:

\begin{itemize}
\item $(G_{\mu_A},\omega,g)$ is K\"ahler.

\item $\Ricac=0$.

\item $v=0$ and $A_1^t=-A_1$ (i.e. $A_1\in\sug(n-1)$).

\item $R=\tr{P}$.

\item $(G_{\mu_A},\omega,g)$ is either equivalent as an almost-K\"ahler manifold (not as a Lie group) to $\RR H^2\times\RR^{2n-2}$, where $\RR H^2$ denotes the $2$-dimensional real hyperbolic space ($a>0$) or to the euclidean space $\RR^{2n}$ ($a=0$).
\end{itemize}

The equivalence between the first and third conditions above also follows from Proposition \ref{formA} and \cite[Lemma 6.1]{CRF}.

\subsection{SCF-solitons}
We now explore necessary and sufficient conditions on the matrix $A$ to obtain a SCF-soliton $(G_{\mu_A},\omega,g)$.

\begin{theorem}\label{SA-sol}
Let $(G_{\mu_A},\omega,g)$ denote the almost-K\"ahler structure defined as in Proposition \ref{formA}.
\begin{itemize}
\item[(i)] If $A$ is not nilpotent, then $\mu_A$ is an algebraic SCF-soliton if and only if $v=0$ and $A_1$ is normal, if and only if $A$ is normal.
\item[(ii)] If $v=0$, then $\mu_A$ is an algebraic SCF-soliton if and only if either $A$ is normal or $A$ is nilpotent (i.e. $a=0$ and $A_1$ nilpotent) and
\begin{equation}\label{nilsol}
[A_1,[A_1,A_1^t]]=-\frac{|[A_1,A_1^t]|^2}{|A_1|^2}A_1.
\end{equation}
\end{itemize}
\end{theorem}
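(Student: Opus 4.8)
The plan is to rewrite the algebraic-soliton condition $P+\Ricac=cI+D$ directly: once $c$ is fixed it forces $D=P+\Ricac-cI$, so everything reduces to deciding when this explicit operator — given by \eqref{RicacA} and \eqref{PA} — is a derivation of $\mu_A$. First I would record what derivations look like. If $A$ is not nilpotent then $\ngo$ is the nilradical of $\mu_A$, hence invariant under every $D\in\Der(\mu_A)$; writing $D=\left[\begin{smallmatrix} D_1 & \ast\\ 0 & \lambda\end{smallmatrix}\right]$ for $\ggo=\ngo\oplus\RR e_{2n}$, the derivation identity reduces to $[D_1,A]=\lambda A$ on $\ngo$ (with $\ast$ arbitrary), whence $\lambda a=0$ by taking traces. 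If $a=0$ the vector $e_1$ is central, $\mu_A=\RR e_1\oplus\mu'$, and a block-diagonal $D=\Diag(d_1,D_{22},\lambda)$ for $\RR e_1\oplus\ngo_1\oplus\RR e_{2n}$ is a derivation if and only if $[D_{22},A_1]=\lambda A_1$.

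Next I would dispose of the case $v=0$, which underlies (ii) and half of (i). Here $\Ricac$ and $P$ are block-diagonal, so $P+\Ricac=\Diag\bigl(-a^2+\unm\tr S(A_1)^2,\ \unm[A_1,A_1^t]-aS(A_1),\ -a^2-\unm\tr S(A_1)^2\bigr)$ and $D=P+\Ricac-cI$ is block-diagonal too; combining this with the derivation description and the identity $[S(A_1),A_1]=-\unm[A_1,A_1^t]$, the soliton condition becomes
\[
[C,A_1]+aC=2\lambda A_1,\qquad \lambda a=0,\qquad C:=[A_1,A_1^t].
\]
Two elementary facts do the work: since $C^t=C$ one has $\langle[X,C],C\rangle=\tr(XC^2)-\tr(CXC)=0$ for all $X$, and $\langle[C,A_1],A_1\rangle=\tr\bigl(C[A_1,A_1^t]\bigr)=|C|^2$. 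If $a>0$ then $\lambda=0$ and $[C,A_1]=-aC$; pairing with $C$ gives $a|C|^2=0$, so $C=0$ and $A_1$ is normal. If $a=0$ then $[C,A_1]=2\lambda A_1$ with $\lambda$ absorbable into $c$, and pairing with $A_1$ forces $2\lambda=|C|^2/|A_1|^2$, which is precisely \eqref{nilsol}. When in addition $A_1$ is not nilpotent I would exclude $C\neq0$: conjugating $[C,A_1]=2\lambda A_1$ by $J_1$ (using $J_1CJ_1=-C$ and $J_1A_1J_1=A_1^t$) gives $[C,A_1^t]=-2\lambda A_1^t$, so together with $[A_1,A_1^t]=C$ the triple $\bigl(\tfrac1{\sqrt\lambda}A_1,\ \tfrac1\lambda C,\ \tfrac1{\sqrt\lambda}A_1^t\bigr)$ is a standard $\slg_2(\RR)$-triple acting on $\ngo_1$; its nilpositive element $A_1\neq0$ would then be nilpotent, a contradiction. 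Conversely, for $A_1$ normal the operator $D=\Diag\bigl(\tr S(A_1)^2,\ -aS(A_1)+(a^2+\unm\tr S(A_1)^2)I,\ 0\bigr)$ (with $c=-a^2-\unm\tr S(A_1)^2$) is a derivation, and for $A_1$ nilpotent satisfying \eqref{nilsol} the operator $D=P+\Ricac-cI$ with $c=-\unm\tr S(A_1)^2-\tfrac{|C|^2}{2|A_1|^2}$ is a derivation; this gives the "if" directions and finishes (ii) and the subcase $v=0$ of (i).

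Finally I would treat (i) with $v\neq0$ and $A$ not nilpotent, where the claim is that $\mu_A$ is never an algebraic soliton. Since $D(\ngo)\subseteq\ngo$, the $(2n,\ngo_1)$-block of $P+\Ricac$ must vanish; by \eqref{RicacA} it equals $\bigl(J_1(\unc A_1v-\tfrac a2v)\bigr)^t$, so $A_1v=2av$, and then its $(\ngo_1,1)$-block $\unc A_1v-\tfrac a2v$ vanishes too, making $D_1$ block upper-triangular on $\ngo$ with $\ngo_1$-block $E-cI$, $E:=\unm C-aS(A_1)-\unm(vv^t)^{ac}$, and $\lambda a=0$. For $a>0$ (so $\lambda=0$) the remaining conditions are $[E,A_1]=0$ together with the off-diagonal identity coming from the $(1,\ngo_1)$-block of $[D_1,A]=0$; using $A_1v=2av$, $A_1^t(J_1v)=-2a\,J_1v$, $(vv^t)^{ac}v=\unm|v|^2v$ and $[S(A_1),A_1]=-\unm C$, I would boil these down to
\[
|A_1^tv|^2=\bigl(2a^2+\tfrac32|v|^2+\tr S(A_1)^2\bigr)|v|^2,\qquad |C|^2=\bigl(\tfrac32|v|^2+\tr S(A_1)^2-2a^2\bigr)|v|^2,
\]
while feeding $v$ into $[E,A_1]=0$ forces $(A_1-2aI)(A_1-3aI)(A_1^tv)=0$, equivalently $A_1(Cv)=3a\,Cv$. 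The hard part will be closing this subcase: the expected route is to play $A_1v=2av$ and $A_1(Cv)=3a\,Cv$ against the symplectic structure of $A_1\in\spg(\omega_1)$ (eigenvalues come in pairs $\pm\mu$, and the $\mu$- and $-\mu$-eigenspaces are $\omega_1$-isotropic and $\omega_1$-dual), combined with the two norm identities above, to conclude $v=0$; the case $a=0$ (then $A_1$ not nilpotent and $A_1v=0$) is entirely parallel. Since $A$ is normal exactly when $v=0$ and $A_1$ is normal, the three equivalent conditions in (i) then coincide.
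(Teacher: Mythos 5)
The decisive step of part (i) --- ruling out $v\neq 0$ when $A$ is not nilpotent --- is not actually proved. You correctly reduce to $A_1v=2av$, $[E,A_1]=0$ and the off-diagonal identity, and the consequences you extract are all right: the two norm identities and $A_1(Cv)=3aCv$ do follow (indeed $\la Cv,v\ra=|A_1^tv|^2-|A_1v|^2=|A_1^tv|^2-4a^2|v|^2$, which equals your expression for $|C|^2$). But at the crucial moment you write ``the hard part will be closing this subcase'' and only gesture at an ``expected route'' through the $\pm\mu$ eigenvalue pairing of $A_1\in\spg(\omega_1)$; that route is never executed, and it is not clear it terminates, since knowing that $2a$ and $3a$ occur as eigenvalues of $A_1$ contradicts nothing by itself. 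The subcase is closable from the identities you already have --- for instance, $\la Cv,v\ra=|C|^2$ plus Cauchy--Schwarz gives $|C|\le|v|^2$, hence $\unm|v|^2+\tr{S(A_1)^2}\le 2a^2$, while $\tr{S(A_1)^2}\ge|S(A_1)v|^2/|v|^2=3a^2+\unc|A_1^tv|^2/|v|^2$ forces $\tr{S(A_1)^2}\ge\tfrac{14}{3}a^2+\unm|v|^2$, a contradiction for $a>0$ --- and the paper takes a shorter path: it pairs the off-diagonal identity with $J_1v$ as well as with $v$, gets $\la A_1^tv,J_1v\ra=0$ when $a\ne0$, writes $A_1^tv=2av+w$ with $w\perp\{v,J_1v\}$, and derives $\tr{S(A_1)^2}|v|^2>|A_1^tv|^2$, contradicting your first norm identity; the case $a=0$ is settled there by expanding $|[A_1,A_1^t]-vv^t|^2\ge 0$. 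As written, this half of (i) is a genuine hole, and declaring the $a=0$ case ``entirely parallel'' inherits it.

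The $v=0$ analysis is correct and essentially the paper's (same reduction to $[C,A_1]+aC=2\lambda A_1$, same pairings with $C$ and with $A_1$), with two remarks. First, the intermediate claim $J_1CJ_1=-C$ has the wrong sign (one has $J_1CJ_1=C$, i.e.\ $J_1CJ_1^{-1}=-C$); the identity $[C,A_1^t]=-2\lambda A_1^t$ you want is nevertheless correct --- just transpose $[C,A_1]=2\lambda A_1$. Second, your $\slg_2(\RR)$-triple argument excluding $\lambda\neq0$ for non-nilpotent $A_1$ is a valid and pleasant alternative; the paper avoids it by noting that every derivation of the solvable algebra $\mu_A$ maps into the nilradical $\ngo$, so $\lambda=0$ from the start (a cheaper substitute is $\tr([D_{22},A_1]A_1^k)=0=\lambda\tr{A_1^{k+1}}$ for all $k$). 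The converse (``if'') directions and the identification of the scalar in \eqref{nilsol} are handled correctly.
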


\begin{remark}\label{allexp}
It is easy to check that all the (non-flat) SCF-solitons obtained in this theorem are strongly algebraic and expanding.  Indeed, if $A$ is normal then $c_1=-a^2$ $c_2=-\unm\tr{S(A_1)^2}$ and so $c=-(a^2+\unm\tr{S(A_1)^2})$, and in the case when $A$ is nilpotent, $P=0$ and $\Ricac=c_2I+D_2$ for $c_2=c=-\unm\left(\tfrac{|[A_1,A_1^t]|^2}{|A_1|^2}+\tr{S(A_1)^2}\right)$.
\end{remark}

\begin{proof}
We first prove part (i).  Since a linear map $D:\ggo\longrightarrow\ggo$ is a derivation of $\mu_A$ if and only if its image is contained in $\ngo$ and $[D|_{\ngo}, A]=0$ (recall that $\ngo$ is the nilradical of $\ggo$ when $A$ is not nilpotent), we obtain from \eqref{RicacA} and \eqref{PA} that $P+\Ricac-cI$ is a derivation of $\mu_A$ for some $c\in\RR$ if and only if $c=-a^2-\unm\left(|v|^2 + \tr S(A_1)^2\right)$ and
\begin{align}
A_1v = &2av, \label{alg1} \\
(A_1^t)^2v + A_1A_1^tv - 2aA_1^tv = &\left(\tfrac{3}{2}|v|^2 + \tr S(A_1)^2+2a^2\right)v, \label{alg2} \\
[A_1,[A_1, A_1^t]] -a[A_1,A_1^t] = & [A_1, (vv^t)^{ac}]. \label{alg3}
\end{align}
By multiplying scalarly equation \eqref{alg2} by $v$ and $J_1v$ and using \eqref{alg1} we respectively obtain,
\begin{align}
|A_1^tv|^2 =& \left(\tfrac{3}{2}|v|^2 + \tr{S(A_1)^2}+2a^2\right)|v|^2, \label{alg4} \\
-4a\la A_1^tv,J_1v\ra =& 0, \label{alg6}
\end{align}
If $a\ne 0$, then $\la A_1^tv,J_1v\ra=0$ by \eqref{alg6} and thus $A_1^tv=2av+w$ with $w$ orthogonal to $\{ v,J_1v\}$.  This implies that $A_1J_1v=-2aJ_1v-J_1w$ and thus
$$
\tr{S(A_1)^2}|v|^2 \geq 8a^2 |v|^2+ |w|^2 > 4a^2|v|^2 + |w|^2 = |A_1^tv|^2,
$$
which contradicts equation \eqref{alg4} unless $v=0$.  It follows that $A_1$ is normal by multiplying scalarly equation \eqref{alg3} by $A_1$.

We therefore assume that $a=0$.  By using that $(vv^t)^{ac}v=\unm|v|^2v$, $(vv^t)^{ac}Jv=-\unm|v|^2Jv$ and $(vv^t)^{ac}$ vanishes on the orthogonal complement of $\{ v,Jv\}$, one obtains
\begin{equation}\label{alg5}
\tr{[A_1,A_1^t](vv^t)^{ac}} = |A_1^tv|^2.
\end{equation}
It now follows from \eqref{alg3}, \eqref{alg5} and \eqref{alg4} that
\begin{align*}
|[A_1,A_1^t]-vv^t|^2 =& -\la A_1,[A_1,[A_1,A_1^t]]\ra + |v|^4 - 2\la A_1A_1^t,vv^t\ra + 2\la A_1^tA_1,vv^t\ra \\
=& -\tr{A_1^t[A_1,(vv^t)^{ac}]} + |v|^4 - 2|A_1^tv|^2 \\
=& \tr{[A_1,A_1^t](vv^t)^{ac}} + |v|^4 - 2|A_1^tv|^2 \\
=& -|A_1^tv|^2 + |v|^4 \\
=& -\left(\tfrac{3}{2}|v|^2 + \tr{S(A_1)^2} \right)|v|^2 + |v|^4  \\
=& (-\unm|v|^2 - \tr{S(A_1)^2})|v|^2,
\end{align*}
and therefore $v=0$ and $A_1$ is normal.

To prove part (ii), we can assume that $A$ is nilpotent by part (i).  Since $v=0$ $P+\Ricac$ has a block diagonal form and so it is easy to check that $D:=P+\Ricac-cI$ is a derivation of $\mu_A$ for some $c\in\RR$ if and only if $[D,\ad{e_{2n}}]=\la De_{2n},e_{2n}\ra\ad{e_{2n}}$, which is equivalent to $[A_1,[A_1,A_1^t]]$ being a scalar multiple of $A_1$.  The multiple can be computed by multiplying scalarly by $A_1^t$, concluding the proof.
\end{proof}

\begin{example}
By defining
$$
A_r:=\left[\begin{array}{c|c}
1&0\\\hline
0& \begin{array}{c|c}
rI&0\\\hline
0&-rI
\end{array} \\
\end{array}\right],
$$
we obtain, in any dimension $\geq 4$, a one-parameter family of pairwise non-equivalent expanding SCF-solitons $(G_{\mu_{A_r}},\omega,g)$ (see Theorem \ref{SA-sol}, (i)) with
$$
P=\left[\begin{array}{c|c|c}
-1 & 0 & 0\\\hline
0 & 0 & 0 \\
\hline
0 & 0 & -1
\end{array}\right], \qquad
\Ricac=\left[\begin{array}{c|c|c}
-(n-1)r^2 & 0 & 0\\\hline
0 & \begin{array}{c|c}
-rI&0\\\hline
0&rI
\end{array} & 0 \\
\hline
0 & 0 & -(n-1)r^2
\end{array}\right].
$$
We note that actually the Lie algebras $\mu_{A_r}$ are pairwise non-isomorphic.
\end{example}

\begin{example}\label{ex-Anilp}
Consider the almost-K\"ahler structure $(G_{\mu_A},\omega,g)$ with $a=0$, $v=0$ and
$$
A_1=\left[\begin{array}{cc} 0&0\\ C&0\end{array}\right], \qquad C^t=C.
$$
It is straightforward to check that the soliton condition \eqref{nilsol} in Theorem \ref{SA-sol}, (ii) holds for $A_1$ if and only if $C^3=\tfrac{\tr{C^4}}{\tr{C^2}}C$.  We can assume, up to isometry, that $C$ is diagonal (see the proof of Proposition \ref{iso-equiv}).  In that case, $(G_{\mu_A},\omega,g)$ is an algebraic SCF-soliton if and only if any diagonal entry of $C$ is either equal to $0$, $1$ or $-1$ (compare with Example \ref{nilpmat-exa}).
\end{example}

In what follows, we study under what conditions on $A$ the symplectic Lie group $(G_{\mu_A},\omega)$ admits a compatible left-invariant metric such that the corresponding almost-K\"ahler structure is a SCF-soliton.  According to the observation made at the end of Section \ref{sec-LG} that $\Spe(\omega)\cdot\mu_A$ parameterizes the set of all compatible metrics on $(G_{\mu_A},\omega)$ and Proposition \ref{iso-equiv}, (i), this is equivalent to the existence of a matrix $B$ satisfying the conditions in the proposition and such that $\mu_B$ is a SCF-soliton.  We note that the uniqueness up to equivalence of the SCF-soliton metric can be analyzed by using  Proposition \ref{iso-equiv}, (ii).

The following corollary of Theorem \ref{SA-sol}, (i) therefore follows from the fact that a matrix is semisimple (always understood over the complex numbers) if and only if it is conjugate to a normal matrix.

\begin{corollary}\label{SA-cor}
If $A$ is neither nilpotent nor semisimple, then the Lie group $G_{\mu_A}$ does not admit any algebraic SCF-soliton.
\end{corollary}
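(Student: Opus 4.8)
The plan is to derive Corollary~\ref{SA-cor} directly from Theorem~\ref{SA-sol} together with Proposition~\ref{iso-equiv}, using the observation recorded at the end of Section~\ref{sec-LG} that, for a fixed symplectic Lie group $(G_{\mu_A},\omega)$, the orbit $\Spe(\omega)\cdot\mu_A$ parameterizes precisely the set of left-invariant metrics on $G_{\mu_A}$ compatible with $\omega$. So ``$(G_{\mu_A},\omega)$ admits an algebraic SCF-soliton metric'' is equivalent to the statement that some $\mu_B$ in the orbit $\Spe(\omega)\cdot\mu_A$ is an algebraic SCF-soliton. Since $A$ is assumed not nilpotent, the hypothesis is inherited by every conjugate-up-to-scaling matrix $B$, so $\mu_B$ is never nilpotent either, and we are squarely in the situation covered by Theorem~\ref{SA-sol}(i).

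First I would invoke Theorem~\ref{SA-sol}(i): for a non-nilpotent $B$, $\mu_B$ is an algebraic SCF-soliton if and only if $B$ is a normal matrix. Next I would translate ``$\mu_B\in\Spe(\omega)\cdot\mu_A$'' into a condition on $A$ and $B$ as matrices: by Proposition~\ref{iso-equiv}(i) (or by the elementary remark that $\mu_B\simeq\mu_A$ iff $B$ is conjugate to $A$ up to a nonzero scalar), the existence of such a $B$ forces $B=\alpha\,gAg^{-1}$ for some $\alpha\neq0$ and $g\in\Gl(\ggo)$ — in fact $g$ lands inside the parabolic-type subgroup of $\Spe(\omega)$ described in the proof of Proposition~\ref{iso-equiv}, but for the non-existence direction only the $\Gl$-conjugacy-up-to-scaling is needed. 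Thus if some compatible metric yielded an algebraic SCF-soliton, then $A$ would be conjugate, up to a nonzero scalar, to a normal matrix.

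The final step is the purely linear-algebraic fact that a matrix is conjugate to a normal matrix (over $\CC$) precisely when it is semisimple: indeed, a normal matrix is diagonalizable over $\CC$, and scaling and $\Gl$-conjugation preserve diagonalizability, so $A$ conjugate-up-to-scaling to a normal matrix implies $A$ is diagonalizable, i.e.\ semisimple; conversely a semisimple matrix is diagonal in some basis and a diagonal matrix is normal. Hence if $A$ is neither nilpotent nor semisimple, no $B$ as above can exist, and $G_{\mu_A}$ admits no algebraic SCF-soliton — which is exactly the claim. I do not expect any real obstacle here; the only point requiring a moment's care is making sure the scaling factor $\alpha$ in the conjugacy is harmless (it is, since $\alpha B$ is normal iff $B$ is), and that ``semisimple'' is understood over $\CC$, as already flagged parenthetically in the text preceding the corollary.
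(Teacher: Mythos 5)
Your argument is correct and is essentially the paper's own proof: the authors likewise reduce via the $\Spe(\omega)\cdot\mu_A$ parametrization of compatible metrics and Proposition~\ref{iso-equiv}(i) to the existence of a normal matrix $B$ conjugate to $A$ up to nonzero scaling (using Theorem~\ref{SA-sol}(i)), and then invoke the same linear-algebra fact that a matrix is semisimple over $\CC$ if and only if it is conjugate to a normal matrix. The only difference is that you spell out the harmlessness of the scaling factor, which the paper leaves implicit.
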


We now give some existence results for SCF-solitons.

\begin{proposition}\label{existss}
If $v=0$ and $A$ is semisimple, then the symplectic Lie group $(G_{\mu_{A}},\omega)$ admits a compatible metric $g$ such that the almost-K\"ahler structure $(\omega,g)$ is an algebraic SCF-soliton.  Moreover, any other algebraic SCF-soliton $(\widetilde{\omega},\widetilde{g})$ on $G_{\mu_A}$ such that the symplectic structure $\widetilde{\omega}$ is isomorphic to $\omega$ is equivalent to $(\omega,g)$ up to scaling.
\end{proposition}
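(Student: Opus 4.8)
\emph{The plan.} I would work throughout with the parametrization recalled in Section \ref{sec-LG}: by Proposition \ref{iso-equiv}, (i), a left-invariant metric compatible with $\omega$ on $G_{\mu_A}$ is the same datum as a matrix $B=\left[\begin{smallmatrix} b&w^t\\ 0&B_1\end{smallmatrix}\right]$ (with $b\geq 0$, $B_1\in\spg(\omega_1)$) for which $(\ggo,\mu_B,\omega)$ is isomorphic to $(\ggo,\mu_A,\omega)$, and such a metric is an algebraic SCF-soliton exactly when $\mu_B$ is one. Since $v=0$ we have $A=\left[\begin{smallmatrix} a&0\\ 0&A_1\end{smallmatrix}\right]$, and semisimplicity of $A$ is equivalent to that of $A_1$. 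The case $A=0$ is immediate: the structure is flat, every compatible metric is then flat hence trivially an algebraic SCF-soliton, and $\Spe(\omega)$ already acts transitively on compatible metrics. So I assume $A\neq 0$ below, whence $\mu_A$ is not nilpotent.

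\emph{Existence.} The crucial input is the Lie-theoretic fact that a semisimple element of $\spg(n-1,\RR)$ is $\Spe(n-1,\RR)$-conjugate to a normal matrix. I would justify it by a minimal-vector argument: with respect to the Cartan decomposition $\spg(\omega_1)=\kg\oplus\pg$ into $g_1$-skew-symmetric and $g_1$-symmetric elements, a matrix is normal precisely when its $\kg$- and $\pg$-parts commute, and these are exactly the critical points -- automatically minima, by convexity along $\exp(\pg)$-directions -- of $\varphi\mapsto|\varphi A_1\varphi^{-1}|^2$ on $\Spe(\omega_1)$; since $A_1$ is semisimple its $\Spe(\omega_1)$-orbit is closed, so it contains such a minimal vector. (Alternatively, one invokes the explicit symplectic normal form of a semisimple element of $\spg(n-1,\RR)$.) Pick $\varphi_1\in\Spe(\omega_1)$ with $B_1:=\varphi_1A_1\varphi_1^{-1}$ normal and set $B:=\left[\begin{smallmatrix} a&0\\ 0&B_1\end{smallmatrix}\right]$, which is then a normal matrix, and not nilpotent because $A_1$ is semisimple and $A\neq 0$. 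Proposition \ref{iso-equiv}, (i) (applied with $\alpha=1$, $u=0$, so $w=v=0$) gives $(\ggo,\mu_B,\omega)\cong(\ggo,\mu_A,\omega)$, and then Theorem \ref{SA-sol}, (i) shows $\mu_B$ is an algebraic SCF-soliton. Transporting along $\varphi\in\Spe(\omega)$ with $\mu_B=\varphi\cdot\mu_A$, the metric $g:=\varphi^*g_0$ on $G_{\mu_A}$ is the desired one.

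\emph{Uniqueness.} Let $(\widetilde\omega,\widetilde g)$ be an algebraic SCF-soliton on $G_{\mu_A}$ with $\widetilde\omega$ isomorphic to $\omega$. Pulling back along a Lie algebra automorphism of $\mu_A$ sending $\widetilde\omega$ to $\omega$, and then applying Proposition \ref{formA} (the resulting structure still has a codimension-one abelian ideal), I may assume $\widetilde\omega=\omega$, $\widetilde g=g_0$, and the structure is $(\ggo,\mu_C,\omega,g_0)$ with $C$ of the shape given in Proposition \ref{formA}, still isomorphic as a symplectic Lie algebra to $(\ggo,\mu_A,\omega)$ and still an algebraic soliton. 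Since $\mu_A$, hence $\mu_C$, is not nilpotent, $C$ is not nilpotent, so Theorem \ref{SA-sol}, (i) forces $C$ normal; its off-diagonal block then vanishes, $C=\left[\begin{smallmatrix}\alpha a&0\\ 0&C_1\end{smallmatrix}\right]$ with $C_1$ normal, and Proposition \ref{iso-equiv}, (i) gives $\alpha\neq0$ and $C_1=\alpha\varphi_1 A_1\varphi_1^{-1}$ for some $\varphi_1\in\Spe(\omega_1)$, with $\alpha>0$ if $a>0$. Put $r:=|\alpha|$. Then $C_1$ and $rB_1$ -- the sign of $C_1$ flipped if $\alpha<0$, which happens only when $a=0$ -- are normal elements of $\spg(\omega_1)$ lying in a common $\Spe(\omega_1)$-orbit, hence by the uniqueness part of the Kempf--Ness theorem (minimal vectors of a closed orbit form a single $\U(\omega_1,g_1)$-orbit) they are $\U(\omega_1,g_1)$-conjugate. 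As the top-left entries of $C$ and $rB$ coincide and both off-diagonal blocks vanish, Proposition \ref{iso-equiv}, (ii) yields $(\ggo,\mu_C,\omega,g_0)\cong(\ggo,\mu_{rB},\omega,g_0)$; and conjugating by the homothety $r^{-1}I$ identifies the latter with $(\ggo,\mu_B,r^{-2}\omega,r^{-2}g_0)$, i.e. -- through the isomorphism of the existence step -- with $(G_{\mu_A},r^{-2}\omega,r^{-2}g)$. Hence $(\widetilde\omega,\widetilde g)$ is equivalent to $(\omega,g)$ up to scaling.

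\emph{Main obstacle.} Apart from that one Lie-theoretic fact -- for the symmetric pair $(\Spe(n-1,\RR),\U(n-1))$, every closed conjugation orbit of semisimple elements contains a normal matrix, and all normal matrices in such an orbit lie in a single $\U(n-1)$-orbit -- the argument is bookkeeping with Propositions \ref{formA}, \ref{iso-equiv} and Theorem \ref{SA-sol}. That fact is precisely real Kempf--Ness theory (equivalently, the symplectic spectral theorem), so I expect no serious technical difficulty, but it is the one non-formal ingredient and is the real content of both halves of the statement.
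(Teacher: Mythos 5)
Your proposal is correct and follows essentially the same route as the paper's proof: conjugate $A_1$ inside $\Spe(\omega_1)$ to a normal matrix (the paper simply cites this as well-known, where you supply the Kempf--Ness/minimal-vector justification), apply Theorem \ref{SA-sol} for existence, and for uniqueness use that the normal matrices in the $\Spe(\omega_1)$-conjugacy class form a single $\U(\omega_1,g_1)$-orbit together with Proposition \ref{iso-equiv}, (ii) and the scaling remark. Your extra care with the $A=0$ case and the sign of $\alpha$ when $a=0$ is a minor refinement of the same argument.
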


\begin{remark}
The uniqueness statement in the proposition does not imply that there is a unique algebraic SCF-soliton $g$ on $(G_{\mu_A},\omega)$ up to equivalence (see Remark \ref{scaling}).
\end{remark}

\begin{proof}
If $A$ is semisimple then $A_1$ is a semisimple element in $\spg(\omega_1)$ and it is well-known that so there exists $\vp_1\in\Spe(\omega_1)$ such that $\vp_1A_1\vp_1^{-1}$ is normal.  This implies that $\vp\cdot\mu_A$ is a an algebraic SCF-soliton, where $\vp\in\Spe(\omega)$ is defined by $\vp|_{\{ e_1,e_{2n}\}}=id$, $\vp|_{\ngo_1}=\vp_1$ (see Theorem \ref{SA-sol}, (ii)).

The uniqueness up to equivalence and scaling follows from the fact that the subset of normal matrices in the $\Spe(\omega_1)$-conjugacy class of $A_1$ consists of a single $\U(\omega_1,g_1)$-orbit.  Indeed, if $\mu_B=\psi\cdot\mu_A$ with $\psi\in\Spe(\omega)$ is another algebraic SCF-soliton, then from Proposition \ref{iso-equiv}, (i) we obtain that $b=\alpha a$ and $B_1=\alpha \psi_1A_1\psi_1^{-1}$.  In particular $B$ is not nilpotent and hence $w=0$ and $B_1$ is normal by Theorem \ref{SA-sol}, (i).  This implies that there exists $h_1\in\U(\omega_1,g_1)$ such that $B_1=\alpha h_1\vp_1A_1\vp_1^{-1}h_1^{-1}$ and thus $\mu_{B}$ is equivalent to $\alpha\vp\cdot\mu_A$ (see Proposition \ref{iso-equiv}, (ii)), which is by Remark \ref{scaling} equivalent to the almost-K\"ahler structure $(\alpha^{-2}\vp^*\omega,\alpha^{-2}\vp^*g)$ on $G_{\mu_A}$, concluding the proof.
\end{proof}

The case $v=0$ and $A$ nilpotent is more involved, we shall need some results from \cite{Jbl2} on geometric invariant theory concerning moment maps for real representations of real reductive Lie groups (see e.g. \cite[Appendix]{cruzchica} for more information).

\begin{proposition}\label{existnil}
If $v=0$ and $A$ is nilpotent, then the symplectic Lie group $(G_{\mu_{A}},\omega)$ admits a compatible metric $g$ such that the almost-K\"ahler structure $(\omega,g)$ is an algebraic SCF-soliton.
\end{proposition}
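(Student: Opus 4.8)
The plan is to produce, for each nilpotent $A$ (equivalently, each nilpotent $A_1\in\spg(\omega_1)$), a conjugate $B_1=\vp_1 A_1\vp_1^{-1}$ with $\vp_1\in\Spe(\omega_1)$ satisfying the nilpotent soliton equation \eqref{nilsol}, namely $[B_1,[B_1,B_1^t]]=-\big(|[B_1,B_1^t]|^2/|B_1|^2\big)B_1$. By Theorem \ref{SA-sol}, (ii), such a $B_1$ produces an algebraic SCF-soliton $\mu_B$ on $G_{\mu_A}\cong G_{\mu_B}$, and by Proposition \ref{iso-equiv}, (i) (with $a=b=0$, $v=w=0$, $\alpha$ free, $u$ arbitrary) the corresponding symplectic Lie group $(G_{\mu_B},\omega)$ is isomorphic to $(G_{\mu_A},\omega)$, so the soliton metric can be transported to $(G_{\mu_A},\omega)$. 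Thus the whole problem is the existence of such a "minimal" representative of the $\Spe(\omega_1)$-conjugacy class of $A_1$.

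The key observation is that this is exactly a moment-map / closed-orbit problem for the real reductive group $\Spe(\omega_1)\simeq\Spe(n-1,\RR)$ acting linearly on its own Lie algebra $\spg(\omega_1)$ by the adjoint action, with the inner product on $\spg(\omega_1)$ coming from $\la X,Y\ra=\tr XY^t$ and the corresponding moment map $m(X)=[X,X^t]$ (up to normalization). Equation \eqref{nilsol} says precisely that $m(B_1)$, projected back via the infinitesimal action, is a nonnegative multiple of $B_1$ — i.e. $B_1$ is a critical point of the norm-squared functional $\|m\|^2$ restricted to the orbit, or equivalently, in the language of \cite{Jbl2}, that $B_1$ lies on the critical set and $\Spe(\omega_1)\cdot B_1$ (or a suitable limit orbit) has the minimal-norm property. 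First I would set up the action of the real reductive group $G=\Spe(\omega_1)$ with maximal compact $K=\U(\omega_1,g_1)$ and Cartan decomposition, identify the moment map, and note that $\ad$-nilpotence of $A_1$ is what we need to know about the orbit structure. Then I would invoke the Kempf–Ness / Ness-type theory for real representations of real reductive groups developed in \cite{Jbl2} (cf. \cite[Appendix]{cruzchica}): the orbit $G\cdot A_1$ has in its closure a unique closed orbit, and one shows that for a nilpotent element the relevant minimal vector exists — it is a critical point of $\|m\|^2$ along the orbit, which translates back into \eqref{nilsol}. The scaling freedom ($\alpha$ in Proposition \ref{iso-equiv}) is exactly the $\RR^{>0}$-homogeneity of the functional, so one is free to normalize.

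The main obstacle is the passage from "critical point of the moment-map norm exists on (the closure of) the $G$-orbit" to "critical point exists on the $G$-orbit itself, i.e. a genuine conjugate $B_1$ of $A_1$." For a nilpotent $\ad$-orbit the orbit is typically not closed, and a priori the minimizing sequence could degenerate; one must use the specific structure here — that $A_1$ is $\ad$-nilpotent, so $0\in\overline{G\cdot A_1}$ is the unique closed orbit in the closure, but $A_1\neq 0$ — together with the fact (from GIT for real reductive groups) that the critical set meets every orbit whose closure we care about, and that along a nilpotent orbit the gradient flow of $-\|m\|^2$ converges within the orbit to a critical point. Concretely I expect to argue: the negative gradient flow of $f(\vp)=|[{\vp}A_1{\vp}^{-1},({\vp}A_1{\vp}^{-1})^t]|^2/|{\vp}A_1{\vp}^{-1}|^4$ on the homogeneous space $G/K$ stays in a compact region after rescaling (because $|{\vp}A_1{\vp}^{-1}|$ is controlled — nilpotency prevents the norm from collapsing in the bad directions), hence converges to a critical point $B_1$ in the $K$-orbit sense, which satisfies \eqref{nilsol}. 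Once $B_1$ is produced, Theorem \ref{SA-sol}, (ii) and Proposition \ref{iso-equiv}, (i) finish the argument as described above.
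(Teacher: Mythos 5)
Your overall strategy is the one the paper follows: translate \eqref{nilsol} into the statement that $A_1$ is (conjugate to) a critical point of the norm squared of the moment map $m(B)=[B,B^t]/|B|^2$, and then appeal to the real GIT of \cite{Jbl2} to produce such a critical point inside the $\Spe(\omega_1)$-conjugacy class. The reduction via Theorem \ref{SA-sol}, (ii) and Proposition \ref{iso-equiv}, (i) is also exactly right. The gap is in the existence step, which you correctly single out as the main obstacle but do not actually resolve. Your proposed argument is that the negative gradient flow of the normalized functional ``stays in a compact region after rescaling because nilpotency prevents the norm from collapsing in the bad directions.'' This is unsubstantiated and, as a heuristic, points the wrong way: nilpotency of $A_1$ is precisely the condition under which $0\in\overline{\Spe(\omega_1)\cdot A_1}$, so the orbit is as non-closed as possible and the norm \emph{can} collapse; the general Kirwan--Ness theory only guarantees that the gradient flow converges to a critical point in the orbit \emph{closure}, which for a nilpotent element could a priori be $0$. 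What saves the day is a special structural fact about nilpotent adjoint orbits, and that fact has to be proved or cited, not asserted.

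The paper supplies it in two steps. First, \cite[Theorem 4.2]{finding} (see also \cite[Proposition 5.4]{Jbl2}) exhibits an explicit critical point of $F=|m|^2$ in every nilpotent $\Gl_{2n-2}(\RR)$-conjugacy class, namely the Jordan form with superdiagonal entries $b_i=\sqrt{i(k-i)}$ in each $k\times k$ block (these come from normalized $\slg_2$-triples, which is the structural input your gradient-flow sketch is missing). Second, one cannot stop there, because a single $\Gl_{2n-2}(\RR)$-class may intersect $\spg(n-1,\RR)$ in \emph{several} $\Spe(n-1,\RR)$-orbits (see Example \ref{nilpmat-exa}), and the soliton must be found on the given symplectic orbit. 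This is handled by \cite[Theorem 3.1]{Jbl2} (the intersection is a finite union of $\Spe(n-1,\RR)$-orbits) together with \cite[Corollary 3.4]{Jbl2}, which guarantees that \emph{each} of these suborbits contains a critical point of the $\Spe(n-1,\RR)$-moment map, and that this moment map coincides with the $\Gl_{2n-2}(\RR)$-one on $\spg(n-1,\RR)$, so the critical point still satisfies \eqref{nilsol}. Your direct approach on the $\Spe(\omega_1)$-orbit could in principle be made to work, but it would require proving the real-group analogue of the existence of minimal vectors on nilpotent orbits (via Kostant--Sekiguchi/$\slg_2$-triple arguments), which is exactly the content of the results you would need to cite or reprove.
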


\begin{proof}
It is known that condition \eqref{nilsol} holds for a nilpotent matrix $A_1$ if and only if $A_1$ is a critical point of the functional square norm of the moment map $F(B):=|m(B)|^2$.  Here $m:\glg_{2n-2}(\RR)\longrightarrow\sym(2n-2)$ is the moment map for the $\Gl_{2n-2}(\RR)$-action by conjugation on $\glg_{2n-2}(\RR)$ and is given by $m(B)=\frac{[B,B^t]}{|B|^2}$.  It follows from \cite[Theorem 4.2]{finding} (see also \cite[Proposition 5.4]{Jbl2}) that each nilpotent conjugacy class contains a critical point whose $k\times k$-Jordan blocks are given by
$$
\left[\begin{array}{cccc}
0&b_1&      &       \\
 &0  &\ddots&       \\
 &   &\ddots&b_{k-1} \\
 &   &      &0
\end{array}\right], \qquad b_i=\sqrt{i(k-i)}.
$$
These special matrices are the minima of $F$ on the conjugacy class.  From the general theory of moment maps we know that critical points of $F$ are unique up to the action of the maximal compact subgroup $\Or(2n-2)$ of $\Gl_{2n-2}(\RR)$.

Following the notation of \cite{Jbl2}, we take
$$
G=\Gl_{2n-2}(\RR), \quad V=\glg_{2n-2}(\RR), \quad H=\Spe(n-1,\RR), \quad W=\spg(n-1,\RR).
$$
By \cite[Theorem 3.1]{Jbl2} we have that the intersection of the $\Gl_{2n-2}(\RR)$-conjugacy class of each $A_1\in\spg(n-1,\RR)$ with $\spg(n-1,\RR)$ is a finite union of $\Spe(n-1)$-orbits (more than one in general, see e.g. \eqref{nilpmat}).  Moreover, it follows from \cite[Corollary 3.4]{Jbl2} that each of these $\Spe(n-1,\RR)$-conjugacy classes contains a unique up to $\U(n-1)$-conjugation critical point $\widetilde{A_1}$ of the $\Spe(n-1,\RR)$-moment map, which coincides with the $\Gl_{2n-2}(\RR)$-moment map on $\spg(n-1)$ and so $\widetilde{A_1}$ satisfies condition \eqref{nilsol}.  Since $\mu_{\widetilde{A_1}}=\vp\cdot\mu_A$ for a suitable $\vp\in\Spe(\omega)$ we conclude that $(G_{\mu_A},\omega)$ admits an algebraic SCF-soliton by Theorem \ref{SA-sol}, (ii), as was to be shown.
\end{proof}

\subsection{Dimension $4$}\label{muA-dim4-sec}
We now consider the almost abelian case when $\dim{\ggo}=4$, i.e. $n=2$.  If $\{ e_1, \dots, e_4\}$ is the canonical basis of $\ggo\equiv \RR^4$, then we fix
$$
\omega = e_1 \wedge e_4 + e_2 \wedge e_3, \qquad J= \left[\begin{smallmatrix} &  &  & -1 \\  & & -1 &  \\  & 1 &  & \\ 1& & & \end{smallmatrix}\right].
$$
Since the $2\times 2$ matrix $A_1$ is symplectic if and only if $\tr{A_1}=0$ (see Proposition \ref{formA}), the matrices $A$ have the form
\begin{equation}\label{Adim4}
A= \left[\begin{matrix}a & b & c \\  0 & d & e \\ 0 & f &-d \end{matrix}\right], \qquad a,b,c,d,e,f  \in \RR.
\end{equation}

It follows from Theorem \ref{SA-sol}, (i) that if $A$ is not nilpotent, then $\mu_A$ is an algebraic SCF-soliton if and only if $A$ has one of the following two forms:
$$
\left[\begin{matrix}a & 0 & 0 \\  0 & d & e \\ 0 & e &-d \end{matrix}\right], \qquad \left[\begin{matrix}a & 0 & 0 \\  0 & 0 & e \\ 0 & -e &0 \end{matrix}\right].
$$

\begin{lemma}\label{r40-r4-1}
The Lie algebras $\rg_{4,0}$ and $\rg_{4,-1}$ (see Table \ref{n1}) do not admit any algebraic SCF-soliton.
\end{lemma}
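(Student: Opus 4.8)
The plan is to recognize both $\rg_{4,0}$ and $\rg_{4,-1}$ as members of the almost-abelian family $\mu_A$ studied in this section and then to apply Corollary \ref{SA-cor} directly. Concretely, I would first read off from Table \ref{n1} the structure equations of these two Lie algebras and check in each case that the Lie algebra possesses a codimension-one abelian ideal; by Proposition \ref{formA}, every left-invariant almost-K\"ahler structure on the associated simply connected Lie group is then equivalent to $(\ggo,\mu_A,\omega,g)$ for some matrix $A$ of the shape \eqref{Adim4}. Computing the conjugacy class of $A=\ad{e_4}|_{\ngo}$ from the structure equations, one finds that, up to conjugation and the scaling built into the isomorphism $\mu_A\simeq\mu_B$, the matrix $A$ for $\rg_{4,0}$ has eigenvalues $1,0,0$ with a single $2\times2$ Jordan block (at the eigenvalue $0$), while the matrix $A$ for $\rg_{4,-1}$ has eigenvalues $1,1,-1$ with a single $2\times2$ Jordan block (at the eigenvalue $1$); these are in fact the only two non-semisimple, non-nilpotent conjugacy classes compatible with the constraint $A_1\in\spg(\omega_1)$ (i.e.\ $\tr{A_1}=0$) coming from Proposition \ref{formA}.

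The key observation is then that in both cases $A$ is \emph{neither nilpotent} (it carries the nonzero eigenvalue $1$) \emph{nor semisimple} (its Jordan form over $\CC$ has a $2\times2$ block, hence $A$ is not diagonalizable). Since nilpotency and semisimplicity are invariant under conjugation and nonzero scaling, and the conjugacy class of $A$ up to scaling is a complete isomorphism invariant of $\mu_A$, these two properties are intrinsic to $\rg_{4,0}$ and $\rg_{4,-1}$ and do not depend on the left-invariant symplectic form or on the adapted basis used to present them in the form $\mu_A$. Corollary \ref{SA-cor} now applies verbatim and yields that $G_{\mu_A}$ admits no algebraic SCF-soliton; moreover, since the set of compatible left-invariant metrics on $(G_{\mu_A},\omega)$ is parametrized by the orbit $\Spe(\omega)\cdot\mu_A$ and every $\mu_B$ in this orbit has $B$ in the same (non-semisimple, non-nilpotent) conjugacy class as $A$, the conclusion holds for \emph{every} compatible metric and \emph{every} left-invariant symplectic structure, which is exactly the statement of the lemma.

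The computational part --- extracting the structure equations and the Jordan form of $A$ --- is routine, so I expect the only genuinely delicate point to be the verification of non-semisimplicity: because both matrices have a repeated eigenvalue, one must confirm that the corresponding eigenspace is truly one-dimensional (equivalently, that the relevant off-diagonal data in \eqref{Adim4} does not degenerate after the normalizations), rather than prematurely concluding that $A$ is diagonalizable; this is transparent for the explicit Jordan forms above, and it is also the step where a misidentification of the Lie algebras from Table \ref{n1} would be caught. A secondary point worth recording is that, by the classification of non-semisimple, non-nilpotent conjugacy classes sketched in the first paragraph, $\rg_{4,0}$ and $\rg_{4,-1}$ are the only $4$-dimensional almost-abelian symplectic Lie algebras to which Corollary \ref{SA-cor} can possibly apply (the remaining ones being either semisimple, hence covered by Proposition \ref{existss}, or nilpotent, hence covered by Proposition \ref{existnil}), so this lemma exhausts what that corollary can rule out in this class.
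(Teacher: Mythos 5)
Your proposal is correct and follows essentially the same route as the paper: the authors likewise identify $\rg_{4,0}$ and $\rg_{4,-1}$ as $\mu_A$ for explicit matrices $A$ (with Jordan types matching yours up to the sign/scaling ambiguity allowed by the isomorphism criterion), observe that these are neither nilpotent nor semisimple, and invoke Corollary \ref{SA-cor}. Your additional remarks on why the conclusion is independent of the chosen symplectic form and compatible metric, and on these being the only two conjugacy classes reachable by the corollary, are consistent with (and merely make explicit) the surrounding discussion in the paper.
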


\begin{proof}
These Lie algebras are isomorphic to $\mu_A$, where $A$ is respectively given by
$$
\left[\begin{matrix}1 & 0 & 0 \\  0 & 0 & 1 \\ 0 & 0 &0 \end{matrix}\right], \qquad \left[\begin{matrix}1 & 0 & 0 \\  0 & -1 & 1 \\ 0 & 0 &-1 \end{matrix}\right].
$$
The result therefore follows from Corollary \ref{SA-cor}, as these matrices are neither semisimple nor nilpotent.
\end{proof}

Any other $4$-dimensional symplectic Lie algebra isomorphic to a $\mu_A$ does admit an algebraic SCF-soliton which has been explicitly given in Table \ref{n3}.  This follows from a direct application of Theorem \ref{SA-sol} and Propositions \ref{existss}, \ref{existnil}, with the only exception of $\ngo_4$.

\subsection{Bracket flow}\label{BFmuA-sec}
We study in this section bracket flow evolution of almost-K\"ahler structures $(G_{\mu_A},\omega,g)$ (see Section \ref{sec-BF}).  We first introduce the following notation for each matrix $A$ as in Proposition \ref{formA}:
$$
Q_A:=P_{\mu_A}+\Ricci_{\mu_A}^{ac}, \qquad \lambda_A:=\delta_{\mu_A}\left( Q_A \right), \qquad r:=\unc A_1v-\tfrac{a}{2}v, \qquad c:=\unm A_1^tv+av,
$$
$$
\alpha_1:=-a^2+\unm(|v|^2 + \tr{S(A_1)^2}), \qquad \alpha_2:=-a^2-\unm(|v|^2 + \tr{S(A_1)^2}).
$$
It follows from \eqref{RicacA} and \eqref{PA} that
\begin{equation}\label{QA}
Q_A=\left[\begin{array}{c|c|c}
\alpha_1 & r^t-c^t & 0\\\hline
 & & \\
r & Q_1 & J_1(r-c) \\
 & & \\ \hline
0 & (J_1r)^t & \alpha_2
\end{array}\right],
\end{equation}
where $Q_1:=\unm [A_1, A_1^t]- aS(A_1) - \unm(vv^t)^{ac}$, and a straightforward computation gives
\begin{align}
\lambda_A(e_1,e_i) =& a\la r,J_1e_i\ra e_1, \qquad\forall i \ne 1, 2n, \label{bf1}
\\ \notag\\
\lambda_A(e_i,e_j) =& -\la r,J_1e_i\ra (v_je_1+A_1e_j) + \la r,J_1e_j\ra (v_ie_1+A_1e_i), \qquad \forall i,j \ne 1, 2n, \label{bf2}
\\ \notag \\
\lambda_A(e_{2n},e_1) =& \left(-\unm(2a^2+|v|^2 + \tr{S(A_1)^2})a +\la r,v\ra\right)e_1 + A_1r -ar, \label{bf3}
\\ \notag \\
\lambda_A(e_{2n},e_i) =& -\unm(2a^2+|v|^2 + \tr{S(A_1)^2})Ae_i + [A,Q]e_i, \notag
\\
=&   \left\langle -(|v|^2 + \tr{S(A_1)^2})v + Q_1v + a(r-c) - A_1^t(r-c),e_i\right\rangle e_1 \label{bf4}
\\
& -\unm(2a^2+|v|^2 + \tr{S(A_1)^2})A_1e_i + [A_1,Q_1]e_i-v_ir,   \qquad\forall i \ne 1, 2n. \notag
\end{align}

In order to get an invariant family under the bracket flow $\ddt\mu_A=\lambda_A$, we need to have $\lambda_A = \mu_B$ for some matrix $B$ of the same form as $A$ in Proposition \ref{formA} for all $t$, that is, $\lambda_A(\ngo_1,\ngo_1)=0$ (see \eqref{bf1} and \eqref{bf2}) and $\lambda_A(e_{2n},e_1)\in\RR e_1$ (see \eqref{bf3}).  Note that conditions $\lambda_A(e_{2n},\ngo)\subset\ngo$ and $\ad_{\lambda_A}{e_{2n}|_{\ngo_1}}\in\spg(\omega_1)$ automatically hold (see \eqref{bf4}).  When $v=0$ this clearly holds and the evolution will be studied below.

We therefore assume that $v\ne 0$.  If $r\ne 0$ then $a=0$ by \eqref{bf1}, and since the vectors $-v_je_i+v_ie_j$, $2\leq i,j\leq 2n-1$, generate the orthogonal complement $v^\perp$ of $v$ it follows from \eqref{bf2} that $r\in\RR J_1v$.  Moreover, \eqref{bf2} implies that $A_1v^\perp=0$ if $r\ne 0$, and so $\mu_A$ is isomorphic to $\hg_3\oplus\RR^{2n-3}$ as a Lie algebra and $(G_{\mu_A},\omega,g)$ is equivalent to its unique almost-K\"ahler structure (see Example \ref{kodaira}).

On the other hand, if $r=0$, then the four equations above give that $\lambda = \mu_B$ and the bracket flow equation for $A=A(t)$ would become
\begin{align}
  a' =& -\unm(2a^2 + |v|^2 + \tr{S(A_1)^2})a, \label{eva}
  \\ \notag \\
  v' =& -(2a^2 + \tfrac{5}{4}|v|^2 + \tr{S(A_1)^2})v + \unm A_1A_1^tv + \unm(A_1^t)^2v - aA_1^tv, \label{evv}
  \\ \notag \\
  A_1' =& -\unm(2a^2 + |v|^2 + \tr{S(A_1)^2})A_1 +[A_1,Q_1] \label{evA1}
  \\
  =& -\unm(2a^2 + |v|^2 + \tr{S(A_1)^2})A_1 + \unm[A_1,[A_1, A_1^t]] \notag
  \\
  &-\tfrac{a}{2}[A_1,A_1^t] - \unm[A_1,(vv^t)^{ac}], \label{evA2}
\end{align}
Unfortunately, condition $r=0$, which is equivalent to $A_1v=2av$, is not invariant under this ODE system and hence we need to consider smaller subsets to get invariant families under the bracket flow in the case $v\ne 0$.

\begin{proposition}\label{invf1}
The family $\left\{\mu_A : a=0,\quad A_1v=0, \quad A_1^tv=0\right\}$ is invariant under the bracket flow, which becomes equivalent to
\begin{align*}
    v' =& -(\tfrac{5}{4}|v|^2 + \tr{S(A_1)^2})v,
  \\
  A_1' =& -\unm(|v|^2 + \tr{S(A_1)^2})A_1 + \unm[A_1,[A_1, A_1^t]].
\end{align*}
\end{proposition}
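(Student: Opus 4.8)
The plan is to check directly that the stated conditions cut out an invariant set for the bracket flow system \eqref{eva}--\eqref{evA2}, and then to simplify that system under these conditions. First I would verify invariance of $a=0$: the right-hand side of \eqref{eva} is $-\unm(|v|^2+\tr{S(A_1)^2})a$, which vanishes identically on $\{a=0\}$, so once $a=0$ it stays $0$. This reduces the system to \eqref{evv} and \eqref{evA2} with $a=0$, namely $v'=-(\tfrac54|v|^2+\tr{S(A_1)^2})v+\unm A_1A_1^tv+\unm(A_1^t)^2v$ and $A_1'=-\unm(|v|^2+\tr{S(A_1)^2})A_1+\unm[A_1,[A_1,A_1^t]]-\unm[A_1,(vv^t)^{ac}]$. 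Under the additional constraints $A_1v=0$ and $A_1^tv=0$ the terms $A_1A_1^tv$ and $(A_1^t)^2v$ both vanish, and $(vv^t)^{ac}$ acts as $\pm\unm|v|^2$ on $\la v, J_1v\ra$ and kills its orthogonal complement, so $[A_1,(vv^t)^{ac}]$ — applied to anything — only involves $v$, $J_1v$ and their images under $A_1$, all of which interact trivially with $A_1$ once $A_1v=A_1^tv=0$ (note $A_1^tv=0$ together with $J_1A_1=-A_1^tJ_1$ gives $A_1 J_1 v = -J_1 A_1^t v = 0$, and similarly $A_1^t J_1 v = 0$); hence $[A_1,(vv^t)^{ac}]=0$. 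That produces exactly the two displayed evolution equations.

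The remaining, and main, point is to show the constraints $A_1v=0$ and $A_1^tv=0$ are themselves preserved by this reduced flow. The plan is to compute $\ddt(A_1v)$ and $\ddt(A_1^tv)$ using the reduced equations and check each is a linear combination of $A_1v$, $A_1^tv$ (and possibly $v$, $J_1v$ tracked through $A_1, A_1^t$), so that the pair $(A_1v, A_1^tv)$ satisfies a linear ODE and stays at the origin if it starts there. Concretely, $\ddt(A_1 v) = A_1' v + A_1 v'$; on the constraint set $A_1 v'$ is a scalar multiple of $A_1 v = 0$, and $A_1' v = -\unm(\cdots)A_1 v + \unm[A_1,[A_1,A_1^t]]v$, where $[A_1,[A_1,A_1^t]]v = A_1 A_1 A_1^t v - 2 A_1 A_1^t A_1 v + A_1^t A_1 A_1 v$, and every term contains a factor $A_1 v$ or $A_1^t v$ read off from the right — so it vanishes on the constraint set. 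The computation for $\ddt(A_1^t v)$ is symmetric, using $\ddt(A_1^t) = (A_1')^t$ and that transposing \eqref{evA2} with $a=0$ gives $(A_1^t)' = -\unm(\cdots)A_1^t - \unm[A_1^t,[A_1,A_1^t]]$, together with $[A_1^t,[A_1,A_1^t]]v$ again expanding into terms each ending in $A_1 v$ or $A_1^t v$.

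The one subtlety I would watch is whether $A_1 v = A_1^t v = 0$ is consistent with $A_1 \in \spg(\omega_1)$ and is genuinely a closed condition along the flow rather than one that only holds instantaneously; the linear-ODE argument above settles this, since the subspace $\{(x,y): x = A_1 v\text{-type}\}$ is really the statement that the finite-dimensional quantity $(A_1 v, A_1^t v)\in\ggo\oplus\ggo$ evolves by a (time-dependent) linear system with $0$ as a fixed point. I expect the only real work is bookkeeping the commutator expansions $[A_1,[A_1,A_1^t]]v$ and $[A_1^t,[A_1,A_1^t]]v$ and confirming each monomial is annihilated; there is no structural obstacle, and the final two equations then follow by the substitutions already described. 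Note also that these constraints force $\mu_A$ to split off $\la v\ra$ (and possibly $\la J_1 v\ra$) compatibly, which is why the family is natural to single out.
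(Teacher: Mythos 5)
Your proof is correct and follows essentially the same route as the paper's: specialize \eqref{eva}--\eqref{evA2} to $a=0$ and the two constraints, and then show that $(A_1v,A_1^tv)$ obeys a time-dependent linear ODE and hence stays at zero. The paper dismisses these computations as ``straightforward''; your commutator expansions and the observation that $A_1J_1v=-J_1A_1^tv=0$ and $A_1^tJ_1v=-J_1A_1v=0$ (which is what kills $[A_1,(vv^t)^{ac}]$) are precisely the details it omits.
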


\begin{remark}
The Chern-Ricci operator $P$ vanishes for any structure in this family as $r=c=0$.  Thus the SCF-evolution reduces to the anti-complexified Ricci flow (i.e. the symplectic structure remains fixed).
\end{remark}

\begin{proof}
The evolution equations for $v$ and $A_1$ follow from \eqref{evv} and \eqref{evA2}, respectively.  By using them, it is straightforward to compute the evolution of the vectors $A_1v$ and $A_1^t v$ to show that they remain zero in time, concluding the proof.
\end{proof}

\begin{proposition}\label{invf2}
The family $\{\mu_A : a=0,\quad A_1v=0, \quad A_1^2=0\}$ is invariant under the bracket flow, which becomes equivalent to
\begin{align*}
    v' =& -(\tfrac{5}{4}|v|^2 + \tr{S(A_1)^2})v + \unm A_1A_1^tv,
  \\
  A_1' =& -\unm(|v|^2 + \tr{S(A_1)^2})A_1 + \unm[A_1,[A_1, A_1^t]] - \unm[A_1,(vv^t)^{ac}].
\end{align*}
\end{proposition}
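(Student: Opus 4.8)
The plan is to follow the proof of Proposition \ref{invf1} closely. It splits into two parts: first, checking that on this family one automatically has $r=0$, so that the reduced bracket flow system \eqref{eva}, \eqref{evv}, \eqref{evA2} applies and collapses to the asserted equations; second, verifying that this reduced system preserves the three defining conditions $a=0$, $A_1v=0$, $A_1^2=0$.

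For the first part, I would note that $a=0$ and $A_1v=0$ give $r=\unc A_1v-\tfrac{a}{2}v=0$, so by \eqref{bf1}, \eqref{bf2} and \eqref{bf3} one has $\lambda_A(\ngo_1,\ngo_1)=0$ and $\lambda_A(e_{2n},e_1)\in\RR e_1$; hence $\lambda_A=\mu_B$ for some $B$ of the form in Proposition \ref{formA}, and the bracket flow is governed by \eqref{eva}, \eqref{evv}, \eqref{evA2}. Setting $a=0$ in \eqref{eva} yields $a'=0$, and since $A_1^2=0$ forces $(A_1^t)^2=(A_1^2)^t=0$, the terms $\unm(A_1^t)^2v$ in \eqref{evv} and $-\tfrac{a}{2}[A_1,A_1^t]$ in \eqref{evA2} drop out, leaving exactly the stated $v'$ and $A_1'$ equations.

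For the second part, I would differentiate the constraints along the reduced flow, computing $\ddt(A_1v)=A_1'v+A_1v'$ and $\ddt(A_1^2)=A_1'A_1+A_1A_1'$ and simplifying each summand under the standing hypotheses $A_1v=0$, $A_1^2=0$. Two elementary facts do the work. First, $(vv^t)^{ac}v=\unm|v|^2v$, already used in the proof of Theorem \ref{SA-sol}; together with $A_1v=0$ this gives $[A_1,(vv^t)^{ac}]v=\unm|v|^2A_1v=0$. Second, $A_1\in\spg(\omega_1)$ means $A_1^t=J_1A_1J_1$, whence $A_1^tJ_1v=J_1A_1J_1^2v=-J_1A_1v=0$; expanding $(vv^t)^{ac}=\unm\big(vv^t-(J_1v)(J_1v)^t\big)$ one then obtains $A_1(vv^t)^{ac}A_1=0$. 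Feeding these, together with the bare identities $A_1^2=0$, $(A_1^t)^2=0$, $A_1v=0$, into the four summands shows $A_1'v=0$, $A_1v'=0$, $A_1'A_1=0$, $A_1A_1'=0$; in particular $[A_1,[A_1,A_1^t]]$ and $[A_1,(vv^t)^{ac}]$ annihilate $v$ and, pre- or post-composed with $A_1$, vanish. Hence $A_1v$ and $A_1^2$ stay identically $0$ along the flow, which is the claimed invariance.

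The one point that is not pure bookkeeping is the appearance of $-\unm[A_1,(vv^t)^{ac}]$ in the evolution of $A_1$: the hard part will be showing that it does not break $A_1^2=0$, i.e.\ that $A_1(vv^t)^{ac}A_1=0$. This genuinely uses the symplectic constraint $A_1^t=J_1A_1J_1$ (through $A_1^tJ_1v=0$) and not merely $A_1v=0$; everything else is routine matrix algebra.
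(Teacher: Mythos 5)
Your proof is correct and follows essentially the same route as the paper's, which merely asserts that the evolutions of the vector $A_1v$ and the matrix $A_1^2$ are easily computed to remain zero; you have simply supplied the computations the paper omits (reduction to \eqref{eva}--\eqref{evA2} via $r=0$, then differentiating the constraints). One minor remark: the identity $A_1(vv^t)^{ac}A_1=0$ that you single out as the hard part is actually avoidable, since the two cross terms it controls combine to $-\unm\bigl(\,[A_1,(vv^t)^{ac}]A_1+A_1[A_1,(vv^t)^{ac}]\,\bigr)=-\unm[A_1^2,(vv^t)^{ac}]=0$ --- although your derivation of that identity from $A_1^t=J_1A_1J_1$ is also correct and is needed if one insists on killing $A_1'A_1$ and $A_1A_1'$ separately.
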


\begin{remark}
Each Lie algebra in this family is either $2$-step ($A_1^tv=0$) or $3$-step nilpotent ($A_1^tv\ne 0$).  The Chern-Ricci operator $P$ does not vanish in the $3$-step case; however, $P$ is always a derivation and so the SCF-evolution also reduces to the anti-complexified Ricci flow as for the above family.  It is easy to see that the SCF-solutions given in \cite[Example 9.4]{SCF} belong to this family.
\end{remark}

\begin{proof}
The evolution equations for $v$ and $A_1$ follow from \eqref{evv} and \eqref{evA2}, respectively.  It is then easy to compute the evolution of the vector $A_1v$ and the matrix $A_1^2$ to conclude that they remain zero in time, concluding the proof.
\end{proof}

\subsection{The case $v=0$}
The subset $\{\mu_A:v=0\}$ is invariant under the bracket flow, in the sense that any bracket flow solution starting at one of these structures has the form $\mu_{A(t)}$.  Since for each $t$ the Lie algebra $\mu_{A(t)}$ is isomorphic to the starting point $\mu_{A_0}$, we have that
$$
A(t)=c(t)H(t)A_0H(t)^{-1}, \qquad\mbox{for some} \quad c(t)>0, \quad H(t)\in\Gl_{2n-1}(\RR).
$$
The corresponding spectra (i.e. the unordered set of complex eigenvalues) therefore satisfy
\begin{equation}\label{spec}
\Spec(A(t))=c(t)\Spec(A_0), \qquad\forall t.
\end{equation}
It follows from \eqref{eva} and \eqref{evA2} that the bracket flow is equivalent when $v=0$ to the ODE system for $a=a(t)$ and $A_1=A_1(t)$ given by
\begin{equation}\label{BFv0}
\left\{\begin{array}{l}
a'=-(a^2+\unm\tr{S(A_1)^2})a, \\ \\
A_1'=  -(a^2+\unm\tr{S(A_1)^2})A_1+\unm[A_1,[A_1,A_1^t]]-\tfrac{a}{2}[A_1,A_1^t].
\end{array}\right.
\end{equation}

By using that $a=\tr{A}$ and $\tr{S(A)^2}=a^2+\tr{S(A_1)^2}$, this system can be written as a single equation for $A=A(t)$ as follows,
\begin{equation}\label{BFv02}
A'=  -\unm(a^2+\tr{S(A)^2})A+\unm[A,[A,A^t]]-\tfrac{\tr{A}}{2}[A,A^t].
\end{equation}

This equation differs from the bracket flow \cite[(7)]{Arr} used by Arroyo to study the Ricci flow for Riemannian manifolds $(G_{\mu_A},g)$ only in the coefficient that multiplies $A$, which is $-\tr{S(A)^2}$ in that case.  One therefore obtains, with identical proofs as in \cite{Arr}, that the solutions $A(t)$ to \eqref{BFv02} and the corresponding SCF-solutions $(\omega(t),g(t))$ on the solvable Lie group $G_{\mu_{A_0}}$ satisfy the following properties:

\begin{itemize}
\item $A(t)$ and hence the SCF-solution $(\omega(t),g(t))$ are defined for $t\in(T_-,\infty)$ since $|A(t)|$ is strictly decreasing unless $A(t)\equiv A_0$ (i.e. $A_0^t=-A_0$).  (See \cite[Proposition 3.4]{Arr}).

\item[ ]
\item The (scaling invariant) quantity
$$
\frac{|[A,A^t]|^2}{|A|^4}
$$
is strictly decreasing along the flow, unless $\mu_{A_0}$ is an algebraic SCF-soliton (see Theorem \ref{SA-sol}, (ii)).  This implies that any limit $B=\lim\limits_{t_k\to\infty}\frac{A(t_k)}{|A(t_k)|}$ gives rise to an algebraic SCF-soliton $\mu_B$.  (See \cite[Lemma 3.6 and Corollary 3.7]{Arr}).

\item[ ]
\item  There is always a subsequence $t_k\to\infty$ such that if $c_k:=|A(t_k)|^{-1}$, then the almost-K\"ahler manifolds $\left(G,\tfrac{1}{c_k^2}\omega(t_k),\tfrac{1}{c_k^2}g(t_k)\right)$ converge in the pointed sense to an algebraic SCF-soliton $(G_{\mu_B},\omega_0,g_0)$, as $k\to\infty$, where $B$ is any accumulation point of $\{ A(t)/|A(t)|:t\in[0,\infty)\}$ (see Corollary \ref{conv}).

\item[ ]
\item If $\tr{A_0^2}\geq 0$, then the SCF-solution $(\omega(t),g(t))$ is type-III, in the sense that there is a constant $C>0$ (which in this particular case depends only on the dimension $n$ but in general it may depend on the solution) such that
$$
|R(\omega(t),g(t))|+|\Riem(g(t))|\leq \frac{C}{t}, \qquad\forall t\in(0,\infty),
$$
where $R$ and $\Riem$ respectively denote the curvature tensors of the Chern and the Levi-Civita connections.  (See \cite[Proposition 3.14]{Arr}; recall that we also have that $\ddt\tr{S(A)^2}\leq\left(\tr{S(A)^2}\right)^2$).

\item[ ]
\item The Chern scalar curvature $\tr{P}=-2a^2$ and the scalar curvature $R=-a^2-\tr{S(A)^2}$ are both increasing and go to $0$ as $t\to\infty$.

\item[ ]
\item In the unimodular case (i.e. $a=0$), $\frac{A(t)}{|A(t)|}$ converges, as $t\to\infty$, to a matrix $B$ such that $\mu_B$ is an algebraic SCF-soliton. (See \cite[Lemma 4.1]{Arr}).
\end{itemize}

{\small
\begin{table}
\renewcommand{\arraystretch}{1.6}
$$
\begin{array}{|c|c|c|c|}\hline
  \ggo & \mbox{Lie bracket} & 2-\mbox{form}  & constraint    \\
\hline\hline

\RR^4 &(0,0,0,0) & \omega= e^1\wedge e^2+ e^3\wedge e^4 &  -  \\ \hline

\rg \hg_3 &(0,0,12,0) & \omega= e^1\wedge e^4+ e^2\wedge e^3 &  -  \\ \hline

\rg \rg_{3,0} & (0,12,0,0) & \omega= e^1\wedge e^2+ e^3\wedge e^4 & - \\ \hline

\rg \rg_{3,-1} & (0,12,-13,0) & \omega = e^1\wedge e^4+ e^2\wedge e^3 &  - \\ \hline

\rg \rg'_{3,0} & (0,13,-12,0) & \omega= e^1\wedge e^4+ e^2\wedge e^3 & -  \\ \hline

\rg_2 \rg_2 & (0,12,0,34) & \omega_\alpha = e^1\wedge e^2+ \alpha \, e^1\wedge e^3+ e^3\wedge e^4 & \alpha \ge 0 \\ \hline

 \rg_2' & (0,0,13+24,14-\tfrac{5}{3}\cdot 23) & \omega= e^1\wedge e^3+ e^2\wedge e^4 & - \\ \hline

 \ngo_4 & (0,41,42,0) & \omega= e^1\wedge e^2 + e^3\wedge e^4 &  - \\ \hline

\rg_{4,0} &  (41,43,0,0) & \omega_{\pm} = e^1\wedge e^4 \pm e^2\wedge e^3  & -\\ \hline

\rg_{4,-1} &  (41,43-42,-43,0) & \omega=e^1\wedge e^3 + e^2\wedge e^4  & -\\ \hline

\rg_{4,-1,\lambda} &  (41,-42,\lambda \cdot 43,0) & \omega=e^1\wedge e^2 + e^3\wedge e^4  & -1 \le \lambda <1 \\ \hline

\rg_{4,\lambda, -\lambda} &   (41,\lambda\cdot 42,-\lambda\cdot 43,0) & \omega=e^1\wedge e^4 + e^2\wedge e^3  & -1 < \lambda < 0 \\ \hline

\rg'_{4,0, \lambda} &   (41,\lambda \cdot43,-\lambda\cdot 42,0) & \omega_\pm = e^1\wedge e^4 \pm e^2\wedge e^3  & 0 < \lambda  \\ \hline

\multirow{2}{*}{$\dg_{4,1}$} & \multirow{2}{*}{$(41,0,12+43,0)$} & \omega_1=e^1\wedge e^2 - e^3\wedge e^4  &  -\\ \cline{3-4}
 & & \omega_2 = e^1\wedge e^4 + e^2\wedge e^3 & -\\ \hline

\multirow{2}{*}{$\dg_{4,2}$} & (2\cdot41,-42,12+43,0) & \omega_1=e^1\wedge e^2 - e^3\wedge e^4  &  -\\ \cline{2-4}
 & (2\cdot41,-42,2 \cdot12+43,0) & \omega_\pm = e^1\wedge e^4 \pm e^2\wedge e^3  &  -\\ \hline

\dg_{4,\lambda} & (\lambda\cdot 41,(1-\lambda)\cdot42,12+43,0) & \omega=e^1\wedge e^2 - e^3\wedge e^4  &  \lambda \ge \frac{1}{2}, \lambda \ne 1,2 \\ \hline

\dg'_{4,\lambda} & \begin{array}{c} \left(\tfrac{\sqrt{\lambda}}{2}\cdot41 + \frac{1}{\sqrt{\lambda}}\cdot42,\tfrac{\sqrt{\lambda}}{2}\cdot 42 - \;\;\right. \\
\;\;\left. \tfrac{1}{\sqrt{\lambda}}\cdot 41, \sqrt{\lambda}\cdot 12 + \sqrt{\lambda}\cdot 43,0 \right)\end{array}  & \omega_\pm = \pm(e^1\wedge e^2 - e^3\wedge e^4)  &  \lambda > 0  \\ \hline

\hg_4 &   (\frac{1}{2} \cdot 41+42,\frac{1}{2}\cdot 42,12+43,0) & \omega_\pm = \pm (e^1\wedge e^2 - e^3\wedge e^4)  & -\\ \hline
\end{array}
$$
\caption{Classification of $4$-dimensional symplectic Lie algebras \cite{Ovn}}\label{n1}
\end{table}}

\subsection{Compact quotients}\label{lattices}
The Lie group $G_{\mu_A}$ admits a lattice (i.e. a cocompact discrete sugbroup) if and only if
$$
\sigma e^{\alpha A}\sigma^{-1}\in\Sl_{2n-1}(\ZZ),
$$
for some nonzero $\alpha\in\RR$ and $\sigma\in\Gl_{2n-1}(\RR)$ (see \cite[Section 4]{Bck}).  In that case, a lattice is given by
$$
\Gamma=\exp\left(\sigma^{-1}\ZZ^{2n-1}\rtimes\ZZ\alpha e_{2n}\right).
$$
Moreover, if $\Spec(A)\subset\RR$ (i.e. $\mu_A$ is completely solvable), then two of these lattices differ by an automorphism of $G_{\mu_A}$ if and only if
$\sigma e^{\alpha A}\sigma^{-1}$ is conjugate to $\tau e^{\beta A}\tau^{-1}$ or its inverse in $\Gl_{2n-1}(\ZZ)$ (see \cite[Theorem 2.5]{Hng}).  We refer to \cite{CnsMcr} for a complete study of lattices on $6$-dimensional almost abelian groups, including results on formality and half-flatness of invariant and non-invariant symplectic structures on the corresponding compact quotients.

We have found in Section \ref{dim4-sec} a (strongly algebraic) SCF-soliton on any symplectic structure on unimodular Lie groups of dimension $4$, thus showing that any $4$-dimensional compact solvmanifold  $G/\Gamma$ does admit a SCF-soliton.  The next example shows that this is no longer true in dimension $6$.

\begin{example}\label{6latt}
By setting $a=0$, $v=0$ and
$$
A_1= \left[\begin{array}{cc|cc} 0&0&1&0\\ 0&\log{\lambda}&0&0 \\ \hline 0&0&0&0\\
0&0&0&-\log{\lambda}\end{array}\right]\in\spg(2,\RR), \qquad\lambda=\tfrac{3+\sqrt{5}}{2},
$$
we obtain a symplectic Lie group $(G_{\mu_A},\omega)$ which does not admit any algebraic SCF-soliton, as $A$ is neither nilpotent nor semisimple (see Corollary \ref{SA-cor}).  On the other hand, there exists $\sigma\in\Gl_4(\RR)$ such that
$$
\sigma e^{A}\sigma^{-1} =  \left[\begin{matrix} 1&0&1&0\\ 0&2&0&1\\
0&0&1&0\\ 0&1&0&1\end{matrix}\right]\in\Sl_4(\ZZ),
$$
and so $\Gamma=\exp\left(\sigma^{-1}\ZZ^4\rtimes\ZZ e_6\right)$ is a
lattice of $G_{\mu_A}$.
\end{example}

Concerning the SCF-solution starting at the almost-K\"ahler structure $(G_{\mu_A},\omega,g)$ in the example above, it is straightforward to prove that the family
$$
A_1= \left[\begin{array}{cc|cc} 0&0&b&0\\ 0&a&0&0\\\hline 0&0&0&0\\
0&0&0&-a\end{array}\right]\in\spg(2,\RR), \qquad a,b\in\RR,
$$
is invariant for the bracket flow equation (see \eqref{BFv0})
$$
A_1'=  -\unm\tr{S(A_1)^2})A_1+\unm[A_1,[A_1,A_1^t]],
$$
which becomes the following ODE system for the variables $a(t),b(t)$:
$$
\left\{\begin{array}{l}
a'= -(a^2+\unc b^2)a, \\ \\
b'= -(a^2+\tfrac{5}{4}b^2)b.
\end{array}\right.
$$
By a standard qualitative analysis, we obtain long-time existence (i.e. $T_+=\infty$) for all these SCF-solutions and that $(a,b)\to(0,0)$, as $t\to\infty$, from which follows that $(G_{\mu_A},\omega(t),g(t))$, with $A$ as in Example \ref{6latt}, converges to the euclidean space $(\RR^6,\omega_0,g_0)$ in the pointed sense, as $t\to \infty$.  Note that $P\equiv 0$ and the scalar curvature $R=-\tr{S(A_1)^2}$ is strictly increasing and converges to $0$ as $t\to\infty$.

Furthermore,
$$
\lim\limits_{t\to\infty} A(t)/|A(t)|= B:= \tfrac{1}{\sqrt{2}}\left[\begin{array}{cc|cc} 0&0&0&0\\ 0&1&0&0\\\hline 0&0&0&0\\
0&0&0&-1\end{array}\right]\in\spg(2,\RR),
$$
and thus pointed convergence of $(G_{\mu_A},c(t)\omega(t),c(t)g(t))$ toward the (strongly algebraic) SCF-soliton $(G_{\mu_B},\omega_0,g_0)$ (see Theorem \ref{SA-sol}, (ii)) follows for $c(t)=|{A(t)}|^2$ (see Corollary \ref{conv}), which is isometric to $\rg\rg_{3,-1}\times\RR^2$, where $\rg\rg_{3,-1}$ is the SCF-soliton given in Table \ref{n3}.

\begin{remark}
We note that $G_{\mu_B}$ also admits a lattice, say $\Lambda$.  It would be very useful to understand what kind of convergence one obtains for the sequence of compact almost-K\"ahler manifolds $(G_{\mu_A}/\Gamma,c(t)\omega(t),c(t)g(t))$ toward $(G_{\mu_B}/\Lambda,\omega_0,g_0)$, as $t\to\infty$.  Notice that $G_{\mu_B}/\Lambda$ is compact and not homeomorphic to $G_{\mu_A}/\Gamma$, thus pointed convergence can not hold for any subsequence.  The diameters of $(G_{\mu_A}/\Gamma,g(t))$ might go to infinity, in which case only pointed Gromov-Hausdorff convergence may be expected.
\end{remark}

{\footnotesize
\begin{table}
$$
\renewcommand{\arraystretch}{2.3}
\begin{array}{|c|c|c|c|c|c|c|}\hline
 \multirow{2}{*}{$\ggo$} & \multirow{2}{*}{$\omega$} & \multicolumn{2}{|c|}{P}  & \multicolumn{2}{|c|}{\Ricac}  & \multirow{2}{*}{Obs.}  \\ \cline{3-6}
 & &   c_1 & D_1 & c_2 & D_2 & \\
\hline \hline

\RR^4 & e^1\wedge e^2+ e^3\wedge e^4 & c_1 & -c_1I & c_2 & -c_2I &  \mbox{flat} \\ \hline

\rg \hg_3 & e^1\wedge e^4+ e^2\wedge e^3 & 0 & 0 & -\tfrac{5}{4} & (1,\tfrac{3}{4},\tfrac{7}{4},\tfrac{3}{2}) &  - \\ \hline

\rg \rg_{3,0} & e^1\wedge e^2+ e^3\wedge e^4 & -1& (0,0,1,1)& 0 & 0 & \mbox{K}\\ \hline

\rg \rg_{3,-1} & e^1\wedge e^4+ e^2\wedge e^3 & 0 & 0& -1& (0,1,1,2) & - \\ \hline

\rg \rg'_{3,0} & e^1\wedge e^4+ e^2\wedge e^3& 0 &0 & 0 & 0 & \mbox{flat}  \\ \hline

\rg_2\rg_2 & \omega_0=e^1\wedge e^2+ e^3\wedge e^4 & -1 &0 & 0 & 0 & \mbox{K-E}  \\ \hline

\rg_2' & e^1\wedge e^3+ e^2\wedge e^4 & -\tfrac{2}{3} &0 & \tfrac{4}{9} & (0,0,-\tfrac{8}{9},-\tfrac{8}{9}) & -  \\ \hline

\ngo_4 & e^1\wedge e^2 + e^3\wedge e^4 & 0 & \left[\begin{smallmatrix} &&0&0\\ &&0&-\unm\\ -\unm&0&&\\0&0&&\end{smallmatrix}\right] & -\frac{5}{4} & (1,\tfrac{3}{2},2,\unm) & -\\ \hline

\rg_{4,-1,\lambda} & e^1\wedge e^2 + e^3\wedge e^4  & - \lambda^2 & (\lambda^2,\lambda^2,0,0) & -(1+\lambda^2) & A_\lambda & - \\ \hline

\rg_{4,\lambda, -\lambda} & e^1\wedge e^4 + e^2\wedge e^3  & -1& (0,1,1,0)  &  -\lambda^2& B_\lambda  & -\\ \hline

\rg'_{4,0, \lambda} &  e^1\wedge e^4 \pm e^2\wedge e^3  & -1 & (0,1,1,0) &  0 & 0 & K \\ \hline

\multirow{2}{*}{$\dg_{4,1}$} & e^1\wedge e^2 - e^3\wedge e^4 & -\frac{3}{2} & 0 & \multirow{2}{*}{$-\unc$}&\multirow{2}{*}{$(-\tfrac{3}{4},\frac{5}{4},\unm,0)$} & - \\ \cline{2-4}
& e^1\wedge e^4 + e^2\wedge e^3 & -2 & (0,2,2,0) &  &  & - \\ \hline

\multirow{3}{*}{$\dg_{4,2}$} & e^1\wedge e^2 - e^3\wedge e^4 &  -\frac{3}{2} & 0 & -\frac{9}{4}& (-\frac{3}{4},\frac{21}{4},\frac{9}{2},0) & - \\ \cline{2-7}

  & e^1\wedge e^4 + e^2\wedge e^3  & -6 & (0,6,6,0) & \multirow{2}{*}{$0$} & \multirow{2}{*}{$0$} & \mbox{K}\\ \cline{2-4}\cline{7-7}

   & e^1\wedge e^4 - e^2\wedge e^3  & -2 & (0,2,2,0) &  &  & - \\ \hline

\dg_{4,\lambda} & e^1\wedge e^2 - e^3\wedge e^4 &  -\frac{3}{2} & 0 &  -(\lambda-\unm)^2& C_\lambda & \mbox{K-E}\, (\lambda= \tfrac{1}{2}) \\ \hline

\dg'_{4,\lambda} & \pm(e^1\wedge e^2 - e^3\wedge e^4)  &  -\frac{3}{2} & 0 & 0 & 0 & \mbox{K-E} \\ \hline
\end{array}
$$
\caption{SCF-solitons in dimension $4$}\label{n3}
\end{table}}

\section{SCF-solitons in dimension $4$}\label{dim4-sec}

We now study the existence problem for SCF-solitons on $4$-dimensional Lie groups.  We have listed in Table \ref{n1} all the symplectic structures up to isomorphism on $4$-dimensional Lie algebras according to the classification obtained by Ovando in \cite{Ovn}.  We have changed the basis $\{ e_i\}$ used in \cite{Ovn} in only three cases: for $\rg_2'$ we took $\{ e_1,\sqrt{\tfrac{5}{3}}e_2,-\sqrt{\tfrac{5}{3}}e_3,e_4\}$ instead, for $\omega_\pm$ on $\dg_{4,2}$ we used $\{ e_1,\sqrt{2}e_2,\frac{1}{\sqrt{2}}e_3,e_4\}$, and for $\omega_\pm$ on $\dg_{4,\lambda}'$, our basis is $\{ e_1,e_2,\frac{1}{\sqrt{\lambda}}e_3,\frac{1}{\sqrt{\lambda}}e_4\}$.  The notation we have used in Table \ref{n1} for Lie brackets can be understood from the example of $\hg_4$ in the last line, whose Lie bracket is described as $(\frac{1}{2} \cdot 41+42,\frac{1}{2}\cdot 42,12+43,0)$ and means
$$
[e_4,e_1]=\unm e_1, \quad [e_4,e_2]=e_1+\unm e_2, \quad [e_4,e_3]=e_3, \quad [e_1,e_2]=e_3.
$$
We have found a strongly algebraic SCF-soliton on each symplectic structure on a $4$-dimensional Lie group, with the exception of the following four cases:
$$
(\rg_2\rg_2,\omega_\alpha), \quad \alpha>0,  \quad (\rg_{4,0},\omega_\pm), \quad (\rg_{4,-1},\omega), \quad (\hg_4,\omega_\pm).
$$
We were able to prove the non-existence of an algebraic SCF-soliton only in the cases of $(\rg_{4,0},\omega_\pm)$ and $(\rg_{4,-1},\omega)$ (see Lemma \ref{r40-r4-1}).  The SCF-soliton almost-K\"ahler structures and their respective Chern-Ricci and Ricci operators are given in Table \ref{n3} as diagonal matrices with respect to the orthonormal basis $\{ e_1,e_2,e_3,e_4\}$ (except $\ngo_4$), together with the constants $c_i$ and the derivations $D_i$ such that $P=c_1I+D_1$ and $\Ricac=c_2I+D_2$.  We note that they are all expanding SCF-solitons since $c=c_1+c_2<0$, with the only exception of the flat structure $\rg\rg'_{3,0}$.  Most of these solitons were obtained by either direct computation or by using the structure results for almost abelian solvmanifolds given in Theorem \ref{SA-sol}, with the exception of $\rg'_2$, where the LSA construction considered in Section \ref{exmirror} was crucial.

In the last column we specify when the metric is K\"ahler-Einstein (K-E), only K\"ahler (K) or flat (i.e. isometric to $\RR^4$).  Recall that such structures are all K\"ahler-Ricci solitons.

In some cases, in order to simplify the description of the derivations in Table \ref{n3}, we have introduced the following notation:
$$
\begin{array}{c}
A_\lambda:=(1+\lambda^2 -\lambda, 1+\lambda^2 +\lambda,2(1+\lambda^2),0), \quad
B_\lambda:=(2\lambda^2,\lambda^2 -\lambda,\lambda^2 +\lambda,0), \\ \\
C_\lambda:=(\lambda^2-3\lambda+\tfrac{5}{4},\lambda^2+\lambda-\tfrac{3}{4},2(\lambda^2-\lambda)+\unm,0).
\end{array}
$$

\begin{remark}
A SCF-soliton $(G,\omega,g)$ in Table \ref{n3} is {\it static} (i.e. $p=c\omega$ and $\ricac=0$, or equivalently, its SCF-evolution is $(\omega(t),g(t))=(-2ct+1)(\omega,g)$) if and only if it is K\"ahler-Einstein.  This has been proved for any compact static almost-K\"ahler structure of dimension $4$ in \cite[Corollary 9.5]{StrTn2}.
\end{remark}

\subsection{Compact symplectic surfaces}\label{surfaces}
It follows from the classification given in Table \ref{n1} that there are exactly five (simply connected) solvable Lie groups of dimension $4$ admitting a left-invariant symplectic structure which also admit a lattice (i.e. compact discrete subgroup), giving rise to the compact symplectic surfaces which are solvmanifolds.  Their Lie algebras are:
$\RR^4$ (Complex tori), $\rg\hg_3$ (Primary Kodaira surfaces), $\rg\rg_{3,-1}$, $\rg\rg'_{3,0}$ (Hyperelliptic surfaces) and $\ngo_4$.  We refer to \cite{Hsg} for a comparison with compact complex surfaces which are solvmanifolds.  Recall that $\rg\rg_{3,-1}$ and $\ngo_4$ do not admit invariant complex structures.

According to Table \ref{n3}, they all admit a SCF-soliton which is steady in the flat cases $\RR^4$ and $\rg\rg'_{3,0}$ and expanding in the other three cases.

Since each of these five Lie algebras admits a codimension one abelian ideal, it follows from Section \ref{muA-dim4-sec} that any left-invariant almost-K\"ahler structure on them is equivalent to $(G_{\mu_A},\omega,g)$ for some
\begin{equation}\label{Adim4-surf}
A= \left[\begin{matrix}0 & b & c \\  0 & d & e \\ 0 & f &-d \end{matrix}\right], \qquad b,c,d,e,f  \in \RR.
\end{equation}
It is easy to check  that
$$
\mu_A\simeq\left\{\begin{array}{lcl}
\RR^4 && A=0; \\
\rg\hg_3 && d^2+ef=0, \quad db+fc=0, \quad eb-dc=0, \quad A\ne 0; \\
\rg\rg_{3,-1} && d^2+ef>0; \\
\rg\rg'_{3,0} && d^2+ef<0; \\
\ngo_4 && d^2+ef=0, \quad (db+fc,eb-dc)\ne (0,0),
\end{array} \right.
$$
and the Chern-Ricci and Ricci operators can be computed by using \eqref{PA} and \eqref{RicacA}, respectively:
$$
 P = \left[\begin{smallmatrix}
 0& - \frac{db +fc}{2} \quad & - \frac{eb-dc}{2}   & 0\\ & & & \\
 &  0 & 0 &  \frac{eb-dc}{2} \\ & & & \\
 &  & 0 & - \frac{db +fc}{2}\\  & & & \\
 &  &   &0
\end{smallmatrix}\right],
$$

$$
\Ricac  = \left[\begin{smallmatrix}
 d^2 + \frac{b^2+c^2}{2} + \frac{(e + f)^2}{4} & \frac{db+ce}{4} & \frac{bf-dc}{4} & 0 \\
 \frac{db+ce}{4} & \frac{e^2-f^2}{2} -\frac{b^2 -c^2}{4}  & \quad d(f-e)- \frac{bc}{2} & \frac{bf-dc}{4} \\
 \frac{bf-dc}{4}  &  d(f-e)- \frac{bc}{2} & \frac{f^2-e^2}{2} +\frac{b^2 -c^2}{4}   &-\frac{db+ce}{4}  \\
0 & \frac{bf-dc}{4} & -\frac{db+ce}{4}  & - d^2 - \frac{b^2+c^2}{2} - \frac{(e + f)^2}{4}
\end{smallmatrix}\right].
$$
Each of the following five matrices $A$ provides a SCF-soliton on the corresponding Lie group in the order we are using:
$$
\left[\begin{matrix}0 & 0 & 0 \\  0 & 0 & 0 \\ 0 & 0 & 0 \end{matrix}\right], \quad
\left[\begin{matrix}0 & 0 & 0 \\  0 & 0 & 1 \\ 0 & 0 & 0 \end{matrix}\right], \quad
\left[\begin{matrix}0 & 0 & 0 \\  0 & 1 & 0 \\ 0 & 0 &-1 \end{matrix}\right], \quad
\left[\begin{matrix}0 & 0 & 0 \\  0 & 0 & -1 \\ 0 & 1 & 0 \end{matrix}\right], \quad
\left[\begin{matrix}0 & 1 & 0 \\  0 & 0 & 1 \\ 0 & 0 & 0 \end{matrix}\right].
$$
As an application of Section \ref{BFmuA-sec}, for each starting almost-K\"ahler structure $(G_{\mu_A},\omega,g)$ with $A$ as in \eqref{Adim4-surf}, we obtain that $A(t)/|A(t)|$ converges to one of the soliton matrices $B$ above such that $G_{\mu_A}$ and $G_{\mu_B}$ are isomorphic, that is, the one with same eigenvalues as $A$ up to scaling.  Thus pointed convergence of $(G_{\mu_A},c(t)\omega(t),c(t)g(t))$ toward the (strongly algebraic) SCF-soliton $(G_{\mu_B},\omega,g)$ follows for $c(t)=|{A(t)}|^2$ (see Corollary \ref{conv}).

\begin{remark}
It would be interesting to know if this gives rise to (pointed) Gromov-Hausdorff convergence for the corresponding compact quotients.
\end{remark}

\section{LSA construction}\label{weak}

All SCF-solitons we have found in Sections \ref{muA-sec} and \ref{dim4-sec} are on solvable Lie groups and moreover, they are all expanding in the nonflat case (see Remark \ref{allexp} and Table \ref{n3}).  For the Ricci flow, it is well known that any shrinking homogeneous Ricci soliton is trivial, in the sense that it is finitely covered by a product of a compact Einstein homogeneous manifold with a euclidean space (see \cite{PtrWyl}), and any steady homogeneous Ricci soliton is necessarily flat.  However, it is an open question whether any expanding homogeneous Ricci soliton is isometric to a left-invariant metric on a solvable Lie group, which is now known to be essentially equivalent to Alekseevskii's Conjecture (see e.g. \cite{alek,ArrLfn,JblPtr} and the references therein).

In this section, in order to search for SCF-solitons beyond the solvable case, we shall study a construction attaching to each $n$-dimensional left-symmetric algebra an almost-K\"ahler structure on a $2n$-dimensional Lie group (see e.g. \cite{Bym,AndSlm,Ovn} for further information on this construction).  Our search succeeded in finding a shrinking SCF-soliton on the Lie algebra $\ug(2)\ltimes\HH$ (see Example \ref{u(2)}) and an expanding SCF-soliton on $\glg_2(\RR)\ltimes\RR^4$ (see Example \ref{gl2nice}).

A {\it left-symmetric algebra} (LSA for short) structure on a vector space $\ggo$ is a bilinear product $\cdot:\ggo\times\ggo\longrightarrow\ggo$ satisfying the condition
\begin{equation}\label{LSA-def}
X\cdot(Y\cdot Z)-(X\cdot Y)\cdot Z = Y\cdot (X\cdot Z)-(Y\cdot X)\cdot Z, \qquad\forall X,Y,Z\in\ggo.
\end{equation}
(From now on, the phrase `for all $X,Y,Z\in\ggo$' will be understood in any formula containing $X,Y,Z$).  This implies that
\begin{equation}\label{lb-lsa}
[X,Y]_\ggo:=X\cdot Y-Y\cdot X,
\end{equation}
is a Lie bracket on $\ggo$ and if $L(X):\ggo\longrightarrow\ggo$ denotes LSA left-multiplication by $X$ (i.e. $L(X)Y=X\cdot Y$), then $L$ is a representation:
$$
L([X,Y]_\ggo)=L(X)L(Y)-L(Y)L(X).
$$
We now show how each LSA structure on $\ggo$ determines an almost-K\"ahler structure on $\ggo\oplus\ggo$.  Consider the representation $\theta:\ggo\longrightarrow\End(\ggo)$ given by
\begin{equation}\label{rep-lsa}
\theta(X):=-L(X)^t,
\end{equation}
where $L(X)^t$ denotes the transpose of the map $L(X)$ with respect to an inner product $\ip$ on $\ggo$, which will be considered fixed from now on, and define the Lie algebra $\ggo\ltimes_\theta\ggo$ with Lie bracket
\begin{equation}\label{m-lb}
[(X,Y),(Z,W)]:=\left( [X,Z]_\ggo, \theta(X)W-\theta(Z)Y \right).
\end{equation}
Note that by \eqref{lb-lsa} and \eqref{rep-lsa}, $\lb_\ggo$ is determined by $\theta$ as follows,
\begin{equation}\label{oc}
[X,Y]_\ggo=-\theta(X)^tY+\theta(Y)^tX.
\end{equation}
Consider also the almost-complex structure $J:\ggo\oplus\ggo\longrightarrow\ggo\oplus\ggo$ defined by
$$
J(X,Y):=(Y,-X), \qquad \mbox{i.e.} \quad J=\left[\begin{array}{cc} 0&I\\ -I&0\end{array}\right].
$$

On the right we are writing $J$ as a matrix in terms of the basis $\{(e_i,0)\}\cup\{(0,e_i)\}$, where $\{ e_i\}$ is any orthonormal basis of $\ggo$.  Such basis of $\ggo\oplus\ggo$ will be fixed and used without any further mention, e.g. to write operators as matrices.  A $2$-form $\omega$ on $\ggo\oplus\ggo$ can therefore be defined by
$$
\omega:=g(J\cdot,\cdot),  \qquad\mbox{where}\qquad g:=\ip\oplus\ip,
$$
or equivalently,
$$
\omega=-\sum_{i=1}^n (e^i,0)\wedge (0,e^i),
$$
where $\{ e^i\}$ denotes the dual basis of $\{ e_i\}$.

The almost-hermitian Lie algebra $(\ggo\ltimes_\theta\ggo,\omega,g)$ is therefore completely determined by the LSA structure, as $\theta$ and $\lb_\ggo$ are so and the whole `linear algebra' data (i.e. $(\ggo\oplus\ggo,\omega,g)$) has been fixed.  Moreover, it is easy to see that condition \eqref{oc} is equivalent to $d\omega=0$.  Summing up,

\begin{proposition}\label{LSA-aK}
Any LSA structure on $\ggo$ defines an almost-K\"ahler Lie algebra
$$
(\ggo\ltimes_\theta\ggo,\omega,g).
$$
\end{proposition}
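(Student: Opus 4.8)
The plan is to split the claim into its two independent components: first, that $\ggo\ltimes_\theta\ggo$ equipped with the bracket \eqref{m-lb} is a genuine Lie algebra, and second, that the fixed triple $(\omega,g,J)$ is an almost-K\"ahler structure on it, i.e.\ that $J^2=-I$, that $\omega=g(J\cdot,\cdot)$ is a nondegenerate $2$-form compatible with $g$, and that $d\omega=0$.

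For the first component I would start from the two facts already recorded after \eqref{LSA-def} and \eqref{lb-lsa}: the LSA axiom makes $\lb_\ggo$ a Lie bracket on $\ggo$, and $L$ a representation of $(\ggo,\lb_\ggo)$ on $\ggo$. Taking transposes of $L([X,Y]_\ggo)=L(X)L(Y)-L(Y)L(X)$ reverses the order of composition, and since $\theta(X)=-L(X)^t$ this yields $\theta([X,Y]_\ggo)=\theta(X)\theta(Y)-\theta(Y)\theta(X)$; hence $\theta$ is again a representation of $(\ggo,\lb_\ggo)$. Then \eqref{m-lb} is precisely the bracket of the semidirect product of the Lie algebra $(\ggo,\lb_\ggo)$ with the abelian ideal $\ggo$ (the second copy) on which it acts via $\theta$, so the Jacobi identity holds by the standard semidirect-product computation, whose three cases are: Jacobi on $(\ggo,\lb_\ggo)$, the representation property of $\theta$, and triviality because the ideal is abelian.

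For the second component, the matrix form $J=\left[\begin{smallmatrix} 0&I\\ -I&0\end{smallmatrix}\right]$ gives $J^2=-I$ at once, and with $g=\ip\oplus\ip$ one checks $g(J(X,Y),(Z,W))=\la Y,Z\ra-\la X,W\ra=-g((X,Y),J(Z,W))$, so $\omega=g(J\cdot,\cdot)$ is skew, i.e.\ a genuine $2$-form, and nondegenerate since $g$ is an inner product and $J$ is invertible; compatibility of $g$ with $\omega$ is then automatic. It remains to verify the cocycle condition \eqref{closed} for $\omega$. The plan here is to evaluate the cyclic sum on three elements $(X_i,Y_i)$, $i=1,2,3$, using $\omega((X,Y),(Z,W))=\la Y,Z\ra-\la X,W\ra$ together with \eqref{m-lb}, collect the resulting terms according to which of $Y_1,Y_2,Y_3$ occurs, and invoke the identity \eqref{oc}, namely $[X,Y]_\ggo=-\theta(X)^tY+\theta(Y)^tX$: each block of $\theta$-terms then becomes $\la Y_i,[X_j,X_k]_\ggo\ra$, which cancels exactly against the term produced by the $\ggo$-component of the bracket. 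Hence $d\omega=0$ and $(\ggo\ltimes_\theta\ggo,\omega,g)$ is almost-K\"ahler.

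None of this is deep; it is a bookkeeping verification, so the one place to be careful is the closedness computation, where the sign conventions in \eqref{rep-lsa}, \eqref{m-lb} and \eqref{oc} must be tracked consistently. In fact that computation shows slightly more, namely that closedness of $\omega$ is equivalent to \eqref{oc}, that is, to $\lb_\ggo$ being the commutator of the LSA product, which is the equivalence flagged in the paragraph preceding the statement; and note that the LSA axiom \eqref{LSA-def} enters only through its two consequences used in the first step, so once those are in hand the proposition is essentially immediate.
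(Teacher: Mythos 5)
Your proposal is correct and follows exactly the route the paper takes: the paper's ``proof'' consists of the preceding construction (noting that $L$, hence $\theta$, is a representation, so that $\ggo\ltimes_\theta\ggo$ is a semidirect product Lie algebra with abelian ideal) together with the remark that condition \eqref{oc} is equivalent to $d\omega=0$; you have simply written out the transpose computation for $\theta$ and the cyclic-sum verification that the paper leaves implicit. The observation that closedness is \emph{equivalent} to \eqref{oc} is likewise the one the paper records just before the statement.
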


\begin{remark}\label{mirror}
If we define a Lie bracket $\lb^*$ on $\ggo\oplus\ggo$ as in \eqref{m-lb} by using the same $\lb_{\ggo}$ but the representation $\theta^*(X)=L(X)=-\theta(X)^t$ instead of $\theta$, then what we obtain is a hermitian Lie algebra
$$
(\ggo\ltimes_{\theta^*}\ggo,J,g),
$$
i.e. $J$ is integrable.  Together, the corresponding complex manifold $(G_{\theta^*},J)$ and the symplectic manifold $(G_\theta,\omega)$ form a {\it weak mirror pair}, i.e. their associated differential
Gerstenhaber algebras are quasi-isomorphic (see e.g. \cite{mirror}).
\end{remark}

\begin{remark}
The left-invariant affine connection on the corresponding Lie group $\nabla:\ggo\times\ggo\longrightarrow\ggo$ defined by
$$
\nabla_XY:= X\cdot Y = -\theta(X)^tY,
$$
is flat (i.e. $\nabla_{[X,Y]_\ggo}=[\nabla_X,\nabla_Y]$) and torsion free (i.e. $[X,Y]_\ggo=\nabla_XY-\nabla_YX$).
\end{remark}

\begin{remark}\label{assume}
We will assume in what follows that $(0,\ggo)$ is invariant by any element of $\Aut(\ggo\ltimes_\theta\ggo)$ for all the LSA structures considered.  This for example holds when the abelian ideal $(0,\ggo)$ is the nilradical of $\ggo\ltimes_\theta\ggo$.
\end{remark}

\begin{proposition}\label{LSA-iso}
Two symplectic Lie algebras $(\ggo\ltimes_{\theta_1}\ggo,\omega)$ and $(\ggo\ltimes_{\theta_2}\ggo,\omega)$ are isomorphic if and only if there exists $\psi\in\Gl(\ggo)$ such that
\begin{equation}\label{psi}
L_2(\psi X)=\psi L_1(X)\psi^{-1}, \qquad\forall X\in\ggo,
\end{equation}
i.e. the corresponding LSA structures are isomorphic.
\end{proposition}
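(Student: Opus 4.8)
The plan is to translate the statement about isomorphic symplectic Lie algebras into a statement about the matrices $\theta$, and then into a statement about the LSA products, using the dictionary established in Section~\ref{sec-LG}. By the discussion there, $(\ggo\ltimes_{\theta_1}\ggo,\omega)$ and $(\ggo\ltimes_{\theta_2}\ggo,\omega)$ are isomorphic precisely when there is $\varphi\in\Spe(\omega_0)$ carrying one Lie bracket to the other. So first I would unwind what $\varphi\in\Spe(\omega)$ looks like for the specific $\omega=g(J\cdot,\cdot)$ with $J=\left[\begin{smallmatrix}0&I\\-I&0\end{smallmatrix}\right]$: since by Remark~\ref{assume} we are assuming $(0,\ggo)$ is invariant under every automorphism, $\varphi$ must be block lower-triangular with respect to the splitting $\ggo\oplus\ggo$, say $\varphi=\left[\begin{smallmatrix}\psi&0\\ *&\eta\end{smallmatrix}\right]$. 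Imposing $\varphi^*\omega=\omega$ (equivalently $\varphi^{t_\omega}\varphi$ preserves $J$ in the appropriate sense, i.e.\ $\varphi^tJ\varphi=J$) will force $\eta=(\psi^t)^{-1}$, so that the ``first component'' of $\varphi$ is an arbitrary $\psi\in\Gl(\ggo)$ and the rest is determined (up to the off-diagonal block, which I expect to be constrained to a symmetric-type condition but which will not affect the conclusion).

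Next I would write out the condition $\varphi\cdot[\cdot,\cdot]_1=[\cdot,\cdot]_2$ componentwise using the explicit bracket \eqref{m-lb}. The $\ggo$-component of the bracket only involves $[\cdot,\cdot]_\ggo$ and the action of $\psi$ on the first factor, so matching it gives that $\psi$ is a Lie algebra isomorphism for $\lb_\ggo$; by \eqref{oc} this is already encoded in a compatibility between $\psi$ and the $\theta_i$. The second component, $\theta(X)W-\theta(Z)Y$, when pushed through $\varphi$ and matched, will yield after simplification exactly the intertwining relation $\theta_2(\psi X)=\eta\,\theta_1(X)\,\eta^{-1}$ with $\eta=(\psi^t)^{-1}$; taking transposes and using $\theta_i(X)=-L_i(X)^t$ converts this into $L_2(\psi X)=\psi L_1(X)\psi^{-1}$, which is \eqref{psi}. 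The off-diagonal block of $\varphi$ should drop out of this identity (it contributes a term that vanishes because the first factor $(0,\ggo)$ is abelian), so no further constraint on $\psi$ arises. Conversely, given $\psi$ satisfying \eqref{psi}, one checks directly that $\varphi:=\left[\begin{smallmatrix}\psi&0\\ 0&(\psi^t)^{-1}\end{smallmatrix}\right]$ lies in $\Spe(\omega)$ (immediate from $J\varphi=\varphi J$ would be false, so one uses $\varphi^tJ\varphi=J$, which holds for this block form) and intertwines the two brackets, using also that \eqref{psi} forces $\psi$ to respect $\lb_\ggo$ via \eqref{oc}.

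The main obstacle I anticipate is the bookkeeping around the off-diagonal block of $\varphi$ and the precise normalization forced by $\varphi\in\Spe(\omega)$: one has to be careful that the general symplectic map preserving $(0,\ggo)$ is not just the ``obvious'' block-diagonal one, and then verify that the extra freedom is genuinely irrelevant to the intertwining of the $\theta_i$. Concretely, writing $\varphi=\left[\begin{smallmatrix}\psi&0\\ C&(\psi^t)^{-1}\end{smallmatrix}\right]$, the condition $\varphi^tJ\varphi=J$ should reduce to $\psi^tC$ being symmetric (or some such), and then in matching the second bracket component the $C$-term multiplies something landing in the abelian factor and cancels. Once that is cleared, the equivalence is a short computation. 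I would also remark, as a byproduct, that the analogous statement for \emph{equivalence} of almost-K\"ahler structures (not just symplectic isomorphism) follows by further restricting $\varphi\in\U(\omega,g)=\Spe(\omega)\cap\Or(g)$, which forces $\psi\in\Or(\ip)$ and $C=0$, i.e.\ the LSA structures are isometrically isomorphic.
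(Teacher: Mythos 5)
Your proposal is correct and follows essentially the same route as the paper's proof: by Remark \ref{assume} any isomorphism is block lower-triangular, the condition $\vp\in\Spe(\omega)$ forces the second diagonal block to be $(\psi^t)^{-1}$, matching the second component of the bracket \eqref{m-lb} yields $\theta_2(\psi X)=(\psi^t)^{-1}\theta_1(X)\psi^t$, and transposing via $\theta_i(X)=-L_i(X)^t$ gives \eqref{psi}, with the converse handled by the block-diagonal $\vp=\left[\begin{smallmatrix}\psi&0\\ 0&(\psi^t)^{-1}\end{smallmatrix}\right]$. The only imprecision is cosmetic: the off-diagonal block $C$ does not literally cancel by abelianity, but its contribution sits in the part of the matching equation depending only on the first-factor arguments, so it imposes a separate condition on $C$ and none on $\psi$, exactly as you conclude.
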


\begin{proof}
If \eqref{psi} holds, then it is easy to check that $\vp=\left[\begin{matrix} \psi&0\\ 0&(\psi^t)^{-1}\end{matrix}\right]$ is a Lie algebra isomorphism between $\ggo\ltimes_{\theta_1}\ggo$ and $\ggo\ltimes_{\theta_2}\ggo$.  Since $\vp\in\Spe(\omega)$, we obtain that the symplectic Lie algebras are also isomorphic.

Conversely, due to our assumption (see Remark \ref{assume}), any isomorphism between the Lie algebras has the form $\vp=\left[\begin{matrix} \vp_1&0\\ \vp_3&\vp_2\end{matrix}\right]$, which implies that $\vp_1[\cdot,\cdot]_{\ggo_1}=[\vp_1\cdot,\vp_1\cdot]_{\ggo_2}$ and $\theta_2(\vp_1X)=\vp_2\theta_1(X)\vp_2^{-1}$.  But since $\vp\in\Spe(\omega)$ we have that $\vp_2=(\vp_1^t)^{-1}$, from which condition \eqref{psi} easily follows for $\psi=\vp_1$.
\end{proof}

In much the same way, we obtain the following criterium for equivalence.

\begin{proposition}\label{LSA-equiv}
Two almost-K\"ahler structures $(\ggo\ltimes_{\theta_1}\ggo,\omega,g)$ and $(\ggo\ltimes_{\theta_2}\ggo,\omega,g)$ are equivalent if and only if there exists an orthogonal map $\psi\in\Or(\ggo,\ip)$ such that
\begin{equation}\label{psiort}
L_2(\psi X)=\psi L_1(X)\psi^{-1}, \qquad\forall X\in\ggo.
\end{equation}
\end{proposition}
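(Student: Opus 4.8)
The plan is to follow \emph{mutatis mutandis} the proof of Proposition \ref{LSA-iso}, incorporating the extra constraint imposed by the metric. Recall that an equivalence between the two almost-K\"ahler structures $(\ggo\ltimes_{\theta_1}\ggo,\omega,g)$ and $(\ggo\ltimes_{\theta_2}\ggo,\omega,g)$ is precisely a Lie algebra isomorphism $\vp$ of the underlying Lie algebras which in addition lies in $\U(\omega,g)=\Spe(\omega)\cap\Or(g)$; equivalently, it is an isomorphism of the underlying \emph{symplectic} Lie algebras that moreover preserves $g$ (and hence commutes with $J$, since $\omega=g(J\cdot,\cdot)$).

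For the ``if'' direction I would start from an orthogonal $\psi$ satisfying \eqref{psiort} and set $\vp=\left[\begin{smallmatrix}\psi&0\\ 0&(\psi^t)^{-1}\end{smallmatrix}\right]$, exactly as in the proof of Proposition \ref{LSA-iso}; that argument already shows $\vp$ is a Lie algebra isomorphism $\ggo\ltimes_{\theta_1}\ggo\to\ggo\ltimes_{\theta_2}\ggo$ belonging to $\Spe(\omega)$. The only new point is that for orthogonal $\psi$ one has $(\psi^t)^{-1}=\psi$, so in fact $\vp=\left[\begin{smallmatrix}\psi&0\\ 0&\psi\end{smallmatrix}\right]$, which commutes with $J=\left[\begin{smallmatrix}0&I\\ -I&0\end{smallmatrix}\right]$ and preserves $g=\ip\oplus\ip$; thus $\vp\in\U(\omega,g)$, giving the desired equivalence.

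For the ``only if'' direction I would take an equivalence $\vp\in\U(\omega,g)$, which is in particular an isomorphism of symplectic Lie algebras, so by Remark \ref{assume} and the block computation in Proposition \ref{LSA-iso} it has the form $\vp=\left[\begin{smallmatrix}\vp_1&0\\ \vp_3&\vp_2\end{smallmatrix}\right]$ with $\vp_2=(\vp_1^t)^{-1}$, with $\vp_1$ a Lie algebra automorphism of $(\ggo,\lb_\ggo)$, and with $\theta_2(\vp_1 X)=\vp_2\theta_1(X)\vp_2^{-1}$. Now imposing $\vp^t\vp=I$: the lower-left block of $\vp^t\vp$ equals $\vp_2^t\vp_3$, so $\vp_3=0$ because $\vp_2$ is invertible, and the diagonal blocks give $\vp_1^t\vp_1=I$; hence $\psi:=\vp_1\in\Or(\ggo,\ip)$ (and $\vp_2=(\psi^t)^{-1}=\psi$ is automatically orthogonal too). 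Finally, transposing the relation $\theta_2(\psi X)=\psi\theta_1(X)\psi^{-1}$ and using $\theta=-L^t$ together with $\psi^{-1}=\psi^t$ yields \eqref{psiort}, exactly as \eqref{psi} was obtained.

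I do not expect any genuine obstacle: the whole content is to push the additional relation $\vp^t\vp=I$ through the block decomposition already established for symplectic isomorphisms. The one point that requires a moment's care is that equivalence is measured by $\U(\omega,g)$ rather than by $\Spe(\omega)$ alone, so one must check both that orthogonality forces the lower-left block $\vp_3$ to vanish (for a mere symplectic isomorphism it need not) and that it upgrades $\vp_1$ to an orthogonal map; once this is observed, everything else is identical to Proposition \ref{LSA-iso}.
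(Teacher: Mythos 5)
Your proof is correct and is exactly the argument the paper intends: the paper simply states that the result follows ``in much the same way'' as Proposition \ref{LSA-iso}, and your proposal fills in precisely those details, pushing the extra condition $\vp^t\vp=I$ through the block decomposition to force $\vp_3=0$ and $\vp_1\in\Or(\ggo,\ip)$. No gaps; the verification that $\vp_2=(\psi^t)^{-1}=\psi$ and the transposition step recovering \eqref{psiort} from $\theta_2(\psi X)=\psi\theta_1(X)\psi^{-1}$ are both handled correctly.
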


\begin{example}\label{gl2Bii}
Consider on $\ggo=\glg_2(\RR)$ the basis
$$
e_1=\left[\begin{smallmatrix} 0&1\\ 0&0\end{smallmatrix}\right], \quad e_2=\left[\begin{smallmatrix} 0&0\\ 1&0\end{smallmatrix}\right], \quad e_3=\left[\begin{smallmatrix} 1&0\\ 0&-1\end{smallmatrix}\right], \quad e_4=\left[\begin{smallmatrix} 1&0\\ 0&1\end{smallmatrix}\right],
$$
whose Lie bracket relations are
$$
[e_1,e_2]=e_3, \quad [e_3,e_1]=2e_1, \quad [e_3,e_2]=-2e_2,
$$
and the one-parameter family of LSA structures defined for any $\alpha\geq 0$ by
$$
\begin{array}{l}
L_\alpha(e_1)=\left[\begin{smallmatrix} 0&0&-1&1+\alpha\\ 0&0&0&0 \\ 0&(1+\alpha)/2&0&0\\ 0&1/2&0&0\end{smallmatrix}\right], \quad L_\alpha(e_2)=\left[\begin{smallmatrix} 0&0&0&0\\ 0&0&1&1-\alpha \\ -(1-\alpha)/2&0&0&0\\ 1/2&0&0&0\end{smallmatrix}\right], \\ \\
L_\alpha(e_3)=\left[\begin{smallmatrix} 1&0&0&0\\ 0&-1&0&0 \\ 0&0&\alpha&1-\alpha^2\\ 0&0&1&-\alpha\end{smallmatrix}\right],
\quad L_\alpha(e_4)=\left[\begin{smallmatrix} 1+\alpha&0&0&0\\ 0&1-\alpha&0&0 \\ 0&0&1-\alpha^2&-\alpha(1-\alpha^2)\\ 0&0&-\alpha&1+\alpha^2\end{smallmatrix}\right].
\end{array}
$$
It is proved in \cite{Brd} that these LSA structures are pairwise non-isomorphic and henceforth, according to Proposition \ref{LSA-iso}, $(\ggo\ltimes_{\theta_\alpha}\ggo,\omega)$ is a family of pairwise non-isomorphic symplectic Lie algebras.  Actually, the Lie algebras $\ggo\ltimes_{\theta_\alpha}\ggo$, $\alpha\geq 0$ are pairwise non-isomorphic, as it is easy to check that the spectrum of $L_\alpha(e_4)$ is an invariant and equals $\{ 1\pm\alpha,1\pm\alpha\}$.  Notice that $\alpha=0$ corresponds to the usual multiplication of matrices in $\glg_2(\RR)$, and is the only one associative among the family.  In order to obtain the complete classification of LSA structures on $\glg_2(\RR)$ up to isomorphism, an extra one-parameter family and two more (isolated) structures must be added (see \cite[Theorem 3]{Brd} and \cite[Section 5.1]{Bas}).
\end{example}

\subsection{Chern-Ricci and Ricci curvature}
We compute in this section the Chern-Ricci operator $P$ and the anti-J-invariant Ricci operator $\Ricac$ for the almost-K\"ahler structure $(\ggo\ltimes_\theta\ggo,\omega,g)$ from Proposition \ref{LSA-aK}.

We first define $A,A^*\in\ggo$ by
$$
A:=\sum_{i=1}^n\theta(e_i)e_i, \qquad A^*:=-\sum_{i=1}^n\theta(e_i)^te_i =\sum_{i=1}^n e_i\cdot e_i.
$$
By a straightforward computation, one obtains that the Chern-Ricci form $p$ vanishes on both $\ggo$-summands and
\begin{equation}\label{m-CRform}
p((X,0),(0,Y))=-\unm\la\theta(X)Y,A^*\ra+\unm\tr{\ad_\ggo{\theta(X)Y}}+\unm\tr{\theta(\theta(X)Y)}.
\end{equation}
The Chern-Ricci operator $P$ therefore leaves invariant each $\ggo$-summand.  More precisely,

\begin{lemma}  $P=\left[\begin{array}{cc} P&0\\ 0&P^t\end{array}\right]$, where $P\in\End(\ggo)$ is defined by
\begin{equation}\label{Psymp}
P=\unm\ad_\ggo{(A^*-A)}+\unm\theta(A^*-A)^t.
\end{equation}
\end{lemma}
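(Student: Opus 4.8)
The plan is to compute $P$ from the general formula \eqref{PadZ}. Choose $Z=(Z_1,Z_2)\in\ggo\oplus\ggo$ with $p(U,V)=\omega(Z,[U,V])$ for all $U,V$ (possible since $p$ is exact); because $d\omega=0$ the operator $P=\ad{Z}+(\ad{Z})^{t_\omega}$ is then the Chern--Ricci operator regardless of which such $Z$ is taken. From \eqref{m-lb}, $[(X,0),(X',0)]=([X,X']_\ggo,0)$ and $[(X,0),(0,Y)]=(0,\theta(X)Y)$; hence the vanishing of $p$ on each $\ggo$-summand (see \eqref{m-CRform}) forces $\la Z_2,[X,X']_\ggo\ra=0$ for all $X,X'$, so we may take $Z_2=0$, while on the mixed part $p((X,0),(0,Y))=-\la Z_1,\theta(X)Y\ra$.

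Next I would simplify \eqref{m-CRform} using the trace identity
$$
\tr{\ad_\ggo W}+\tr{\theta(W)}=\la A,W\ra,\qquad\forall W\in\ggo,
$$
which follows directly from \eqref{oc}: since $[W,e_i]_\ggo=-\theta(W)^te_i+\theta(e_i)^tW$ and $A=\sum_i\theta(e_i)e_i$, one gets $\tr{\ad_\ggo W}=\sum_i\la[W,e_i]_\ggo,e_i\ra=\sum_i(-\la e_i,\theta(W)e_i\ra+\la W,\theta(e_i)e_i\ra)=-\tr{\theta(W)}+\la A,W\ra$. Feeding $W=\theta(X)Y$ into \eqref{m-CRform} then collapses it to $p((X,0),(0,Y))=-\unm\la A^*-A,\theta(X)Y\ra$, so we may take $Z_1=\unm(A^*-A)$.

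It remains to evaluate $P=\ad{Z}+(\ad{Z})^{t_\omega}$ for $Z=(W_0,0)$ with $W_0:=\unm(A^*-A)$. By \eqref{m-lb}, $\ad{(W_0,0)}$ sends $(Z,V)$ to $([W_0,Z]_\ggo,\theta(W_0)V)$, hence has block-diagonal matrix $\left[\begin{smallmatrix}\ad_\ggo W_0&0\\ 0&\theta(W_0)\end{smallmatrix}\right]$, and a direct computation gives $(\ad{(W_0,0)})^{t_\omega}=-J(\ad{(W_0,0)})^tJ=\left[\begin{smallmatrix}\theta(W_0)^t&0\\ 0&(\ad_\ggo W_0)^t\end{smallmatrix}\right]$. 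Adding these and substituting $W_0=\unm(A^*-A)$ yields $P=\left[\begin{smallmatrix}P&0\\ 0&P^t\end{smallmatrix}\right]$ with $P=\unm\ad_\ggo(A^*-A)+\unm\theta(A^*-A)^t$, as claimed. Every step is routine linear algebra in the fixed model $(\ggo\oplus\ggo,\omega,g,J)$; the only point requiring a genuine observation is the trace identity above, which is exactly where \eqref{oc} — the fact that the Lie bracket of $\ggo$ is encoded in $\theta$ — is used.
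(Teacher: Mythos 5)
Your argument is correct. It differs from the paper's proof mainly in its organization: the paper computes the bilinear form $\la P(X,0),(Y,0)\ra$ directly from \eqref{m-CRform}, applying \eqref{oc} twice inline (first to produce what you isolate as the trace identity $\tr{\ad_\ggo W}+\tr{\theta(W)}=\la A,W\ra$, and then again to rewrite $\theta(X)^t\left(\unm(A^*-A)\right)$ as $\left(\unm\ad_\ggo{(A^*-A)}+\unm\theta(A^*-A)^t\right)X$), and it obtains the lower block $P^t$ from the separate symmetry $P^{t_\omega}=P$ together with the fact that $P$ preserves each summand. You instead route everything through \eqref{PadZ}: you first identify the vector $Z=\left(\unm(A^*-A),0\right)$ — which is precisely the content of Remark \ref{PRZ}, stated in the paper only \emph{after} the lemma — and then read off both blocks from a single matrix computation of $\ad{Z}+(\ad{Z})^{t_\omega}$. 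The two routes use the same computational core (the trace identity coming from \eqref{oc}), but yours makes the block structure and the appearance of $P^t$ in the lower block automatic, at the price of having to justify that $P=\ad{Z}+(\ad{Z})^{t_\omega}$ holds for \emph{any} $Z$ with $p=\omega(Z,\lb)$; your observation that this follows from $d\omega=0$ alone (so the ambiguity of $Z$ in the orthogonal complement of $\theta(\ggo)\ggo$ and of $[\ggo,\ggo]_\ggo$ is harmless) is correct and is the one point that needed to be said explicitly. Your choices $Z_2=0$ and $Z_1=\unm(A^*-A)$ are legitimately verified against all three types of bracket pairs, so there is no gap.
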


\begin{remark}\label{PRZ}
If $Z:=\unm(A^*-A)$, then $P\in\End(\ggo\oplus\ggo)$ satisfies $P=\ad{Z}+(\ad{Z})^{t_\omega}$ (see \eqref{PadZ}) and $P\in\End(\ggo)$ is given by
$$
P=-R(Z),
$$
where $R$ denotes LSA right-multiplication (i.e. $R(X)Y=Y\cdot X$).
\end{remark}

\begin{proof}
It follows from \eqref{m-CRform} and \eqref{oc} that
\begin{align*}
\la P(X,0),(Y,0)\ra =& p((X,0),(0,-Y)) \\
=& \unm\la\theta(X)Y,A^*\ra-\unm\tr{\ad_\ggo{\theta(X)Y}}-\unm\tr{\theta(\theta(X)Y)} \\
=& \unm\la\theta(X)Y,A^*\ra-\unm\tr{\ad_\ggo{\theta(X)Y}} -\unm\sum\la\theta(\theta(X)Y)^te_i,e_i\ra \\
=& \unm\la\theta(X)Y,A^*\ra-\unm\tr{\ad_\ggo{\theta(X)Y}} -\unm\sum\la-[\theta(X)Y,e_i]_\ggo+\theta(e_i)^t\theta(X)Y,e_i\ra \\
=& \unm\la\theta(X)Y,A^*\ra-\unm\tr{\ad_\ggo{\theta(X)Y}}+\unm\tr{\ad_\ggo{\theta(X)Y}}-\unm\la\theta(X)Y,A\ra \\
=&\la\theta(X)Y,\unm(A^*-A)\ra = \la Y,\theta(X)^t\unm(A^*-A)\ra \\
=& \la Y,-[X,\unm(A^*-A)]_\ggo+\theta(\unm(A^*-A))^tX\ra \\
=& \la Y,\left(\unm\ad_\ggo{(A^*-A)}+\unm\theta((A^*-A))^t\right)X\ra,
\end{align*}
which proves formula \eqref{Psymp}.  The formula for $P\in\End(\ggo\oplus\ggo)$ follows from the fact that $P^{t_\omega}=P$, concluding the proof.
\end{proof}

\begin{remark}
It can be proved in much the same way that the Chern-Ricci operator of the hermitian structure $(\ggo\ltimes_{\theta^*}\ggo,\omega,g)$, which is the weak mirror image of $(\ggo\ltimes_\theta\ggo,\omega,g)$ (see Remark \ref{mirror}), is given by $P=\left[\begin{array}{cc} P&0\\ 0&P\end{array}\right]$, where $P=P^t\in\End(\ggo)$ is defined by
$$
\la PX,Y\ra=-\tr{\theta^*(\theta^*(X)Y)} = -\tr{L(X\cdot Y)}.
$$
\end{remark}

In the following computation of the Ricci curvature we are not assuming that $\omega$ is closed (i.e. condition \eqref{oc}).  The Ricci operator $\Ricci$ of $(\ggo\ltimes_\theta\ggo,g)$ can be computed by using for example \cite[Section 2.3]{homRF}, which gives
\begin{align}
\Ricci=& \left[\begin{array}{cc} \Ricci_\ggo-C_\theta-S(\ad_\ggo{H_\theta})&0\\ 0&\unm\sum [\theta(e_i),\theta(e_i)^t]-S(\theta(H)) \end{array}\right], \label{m-ric} \\\notag \\
=& \left[\begin{array}{cc} M_\ggo-\unm B_\ggo-C_\theta-S(\ad_\ggo{H})&0\\ 0&\unm\sum [\theta(e_i),\theta(e_i)^t]-S(\theta(H)) \end{array}\right], \notag
\end{align}
where $\Ricci_\ggo$ is the Ricci operator of $(\ggo,\ip)$, $C_\theta$ is the positive semi-definite operator given by
$$
\la C_\theta X,Y\ra=\tr{S(\theta(X))S(\theta(Y))},
$$
$S(E):=\unm(E+E^t)$ denotes the symmetric part of an operator $E$, $M_\ggo$ is defined by $\tr{M_\ggo E}=-\unc\la\delta_{\lb_\ggo}(E),\lb_\ggo\ra$ (see \eqref{delta}) and $B_\ggo$ is the Killing form of $\ggo$ relative to $\ip$ (i.e. $\tr{\ad_{\ggo}{X}\ad_{\ggo}{Y}}=\la B_\ggo X,Y\ra$).  Here $H\in\ggo$ is defined by $\la H,X\ra=\tr{\ad{X}}$, or equivalently,
$$
H:=H_\ggo+H_\theta, \qquad \la H_\ggo,X\ra=\tr{\ad_\ggo{X}}, \qquad \la H_\theta,X\ra=\tr{\theta(X)}.
$$
Thus the scalar curvature equals
\begin{align}
R=&R_\ggo-\sum\tr{S(\theta(e_i))^2} -\tr{\ad_\ggo{H_\theta}} -\tr{\theta(H)}, \label{scalar}\\
=& -\unc |\lb_\ggo|^2 -\unm\tr{B_{\ggo}} -\sum\tr{S(\theta(e_i))^2} -|H|^2, \notag
\end{align}
where $R_\ggo$ is the scalar curvature of $(\ggo,\ip)$.

Furthermore, the anti-J-invariant component of $\Ricci$ is therefore given by
\begin{equation}\label{m-ricac}
\Ricci^{ac}= \left[\begin{array}{cc} S&0\\ 0&-S\end{array}\right],
\end{equation}
where
$$
S=\unm\Ricci_\ggo-\unm C_\theta -\unm S(\ad_\ggo{H_\theta})
-\unc\sum [\theta(e_i),\theta(e_i)^t]+\unm S(\theta(H)).
$$
It is easy to check that $H=A$ when $\omega$ is closed, from which follows that the Chern scalar curvature is given by
$$
\tr{P}=\la A,A^*\ra-|A|^2,
$$
(recall from Remark \ref{PRZ} that $\tr{P}=2\tr{\ad{Z}}=2\la H,Z\ra=2\la A,\unm(A^*-A)\ra$) and consequently, $\tr{P}=0$ when $\ggo\ltimes_\theta\ggo$ is unimodular.

\subsection{SCF-solitons}\label{exmirror}
We first note that a simple way to obtain a SCF-soliton of the form $(\ggo\ltimes_\theta\ggo,\omega,g)$ is when both $P$ and $S$ are multiples of the identity (see Examples \ref{u(2)} and \ref{gl2nice} for an explicit application).  Indeed, if $P=qI$ and $S=rI$, $q,r\in\RR$, then
$$
\Ricci^{ac}=\left[\begin{array}{cc} rI&0\\ 0&-rI\end{array}\right] = rI + \left[\begin{array}{cc} 0&0\\ 0&-2rI\end{array}\right]\in\RR I + \Der(\ggo\ltimes_\theta\ggo),
$$
and thus the almost-K\"ahler structure $(\ggo\ltimes_\theta\ggo,\omega,g)$ is a (strongly algebraic) SCF-soliton with $c=q+r$ (see \eqref{strongly}).

We have seen in Section \ref{sec-LG} that given a symplectic Lie algebra $(\ggo,\omega)$, the set of all compatible metrics can be identified with the orbit $\Spe(\omega)\cdot\lb$.  In the case $(\ggo\ltimes_\theta\ggo,\omega)$, in order to explore the existence of SCF-solitons, we can vary the LSA structure by
\begin{equation}\label{var}
L_\vp(X):=\vp L(\vp^{-1}X)\vp^{-1}, \quad \left[\begin{matrix} \vp&0\\ 0&\vp^{-1}\end{matrix}\right]\in\Spe(\omega), \quad \vp\in\Gl(\ggo), \quad \vp^t=\vp.
\end{equation}
The corresponding Lie bracket $\lb_\vp$ on $\ggo\oplus\ggo$ defined in \eqref{m-lb} is therefore defined in terms of its components $(\lb_{\vp})_\ggo=\vp[\vp^{-1}\cdot, \vp^{-1}\cdot]_\ggo$ and $\theta_\vp(X)=\vp^{-1}\theta(\vp^{-1}X)\vp$.  Recall that $(\ggo\ltimes_\theta\ggo,\omega)$ and $(\ggo\ltimes_{\theta_\vp}\ggo,\omega)$ are isomorphic as symplectic Lie algebras (see Proposition \ref{LSA-iso}) and that if in addition $\vp\in\Or(\ggo,\ip)$ (i.e. $\vp^2=I$), then the almost-K\"ahler structures $(\ggo\ltimes_\theta\ggo,\omega,g)$ and $(\ggo\ltimes_{\theta_\vp}\ggo,\omega,g)$ are equivalent (see Proposition \ref{LSA-equiv}).

\begin{example}\label{u(2)}
We consider the Lie algebra $\ggo=\ug(2)$ with (orthonormal) basis
$$
e_1=\left[\begin{smallmatrix} i&0\\ 0&i\end{smallmatrix}\right], \quad e_2=\left[\begin{smallmatrix} 0&-1\\ 1&0\end{smallmatrix}\right], \quad e_3=\left[\begin{smallmatrix} i&0\\ 0&-i\end{smallmatrix}\right], \quad e_4=\left[\begin{smallmatrix} 0&i\\ i&0\end{smallmatrix}\right],
$$
and Lie bracket
$$
[e_2,e_3]=2e_4, \quad [e_2,e_4]=-2e_3, \quad [e_3,e_4]=2e_2.
$$
If we identify $\ggo$ with the quaternion numbers $\HH$ via $\{e_1=1,e_2=i,e_3=j,e_4=k\}$, then the (associative) product on $\HH$ is an LSA structure defining the above Lie bracket.  By considering the variation
$$
\vp_t=\left[\begin{smallmatrix} t&&&\\ &1&& \\ &&1&\\ &&&1\end{smallmatrix}\right], \qquad t>0,
$$
we obtain the following one-parameter family of LSA structures:
$$
L_t(e_1)=\tfrac{1}{t}I, \quad L_t(e_2)=\left[\begin{smallmatrix} 0&-t&&\\ 1/t&0&& \\ &&0&-1\\ &&1&0\end{smallmatrix}\right],
\quad L_t(e_3)=\left[\begin{smallmatrix} &&-t&0\\ &&0&1 \\ 1/t&0&&\\ 0&-1&&\end{smallmatrix}\right],
\quad L_t(e_4)=\left[\begin{smallmatrix} &&0&-t\\ &&-1&0 \\ 0&1&&\\ 1/t&0&&\end{smallmatrix}\right],
$$
which define the same Lie bracket as above.  The Chern-Ricci operator of the corresponding almost-K\"ahler structure $(\ggo\ltimes_{\theta_t}\ggo,\omega,g)$ is given, according to \eqref{Psymp}, by
$$
P_t=\tfrac{-5+3t^2}{2t^2}I, \qquad \mbox{i.e.}\quad p_t=\tfrac{-5+3t^2}{2t^2}\omega,
$$
as it is easy to see that $A=-\tfrac{4}{t}e_1$ and $A^*=(\tfrac{1}{t}-3t)e_1$.  It is also straightforward to obtain that
$$
\Ricci_{\ggo_t}=\left[\begin{smallmatrix} 0&&&\\ &2&& \\ &&2&\\ &&&2\end{smallmatrix}\right], \qquad C_{\theta_t}=\Diag\left(\tfrac{4}{t^2}, \tfrac{(1-t^2)^2}{2t^2}, \tfrac{(1-t^2)^2}{2t^2}, \tfrac{(1-t^2)^2}{2t^2}\right),
$$
$H_t=H_{\theta_t}=-\tfrac{4}{t}e_1$, $S(\theta_t(H_t))=\tfrac{4}{t^2}I$, and
$$
\sum [\theta_t(e_i),\theta_t(e_i)^t]=\Diag\left(\tfrac{3(1-t^4)}{t^2}, \tfrac{1-t^4}{t^2}, -\tfrac{1-t^4}{t^2}, -\tfrac{1-t^4}{t^2}\right).
$$
We now use formula \eqref{m-ric} to get
$$
\Ricci_t=\Diag\left(-\tfrac{4}{t^2},\tfrac{-1+6t^2-t^4}{2t^2},\tfrac{-1+6t^2-t^4}{2t^2}, \tfrac{-1+6t^2-t^4}{2t^2}, -\tfrac{5+3t^4}{2t^2}, \tfrac{-9+t^4}{2t^2},\tfrac{-9+t^4}{2t^2},\tfrac{-9+t^4}{2t^2}\right).
$$

\begin{remark}
It is worth pointing out that $(\ggo\ltimes_{\theta_t}\ggo,g)$ has negative Ricci curvature (i.e. $\Ricci_t<0$) if and only if $t^2<3-\sqrt{8}$.
\end{remark}

The anti-J-invariant part  of $\Ricci_t$ (see \eqref{m-ricac}) is therefore given by
$$
\Ricci_t^{ac}= \left[\begin{array}{cc} S_t&0\\ 0&-S_t\end{array}\right], \qquad S_t=\Diag\left(\tfrac{-3+3t^4}{4t^2},\tfrac{4+3t^2-t^4}{2t^2},  \tfrac{4+3t^2-t^4}{2t^2}, \tfrac{4+3t^2-t^4}{2t^2}\right).
$$
Thus $S_t$ is a multiple of the identity if and only if $t^2=\frac{11}{5}$.  More precisely, for $t_0=\sqrt{\tfrac{11}{5}}$, we obtain that
$$
P_{t_0}=\tfrac{4}{11}I, \qquad \Ricci_{t_0}^{ac}=\left[\begin{array}{cc} \frac{72}{55}I&0\\ 0&-\frac{72}{55}I\end{array}\right] = \tfrac{72}{55}I + \left[\begin{array}{cc} 0&0\\ 0&-\frac{144}{55}I\end{array}\right]\in\RR I + \Der(\ggo\ltimes_{\theta_{t_0}}\ggo).
$$
This implies that the almost-K\"ahler structure $(\ggo\ltimes_{\theta_{t_0}}\ggo,\omega,g)$ is a (strongly algebraic) SCF-soliton with $c=\tfrac{92}{55}>0$, that is, a shrinking SCF-soliton.  We note that this structure is not K\"ahler ($\Ricci_t^{ac}\ne 0$), the Ricci operator is given by
$$
\Ricci_{t_0}=\tfrac{1}{55}\Diag(-100,92,92,92,-244,-52,-52,-52)
$$
and the scalar curvature equals $R_{t_0}=-\tfrac{224}{55}$.  A family of SCF-solutions containing this soliton is studied in Example \ref{BF-exa}.

\begin{remark}
By using a standard computational program, we found out that this SCF-soliton is the only one (up to isometry) satisfying $S=rI$ among all variations of the form $\vp=\Diag(a,b,c,d)$.
\end{remark}
\end{example}

\begin{example}\label{gl2nice}
The usual matrix multiplication on $\ggo=\glg_2(\RR)$ gives rise to an LSA structure defining the usual Lie bracket, which in the (orthonormal) basis
$$
e_1=\left[\begin{smallmatrix} 1&0\\ 0&1\end{smallmatrix}\right], \quad e_2=\left[\begin{smallmatrix} 0&-1\\ 1&0\end{smallmatrix}\right], \quad e_3=\left[\begin{smallmatrix} 1&0\\ 0&-1\end{smallmatrix}\right], \quad e_4=\left[\begin{smallmatrix} 0&1\\ 1&0\end{smallmatrix}\right],
$$
is given by
$$
[e_2,e_3]=2e_4, \quad [e_2,e_4]=-2e_3, \quad [e_3,e_4]=-2e_2.
$$
If we consider the variation
$$
\vp_{s,t}=\left[\begin{smallmatrix} s&&&\\ &t&& \\ &&1&\\ &&&1\end{smallmatrix}\right], \qquad s,t>0,
$$
then the corresponding two-parameter family of LSA structures is defined by
$$
\begin{array}{lcl}
L_{s,t}(e_1)=\tfrac{1}{s}I, &\qquad& L_{s,t}(e_2)=\left[\begin{smallmatrix} 0&-s/t^2&&\\ 1/s&0&& \\ &&0&-1/t\\ &&1/t&0\end{smallmatrix}\right], \\ \\
L_{s,t}(e_3)=\left[\begin{smallmatrix} &&s&0\\ &&0&-t \\ 1/s&0&&\\ 0&-1/t&&\end{smallmatrix}\right], &\qquad&
L_{s,t}(e_4)=\left[\begin{smallmatrix} &&0&s\\ &&t&0 \\ 0&1/t&&\\ 1/s&0&&\end{smallmatrix}\right],
\end{array}
$$
and the Lie bracket on $\ggo$ changes to
$$
[e_2,e_3]_{s,t}=\tfrac{2}{t}e_4, \quad [e_2,e_4]_{s,t}=-\tfrac{2}{t}e_3, \quad [e_3,e_4]_{s,t}=-2te_2.
$$
By a straightforward computation one obtains that
$$
P_{s,t}=\left(-\tfrac{5}{2s^2}+\tfrac{1}{2t^2}-1\right)I, \quad S_{s,t}=\left[\begin{smallmatrix} -\tfrac{3}{4s^2}+\tfrac{s^2}{4t^4}+\tfrac{s^2}{2}&&&\\ &\tfrac{3t^2}{2}-\tfrac{s^2}{2t^4}+\tfrac{2}{s^2}&& \\ &&-\tfrac{3t^2}{2}-\tfrac{s^2}{2}+\tfrac{2}{s^2}-3&\\ &&&-\tfrac{3t^2}{2}-\tfrac{s^2}{2}+\tfrac{2}{s^2}-3\end{smallmatrix}\right].
$$
It follows that $S_{s,t}$ is a multiple of the identity if and only if
$$
s^2=\frac{6t^4}{1-t^2}, \qquad f(t):=-108t^8+36t^6-97t^4-22t^2+11=0,
$$
and since $f(0)=11$ and $f(1)=-180$, there exists $t_0\in(0,1)$ such that $f(t_0)=0$ ($t_0\sim 0.49$).  By setting $s_0:=\sqrt{\frac{6t_0^4}{1-t_0^2}}$ ($\sim 0.68$), we obtain the expanding (strongly algebraic) SCF-soliton $(\ggo\ltimes_{\theta_{s_0,t_0}}\ggo,\omega,g)$ with $c\sim -3.61$, $P=qI$ ($q\sim -4.24$) and $S=rI$ ($r\sim 0.63$).  We note that this SCF-soliton has negative Ricci curvature:
$$
\Ricci_{s_0,t_0}\sim \Diag(-8.46,-0.43,-9.95,-9.95,-9.73,-1.70,-11.21,-11.21).
$$
\end{example}

\subsection{Bracket flow}\label{LSA-BF}
In this section, in order to study the SCF-evolution of almost-K\"ahler structures of the form $(\ggo\ltimes_\theta\ggo,\omega,g)$, we consider the bracket flow \eqref{intro2} and use Theorem \ref{BF-thm}.  According to \eqref{m-lb}, the Lie bracket of $\ggo\ltimes_\theta\ggo$ is determined by $\lambda:=\lb_{\ggo}$ and $\theta$ and so any bracket flow solution $\mu=\mu(t)$ will be given by a pair
$$
\mu(t)=(\lambda(t),\theta(t)).
$$
By using that
$$
P+\Ricac=\left[\begin{array}{cc} P+S&0\\ 0&P^t-S\end{array}\right],
$$
it is easy to see that the bracket flow equation $\mu'=\delta_\mu(P+\Ricac)$ is equivalent to the system
$$
\left\{\begin{array}{l}
\lambda'=\delta_\lambda(P+S), \\ \\
\theta'(X)=\theta((P+S)X) +[\theta(X),P^t-S], \qquad\forall X\in\ggo.
\end{array}\right.
$$
It follows from Theorem \ref{BF-thm} that $\omega$ remains closed relative to $\mu(t)$, that is,
$$
\lambda(X,Y)=-\theta(X)^tY+\theta(Y)^tX, \qquad \forall t,
$$
from which follows that the bracket flow is equivalent to the single equation for $\theta$ given by
\begin{equation}\label{BF-LSA}
\theta'(X)=\theta((P+S)X) +[\theta(X),P^t-S], \qquad\forall X\in\ggo,
\end{equation}
where $\lambda$ is defined in terms of $\theta$ as above (recall that $P$ and $S$ depend on $\theta$ and $\lambda$).  Indeed, if $Q_1:=P+S$ and $Q_2:=P^t-S$, then $\lambda$ evolves by

\begin{align*}
\lambda'(X,Y) =& -\theta'(X)^t(Y)+\theta'(Y)^t(X) \\
=& -\theta(Q_1X)^tY - [Q_2^t,\theta(X)^t]Y +\theta(Q_1Y)^tX + [Q_2^t,\theta(Y)^t]X \\
=& \lambda(Q_1X,Y)+\lambda(X,Q_1Y)-Q_1\lambda(X,Y) \\
& -\theta(Y)^t(Q_1+Q_2^t)X + \theta(X)^t(Q_1+Q_2^t)Y + (Q_1+Q_2^t)\lambda(X,Y),
\end{align*}

and since $Q_1+Q_2^t=2P$ and $P=-R(Z)$ (see Remark \ref{PRZ}), the LSA condition yields

\begin{align*}
\lambda'(X,Y) =& \delta_\lambda(Q_1)(X,Y) + 2\left(Y\cdot PX-X\cdot PY+P(X\cdot Y-Y\cdot X)\right) \\
=& \delta_\lambda(Q_1)(X,Y) + 2\Big(-Y\cdot (X\cdot Z) +X\cdot (Y\cdot Z) -(X\cdot Y)\cdot Z +(Y\cdot X)\cdot Z\Big) \\
=& \delta_\lambda(Q_1)(X,Y).
\end{align*}

\begin{example}\label{BF-exa}
For $\ggo=\ug(2)$ as in Example \ref{u(2)}, consider the two-parameter family of almost-K\"ahler structures $(\ggo\ltimes_{\theta_{a,b}}\ggo,\omega,g)$, where
$$
\begin{array}{lcl}
\theta_{a,b}(e_1)=aI, &\qquad& \theta_{a,b}(e_2)=\left[\begin{smallmatrix} 0&a&&\\ -b^2/a&0&& \\ &&0&-b\\ &&b&0\end{smallmatrix}\right], \\ \\
\theta_{a,b}(e_3)=\left[\begin{smallmatrix} &&a&0\\ &&0&b \\ -b^2/a&0&&\\ 0&-b&&\end{smallmatrix}\right], &\qquad&
\theta_{a,b}(e_4)=\left[\begin{smallmatrix} &&0&a\\ &&-b&0 \\ 0&b&&\\ -b^2/a&0&&\end{smallmatrix}\right],
\end{array}
$$
and so the corresponding Lie bracket on $\ug(2)$ is given by
$$
\lambda_{a,b}(e_2,e_3)=2be_4, \qquad \lambda_{a,b}(e_2,e_4)=-2be_3, \qquad \lambda_{a,b}(e_3,e_4)=2be_2.
$$
We note that this family corresponds to the variation $\vp=\Diag(-1/a,1/b,1/b,1/b)$.

If we denote by $\Theta_{a,b}(X)$ the right-hand side of bracket flow equation \eqref{BF-LSA}, then it is easy to compute that
$$
\begin{array}{lcl}
\Theta_{a,b}(e_1)=\alpha I, &\qquad& \Theta_{a,b}(e_2)=\left[\begin{smallmatrix} 0&\alpha&&\\ \gamma&0&& \\ &&0&-\beta\\ &&\beta&0\end{smallmatrix}\right], \\ \\
\Theta_{a,b}(e_3)=\left[\begin{smallmatrix} &&\alpha&0\\ &&0&\beta \\ \gamma&0&&\\ 0&-\beta&&\end{smallmatrix}\right], &\qquad&
\Theta_{a,b}(e_4)=\left[\begin{smallmatrix} &&0&\alpha\\ &&-\beta&0 \\ 0&\beta&&\\ \gamma&0&&\end{smallmatrix}\right],
\end{array}
$$
where
$$
\alpha:=-\tfrac{13}{4}a^3+\tfrac{3}{2}ab^2+\tfrac{3}{4}b^4/a, \quad \beta:=-\unm a^2b+3b^3-\unm b^5/a^2, \quad \gamma:=-\tfrac{9}{4}ab^2-\tfrac{9}{2}b^4/a+\tfrac{7}{4}b^6/a^3.
$$
This implies that the family is invariant under the bracket flow if and only if $(-b^2/a)'=\gamma$ follows from $a'=\alpha$ and $b'=\beta$, which can be checked in a straightforward way.  The bracket flow on the family of almost-K\"ahler structures $(\ggo\ltimes_{\theta_{a,b}}\ggo,\omega,g)$ therefore becomes the following ODE system for $a=a(t)$, $b=b(t)$:
\begin{equation}\label{BF-exa-eq}
\left\{\begin{array}{l}
a'= -\tfrac{13}{4}a^3+\tfrac{3}{2}ab^2+\tfrac{3}{4}b^4/a, \\ \\
b'= -\unm a^2b+3b^3-\unm b^5/a^2.
\end{array}\right.
\end{equation}
We can assume, up to equivalence, that $a,b>0$.  Note that the shrinking SCF-soliton found in Example \ref{u(2)} belongs to the family; namely, it is contained in the straight line $b=\sqrt{\tfrac{11}{5}}a$, on which the equation becomes $a'=c a^3$ for $c=\tfrac{92}{55}$.  By a standard qualitative analysis, one can obtain the following information on these SCF-solutions:

\begin{itemize}
\item They all develop a finite-time singularity ($T_+<\infty$) and converge asymptotically to the SCF-soliton solution $\left(a(t),\sqrt{\tfrac{11}{5}}a(t)\right)$, $a(t)=(-2ct+1)^{-1/2}$, $t\in(-\infty,\tfrac{1}{2c})$.

\item They are all {\it ancient} solutions (i.e. $T_-=-\infty$).

\item For the solutions above the soliton (i.e. $b>\sqrt{\tfrac{11}{5}}a$), we have that the Chern scalar curvature $\tr{P}=(-20a^2+12b^2)$ is always positive, it comes from $+\infty$, attains a global minimum and then goes to $+\infty$, as $t\to T_+$.  On the other hand, the solutions below the soliton always increase $\tr{P}$ from $-\infty$ toward $+\infty$.

\item The scalar curvature $R=\tfrac{-43a^4+18a^2b^2-3b^4}{2a^2}$ is always negative and goes from $-\infty$ to $-\infty$, reaching a global maximum for any solution.
\end{itemize}

We now analyze the convergence behavior.  It is easy to see that
$$
\lim\limits_{t\to T_+} \tfrac{4}{\sqrt{11}}\frac{(a,b)}{\sqrt{a^2+b^2}} = \left(\sqrt{\tfrac{5}{11}},1\right),
$$
and thus pointed convergence of a subsequence $(G_{a_0,b_0},c_k\omega(t_k),c_kg(t_k))$ toward the SCF-soliton $(G_{\sqrt{\tfrac{5}{11}},1},\omega,g)$ follows for some $c_k>0$ (see Corollary \ref{conv}), for any starting almost-K\"ahler structure $(G_{a_0,b_0},\omega,g)$, $a_0,b_0>0$.

Concerning backward convergence, we have that if $b<\sqrt{\tfrac{11}{5}}a$, then
$$
\lim\limits_{t\to-\infty} \frac{(a,b)}{\sqrt{a^2+b^2}} = (1,0).
$$
It is easy to see that $\ggo\ltimes_{\theta_{1,0}}\ggo$ is a solvable Lie algebra with nilradical isomorphic to $\hg_7$, the $7$-dimensional Heisenberg algebra.  Moreover, $(\ggo\ltimes_{\theta_{1,0}}\ggo,\omega,g)$ is an expanding SCF-soliton with
$$
P_{1,0}=-\tfrac{5}{2}I, \qquad \Ricac_{1,0}=-\tfrac{3}{4}I+D,
$$
where $D:=\unc\Diag(0,11,11,11,6,-5,-5,-5)\in\Der(\ggo\ltimes_{\theta_{1,0}}\ggo)$, and negative Ricci curvature
$$
\Ricci_{1,0}=\unm\Diag(-8,-1,-1,-1,-5,-9,-9,-9).
$$
On the other hand, for $b>\sqrt{\tfrac{11}{5}}a$ we obtain,
$$
\lim\limits_{t\to-\infty} \frac{(a,b)}{b^2/a} = (0,1),
$$
and hence $\tfrac{a}{b^2}\theta_{a,b}\to\theta_\infty$, as $t\to-\infty$, where the only nonzero coefficients of $\theta_\infty$ are $\theta_\infty(e_2)e_1=-e_2$,  $\theta_\infty(e_3)e_1=-e_3$ and  $\theta_\infty(e_4)e_1=-e_4$.  This implies that $\ggo\ltimes_{\theta_\infty}\ggo$ is a nilpotent Lie algebra and  $(\ggo\ltimes_{\theta_\infty}\ggo,\omega,g)$ is an expanding SCF-soliton, which is equivalent to $(G_{\mu_A},\omega,g)$ as in Example \ref{ex-Anilp} with $C=I$.
\end{example}

\end{document}